\DeclareMathOperator{\Supp}{Supp}
\DeclareMathOperator{\Proj}{Proj}
\DeclareMathOperator{\Spec}{Spec}
\DeclareMathOperator{\lct}{lct}
\DeclareMathOperator{\chara}{char}
\newcommand{\tbf}{\textbf}
\numberwithin{equation}{section} 
\theoremstyle{plain}
\newtheorem{theorem}{Theorem}[section]
\newtheorem{lemma}[theorem]{Lemma}
\newtheorem{proposition}[theorem]{Proposition}
\newtheorem{corollary}[theorem]{Corollary}
\newtheorem{conjecture}[theorem]{Conjecture}
\newtheorem{definition}{Definition}[section]
\newtheorem{remark}{Remark}[section]
\title{Boundedness of complements for fibered Fano threefolds in positive characteristic}
\author{Xintong Jiang}
\address{Tsinghua University}
\email{xt-jiang21@mails.tsinghua.edu.cn}
\date{}
\begin{document}
\begin{abstract}
    In this paper, we prove the canonical bundle formula for Fano type fibrations and Shokurov's conjecture on boundedness of complements for Fano type threefold pairs $(X,B)$ with fibration structures in large characteristics. In particular, we prove the conjecture when $-(K_X+B)\not\equiv 0$ is nef and not big in large characteristics.
\end{abstract}
\maketitle
\markboth{Xintong Jiang}{Boundedness of complements for fibered Fano threefolds in positive characteristic}
\tableofcontents
\section{Introduction}

We work over an algebraically closed field $k$ of characteristic $p>5$ through this paper. In birational geometry, a central question is whether a  given class of varieties is bounded. Proving that a class $\mathcal P$ of varieties is bounded allows for a more explicit construction and study of its moduli space, yielding results that facilitate further classification. In characteristic zero, many important boundedness results are known. The boundedness of varieties of general type is proved in \cite{hacon2012birationalautomorphismsvarietiesgeneral}\cite{HMXboundednessofgeneraltype}, the boundedness of Fano varieties with mild singularities is established by a sequence of work \cite{birkar2019antipluricanonicalsystemsfanovarieties}\cite{birkar2020singularitieslinearsystemsboundedness}\cite{birkarfanofib}. For Calabi-Yau varieties, the problem is hard since the lack of a canonical polarization. However, there are still some satisfactory results in this direction \cite{birkarpolarisedvarieties}\cite{JJZboundedpolarizedLCYfib}. In positive characteristic, very few results on boundedness are known even for 3-dimensional varieties.  The difficulty essentially comes from the failure of vanishing theorems and the lack of understanding of very special morphisms that appear only in positive characteristic like inseparable morphisms. However, for Fano varieties in positive characteristic, especially in large characteristics, many pathologies might be controlled which enables us to study them more easily. In particular, the following famous BAB conjecture is widely supposed to be true in large characteristics:
\begin{conjecture}[BAB Conjecture]
\label{BAB}
    Let $d$ be a natural number and $\epsilon>0$ be a real number, then the set of $\epsilon$-lc Fano varieties of dimension $d$ forms a bounded family.
\end{conjecture}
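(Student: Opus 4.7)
The plan is to adapt Birkar's proof of BAB from characteristic zero, which proceeds through three stages: boundedness of $n$-complements, effective birationality of $|-mK_X|$ for some bounded $m$, and an upper bound on the anti-canonical volume $(-K_X)^d$. Since $p>5$, MMP for threefolds and a usable Kawamata--Viehweg vanishing for surfaces are available, so a direct adaptation in dimension three is plausible once the complement theory developed in this paper is in hand; in higher dimensions additional foundational MMP work is still missing.

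First I would use boundedness of $n$-complements to bound the Cartier index of $K_X$. For an $\epsilon$-lc Fano $X$ one runs a $(K_X+\Delta)$-MMP with a suitable auxiliary boundary $\Delta$ to reduce to either the fibered case (handled by this paper) or to a terminal Fano model, and an inductive argument then produces a uniform $n$ and a $\mathbb Q$-divisor $B^+\sim_{\mathbb Q}-K_X$ with $nB^+$ integral and $(X,B^+)$ log canonical. The fibered case is where the present paper's Shokurov-type complement results feed in directly.

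Next I would establish effective birationality: there exists $m=m(d,\epsilon)$ such that $|-mK_X|$ defines a birational map. In characteristic zero this is proved by creating non-klt centres through the non-vanishing theorem and lifting sections with Kawamata--Viehweg vanishing. In positive characteristic these tools must be replaced by Frobenius-splitting and asymptotic vanishing techniques \emph{\`a la} Hacon--Witaszek and Xu. Combined with an upper bound on $\mathrm{vol}(-K_X)$, obtained by running MMP on $X$ and inducting on dimension, the classical Matsusaka--Koll\'ar argument then exhibits the $\epsilon$-lc Fanos inside a bounded family of projective varieties; a further step using the complement $B^+$ shows this bounded family can be chosen to lie inside the locus of $\epsilon$-lc Fanos.

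The principal obstacle is the failure of Kawamata--Viehweg vanishing in positive characteristic, which affects both the effective birationality step and the production of complements. In dimension three and large characteristic many instances of vanishing survive, but the inductive structure through fibrations---which is precisely the focus of this paper---is essential to circumvent the remaining failures; this is why the conjecture remains open even for threefolds, and why reducing the general Fano problem to the fibered Fano problem is the natural first target.
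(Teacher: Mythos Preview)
The statement you are attempting to prove is labeled a \emph{Conjecture} in the paper, and the paper offers no proof of it. On the contrary, the paper explicitly remarks that in dimension~$3$ and positive characteristic ``the conjecture is widely open with very few limited results.'' The paper's contribution is not a proof of BAB but a proof of boundedness of complements for \emph{fibered} Fano threefold pairs (Theorems~\ref{fibcomp1} and~\ref{Main}); this is one ingredient that would feed into a hypothetical BAB-type argument, not BAB itself.

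Your proposal is therefore not comparable to any proof in the paper. What you have written is a reasonable high-level sketch of Birkar's characteristic-zero strategy, together with an honest acknowledgment of the obstacles in positive characteristic, and you yourself conclude that ``the conjecture remains open even for threefolds.'' That final sentence is the correct assessment: your outline is a research program, not a proof. In particular, the non-fibered (big) case of complements for threefolds in positive characteristic is not established in this paper, effective birationality of $|{-}mK_X|$ is not available, and the vanishing and section-lifting steps you allude to are genuinely open. So there is no gap to name in the sense of a flawed argument; rather, the proposal does not constitute a proof at all, and neither does the paper claim one.
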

The condition $\epsilon$-lc cannot be strengthened into klt, cf. \cite[1.2]{birkar2020singularitieslinearsystemsboundedness}. This conjecture was proved in characteristic 0 by Birkar in   \cite{birkar2019antipluricanonicalsystemsfanovarieties}\cite{birkar2020singularitieslinearsystemsboundedness} using Shokurov's theory of complements. In positive characteristic, the conjecture was proved in dimension 2 by Alexeev \cite{boundednessandk2} and for toric varieties by the Borisov brothers \cite{Borisov1993SINGULARTF}. In dimension 3, the conjecture is  widely open with very few limited results, see  \cite{sato2024boundednessweakfanothreefolds}\cite{Das_2019} and etc.

According to the methods in  \cite{birkar2019antipluricanonicalsystemsfanovarieties}\cite{birkar2020singularitieslinearsystemsboundedness}, a proposition closely related to the BAB conjecture is the boundedness of complements on Fano type varieties:
\begin{conjecture}[Boundedness of complements, Shokurov]
\label{cplt}
    Let $d,p\in\mathbb N$ be natural numbers and $R\subset[0,1]$ be a finite set of rational numbers, then there exists a constant $n$ only depending on $d,p$ and $R$ such that if
    \begin{enumerate}
        \item $(X',B'+M')$ is a projective lc generalized pair of dimension $d$,
        \item $X\to Z$ is a contraction,
        \item $B'\in \Phi(R)$, $pM$ b-Cartier,
        \item $X'$ Fano type over $Z$, and
        \item $-(K_{X'}+B'+M')$ is nef over $Z$.
        
    \end{enumerate}Then $K_{X'}+B'+M'$ has an $n$-complement $K_{X'}+B^{\prime+} +M'$ with $B^{\prime+}\geq B'$ over any point $z\in Z$.
\end{conjecture}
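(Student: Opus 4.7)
The plan is to follow Birkar's strategy for bounding complements, adapted to positive characteristic, and restricted (as the paper's title suggests) to the threefold fibered case $d=3$ with a nontrivial contraction. After passing to a small $\mathbb{Q}$-factorial dlt modification of $(X',B'+M')$ and running a $(K_{X'}+B'+M')$-MMP over $Z$ — using the threefold MMP in characteristic $p>5$ developed by Hacon--Xu, Birkar--Waldron, and others — one may assume $X'$ is $\mathbb{Q}$-factorial dlt and admits a Mori fiber space structure $\varphi\colon X'\to T$ over $Z$. When $\varphi$ is birational, the problem reduces to the base $T$; the essential new case is $\dim T<\dim X'$.

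I would then split by the relative dimension of $\varphi$. If $\dim T=2$, then $\varphi$ is a conic bundle, and because $p>5$ one avoids the pathologies of wild conic bundles in characteristics $2,3$. If $\dim T=1$, the general fiber is a del Pezzo surface. In both cases the relevant canonical bundle formula for generalized pairs produces a structure $K_{X'}+B'+M'\sim_{\mathbb{Q}}\varphi^{*}(K_T+B_T+M_T)$ such that $(T,B_T+M_T)$ is a generalized lc pair with $B_T\in\Phi(R_T)$ for a finite $R_T$ depending only on $R,p$, and the b-Cartier index of $M_T$ is controlled by $p$ and $R$. The global case $\dim T=0$ (the BAB side of the picture) is explicitly \emph{not} covered.

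The second move is induction on dimension: apply the lower-dimensional instance of Conjecture~\ref{cplt} to the generalized pair $(T,B_T+M_T)$ to obtain an $n_T$-complement with $n_T$ depending only on $\dim T$, $p$, and $R_T$. For $\dim T=1$ this is essentially elementary, while for $\dim T=2$ one uses the surface theory in positive characteristic (Alexeev-type boundedness together with the surface version of complements, which in large characteristic behaves much as in characteristic zero). One then lifts the complement through $\varphi$ by pulling back from $T$ and correcting along the non-klt locus of $(X',B')$ via divisorial adjunction and inversion of adjunction, together with an effective base-point-free/non-vanishing statement on the Fano-type threefold $X'$ over $Z$.

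The main obstacle I expect is the lifting step. In characteristic $0$ one lifts sections from $T$ (or from components of $\lfloor B'\rfloor$) using Kawamata--Viehweg vanishing; here vanishing can fail, so one must substitute either globally $+$-regular / $F$-regular techniques in the spirit of recent work of Bhatt--Ma--Patakfalvi--Schwede--Tucker--Waldron--Witaszek--Zhuang, or a geometric argument that exploits the fact that $X'$ is Fano type over $Z$ to control $H^1$ torsion directly. Intertwined with this is the more bookkeeping-heavy but still delicate obstruction of showing that the moduli divisor $M_T$ produced by the canonical bundle formula has b-Cartier index dividing a bounded integer depending only on $d,p,R$; without such control the inductive hypothesis does not apply, and this is where the hypothesis ``large characteristic'' seems essential.
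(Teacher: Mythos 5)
The statement you set out to prove is stated in the paper as a \emph{conjecture} (Shokurov's boundedness of complements) and the paper does not prove it; it is even used as an inductive hypothesis in lower dimension (condition (6) of Lemma \ref{effcbf}). What the paper actually establishes are special cases: Theorems \ref{fibcomp} and \ref{fibcomp2} treat threefold pairs with $M'=0$, in large characteristic, admitting a fibration structure (equivalently $-(K_X+B)$ nef and not big), plus the relative statements \ref{relacomp1} and \ref{relacomp2}. Your proposal, by restricting to $d=3$ with a nontrivial Mori fiber space and explicitly excluding the case $\dim T=0$, silently discards exactly the parts of Conjecture \ref{cplt} that are open and hardest: arbitrary dimension, a genuinely nonzero moduli part $M'$ (the paper only remarks in \ref{fibcomp} that the generalized case would follow from a ``good theory of generalized pairs'' in positive characteristic, which is not available), and the big/global case where BAB-type boundedness and exceptional pairs enter. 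So even if every step you sketch were filled in, you would have proved roughly the paper's main theorems, not the statement in question.

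Within the fibered case, the two points you defer as ``obstacles'' are precisely the mathematical content that has to be supplied, and the paper supplies them by specific mechanisms you do not reproduce. The lifting of complements is not done via $F$-regularity or $+$-regularity; the paper first proves boundedness of \emph{relative} complements (Theorem \ref{relacomp1}) by reducing to pl-contractions of plt pairs and invoking the relative vanishing \ref{van} (Bernasconi, Bernasconi--Koll\'ar GR vanishing in characteristic $>5$), and the global lifting in Theorem \ref{fibcomp} is then done by pulling back an $n$-complement from the base through the \emph{effective} canonical bundle formula. Controlling the coefficients $B_T\in\Phi(S)$ and the b-Cartier index of $M_T$ is Lemma \ref{effcbf}, which itself needs the canonical bundle formula in positive characteristic (Theorem \ref{cbf1}, proved via generic normality of bounded families, Lemma \ref{genorm}, and large $p$) and the fact that the base is again of Fano type (Corollary \ref{contr}); your sketch assumes such a formula ``produces a structure'' with controlled index, which is exactly what fails naively in characteristic $p$ and is where the hypothesis $p\gg 0$ is consumed. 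Acknowledging these as expected difficulties does not close them, so the proposal has genuine gaps at the load-bearing steps, in addition to not covering the full statement.
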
 

The conjecture is known for $\dim X=2$ in all characteristic since \Cref{BAB} is known in full generality \cite{boundednessandk2}. In characteristic $0$, the conjecture was proved by induction on dimensions, separated into two cases, the exceptional case and the non-exceptional case. In the non-exceptional case, one of the main step is to construct complements from a fibration structure using induction hypothesis. 

In this paper, we will prove the following statement which predicts that the same process would work in large characteristic.
\begin{theorem}[\Cref{fibcomp}]
\label{fibcomp1}
    Let $R\subset [0,1]$ be a finite set of rational numbers, there is an $n\in\mathbb N$ and a prime number $p_0$ depending only on $R$ such that if $(X,B)$ is a projective lc pair defined over an algebraically closed field $k$ satisfying the following conditions:
    \begin{enumerate}
        \item $\chara k=p>p_0$,
        \item $-(K_X+B)$ is nef,
        \item $X$ is of Fano type,
        \item $B\in \Phi(R)$ and
        \item there is a contraction $f:(X,B)\to V$ such that $(K_X+B)\sim_{\mathbb Q} 0/V$ with $3>\dim V>0$,
    \end{enumerate}
    then there is an $n$-complement $(X,B^+)$ for $(X,B)$ with $B^+\geq B$.
\end{theorem}
A direct corollary is the following theorem, the result hopefully holds for generalized pairs.
\begin{theorem}[\Cref{fibcomp2}]
\label{Main}

    Let $R\subset [0,1]$ be a finite set of rational numbers. Then there is a constant $n$ and a prime number $p_0$ depending only on $R$, such that if $(X,B)$ is a $3$-dimensional projective lc pair defined over an algebraically closed field $k$ satisfying the following conditions:
    \begin{enumerate}
        \item $\chara k=p>p_0$,
        \item $B\in \Phi(R)$,
        \item $X$ is Fano type and
        \item $-(K_X+B)\not\equiv0$ is nef but not big,
    \end{enumerate}
    then there is an $n$-complement $(X,B^+)$ with $B^+\geq B$.
\end{theorem}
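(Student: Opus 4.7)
The plan is to deduce this statement directly from Theorem~\ref{fibcomp1} by producing, from the hypothesis that $-(K_X+B)$ is nef but not big and not numerically zero, a contraction $f\colon X\to V$ with $0<\dim V<3$ along which $K_X+B$ is $\mathbb{Q}$-trivial. Once such an $f$ is in hand, all five hypotheses of Theorem~\ref{fibcomp1} are satisfied and the conclusion is immediate.

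The key input is the semi-ampleness of nef $\mathbb{Q}$-Cartier divisors on Fano-type threefolds in characteristic $p>5$, which follows from the base-point-free theorem in this setting (e.g.\ the work of Hacon--Xu, Cascini--Tanaka--Xu, and Birkar--Waldron). Applying it to $-(K_X+B)$ yields a contraction $f\colon X\to V$ with connected fibers and an ample $\mathbb{Q}$-divisor $A$ on $V$ satisfying $-(K_X+B)\sim_{\mathbb{Q}}f^{*}A$; in particular $K_X+B\sim_{\mathbb{Q}}0/V$. The hypothesis $-(K_X+B)\not\equiv 0$ forces $\dim V>0$, while non-bigness forces $\dim V<\dim X=3$, so $0<\dim V<3$, as required.

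At this point $(X,B)$ is a projective lc pair with $B\in\Phi(R)$, $X$ is Fano type, $-(K_X+B)$ is nef, and $f\colon X\to V$ is a contraction of the required dimension with $(K_X+B)\sim_{\mathbb{Q}}0/V$. Theorem~\ref{fibcomp1} then supplies an $n$-complement $(X,B^+)$ with $B^+\geq B$, for constants $n$ and $p_0$ that depend only on $R$. The principal thing to verify with care is the semi-ampleness step, i.e.\ that the base-point-free theorem indeed applies to $-(K_X+B)$ on a Fano-type threefold in the relevant characteristic range; beyond this, the deduction is essentially formal.
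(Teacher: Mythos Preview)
Your proposal is correct and matches the paper's own proof essentially verbatim: the paper simply says ``Consider the fibration defined by $-(K_X+B)$ and use \ref{fibcomp}, we are done,'' relying on the semi-ampleness of nef divisors on Fano type threefolds in $\chara>5$ (stated in the preliminaries) to produce the contraction $X\to V$ with $0<\dim V<3$. Your verification that the base-point-free theorem applies and that the dimension bounds follow from $-(K_X+B)\not\equiv 0$ and non-bigness is exactly the unpacking of that one-line argument.
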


The strategy for proving this theorem is to use the canonical bundle formula and induction. Though the canonical bundle formula generally does not hold in positive characteristic, we can still obtain some results for Fano type fibrations:
\begin{theorem}[Canonical bundle formula for Fano type fibrations, \Cref{cbf1}]
\label{cbf}
    Assume $(X,B)$ is a lc pair over an algebraically closed field of characteristic $p$, $f:X\to Z$ is a contraction with dim $X=3$ and dim $Z>0$, $K_X+B\sim_{\mathbb Q} 0/Z$, $B$ is relatively big over $Z$. Let $\epsilon>0$ be a real number and $R\subset [0,1]$ be a finite set of rational numbers, then there is a prime number $p_0=p_0(\epsilon,R)$ such that if $\chara k=p>p_0$ and one of the following conditions holds:
    \begin{enumerate}
        \item $\dim Z=3$, i.e. $X\to Z$ is birational,
        \item $\dim Z=2$, $B^{h}\in\Phi(R)$ where $B^h$ is the horizontal part of $B$,
        \item $\dim Z=2$, $(X,B)/Z$ has lc general fibers,
        \item $\dim Z=1$, $(X_\eta,B_\eta)$ is $\epsilon$-lc and $B^{h}\in\Phi(R)$,
        \item $\dim Z=1$, $X_\eta$ is an $\epsilon$-lc Fano variety and $B^h\in\Phi(R)$.
    \end{enumerate}
    Then we have the following formula:
    \[K_X+B\sim_{\mathbb Q} f^*(K_Z+B_Z+M_Z)\]
    where \[B_Z:=\sum_{D\text{ prime divisor on }Z}(1-\text{lct}_{\eta(D)}(f^*D,X,B))D\]
    is the discriminant part and $M_Z$ is a b-nef ${\mathbb Q}$-b-divisor.
\end{theorem}

The canonical bundle formula connects the singularities and positivity of the base space and the whole space of a fibration, in particular, we will have the following pleasant corollary:
\begin{corollary}[Contraction of Fano type threefolds is of Fano type, \Cref{contr}]
\label{contrac}
    Assume $(X,B)$ be a $3$-dimensional projective lc pair with a contraction $f:X\to Z$, where $K_X+B\sim_{{\mathbb Q}}0/Z$. Let $R\subset [0,1]$ be a finite set of rational numbers and $\epsilon>0$ be a real number, then there exists a prime number $p_0$ depending only on $R$ and $\epsilon$ such that if $-(K_X+B)$ is nef, $X$ is of Fano type, char $k=p>p_0$, $B\in\Phi(R)$ and one of the following conditions holds:
    \begin{enumerate}
        \item $Z$ is a projective normal surface,
        \item $Z$ is a projective normal curve, $(X,B)$ is $\mathbb Q$-factorial dlt and $X_\eta$ is an $\epsilon$-lc Fano surface.
    \end{enumerate}
    Then $Z$ is of Fano type, except in the case where $X\to Z$ is a surface fibration over an elliptic curve and the canonical bundle formula takes the form 
    \[K_X+B\sim_\mathbb Qf^*K_Z\sim_\mathbb Q0.\]
    Here $B_Z=M_Z=0$, $(X,B)$ is not klt and not geometrically klt, $B^\text{ver}=0$ and each normalization $S^\nu$ of a lc place $S\in\lfloor B\rfloor$ is not of Fano type.
\end{corollary}

This corollary is essential for the induction steps in the proof of \Cref{Main}. We also prove that the following results which predicts that relative complements for Fano type fibrations in large characteristic is bounded, which is a crucial input for the boundedness of global complements.

\begin{theorem}[\Cref{recomp}]
\label{relcomp}
    Assume $(X,B)$ is a 3-dimensional projective lc pair, $f:X\to Z$ is a contraction with $3>\dim Z>0$, let $R\subset [0,1]$ be a finite set of rational numbers, then there is some natural number $n=n(R)\in\mathbb N$ a prime number $p_0=p_0(R)$ such that suppose:
    \begin{enumerate}
        \item $\chara k>p_0$
        \item $B\in\Phi(R)$,
        \item $X$ is of Fano type$/Z$,
        \item $-(K_X+B)$ is nef$/Z$.
    \end{enumerate}
    Then, for any $z\in Z$, there is an $n$-complement $K_X+B^+$ of $K_X+B$ over $z$ with $B^+\geq B$.
\end{theorem}

We now give an outline of the paper. In Chapter 2, we will give the preliminary knowledge on birational geometry. In Chapter 3, we prove \Cref{cbf} with a similar method in \cite{jxtcbf} by proving generic normality. In Chapter 4, we prove one side of our main theorems \Cref{fibcomp1} and \Cref{Main} by using some partial result of \Cref{relcomp}, the main techniques are a limited vanishing theorem for Fano type fibrations in positive characteristic and the effective canonical bundle formula. In Chapter 5, we finish the proof of \Cref{relcomp} by lifting Frobenius stable sections. We will see the main difficulty of \Cref{relcomp} comes from the case when $-(K_X+B)$ is nef and big over a curve $Z$. If $Z$ is replaced by a point, this is the question on the boundedness of global complements for lc weak Fano pairs, which has not yet been solved.

Many of the ideas in this article are modified and retrofitted from \cite{birkar2019antipluricanonicalsystemsfanovarieties} and some other papers on birational geometry in positive characteristic. If some work’s proof is absolutely characteristic free but the known reference only proves the 0-characteristic case, we will refer it directly.

\paragraph{\tbf{Acknowledgments}}
    The author would like to thank his supervisor Caucher Birkar for his patient guidance and suggestions. The author also would like to thank Zheng Xu, Fulin Xu, Xiaowei Jiang, Haoran Zhu, Florin Ambro for reading the draft of this paper and giving the author many important comments while writing this paper.

\section{Preliminaries}

In this chapter, some basic notions and results of birational geometry are mentioned for readers who are not familiar with the subject. In this paper, all varieties are quasi-projective and reduced schemes of finite type over an algebraically closed field $k$ of characteristic $p>5$ and the ambient variety $X$ is projective unless stated otherwise. 

\paragraph{\tbf{Resolution of singularities}}
For a variety $X$, a resolution of singularity is a proper birational map $f:Y\to X$ from a smooth variety $Y$, which is an isomorphism over the regular locus of $X$, and for the singular locus $X_{sing}$, one has $f^{-1}X_{sing}$ is a divisor with simple normal crossings. 
\begin{theorem}
    For a 3-dimensional variety $X$, there is a resolution of singularity $f:Y\to X$ which is obtained by a sequence of blow-ups along smooth centers over $X_{sing}$.
\end{theorem}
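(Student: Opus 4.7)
My plan is not to give an independent argument; the statement is a deep classical result, so I would simply invoke the existing literature. In our setting $\chara k = p > 5$, Abhyankar's resolution theorem for threefolds already applies directly. More recently, Cossart--Piltant extended resolution of threefolds to arbitrary characteristic, and Cutkosky has produced a streamlined treatment. Any of these references supplies a smooth variety $Y$ together with a projective birational morphism $f: Y \to X$ that is an isomorphism over the regular locus of $X$.

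For the refinement that $f$ can be realized as a composition of blow-ups along smooth centers lying over $X_{\mathrm{sing}}$, I would point out that all of the above constructions proceed via \emph{permissible} blow-ups: at each stage a smooth subscheme of the current singular locus is selected using suitable invariants (multiplicity, Hilbert--Samuel function, characteristic polyhedron, \emph{etc.}) and then blown up, and the process is shown to terminate after finitely many steps. A subsequent principalization step applied to the exceptional locus of the composition then guarantees that $f^{-1}(X_{\mathrm{sing}})$ has simple normal crossings support, which is the last requirement of the theorem.

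The main obstacle, were one to attempt a self-contained proof, would be the well-known difficulty of resolution in positive characteristic: Hironaka's characteristic-zero induction breaks down because of subtle wild behavior of hypersurface singularities under blow-up, and overcoming this is precisely the technical heart of the cited works. Since only the bare existence statement is needed downstream in the paper, I would simply defer to those references rather than reproduce their machinery here.
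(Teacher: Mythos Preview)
Your proposal is correct and matches the paper's approach exactly: the paper's proof consists solely of citations to Cutkosky and Cossart--Piltant, deferring entirely to the existing literature on threefold resolution in positive characteristic. Your additional commentary on permissible blow-ups and the difficulties of positive-characteristic resolution is accurate context but goes beyond what the paper itself records.
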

\begin{proof}
    See \cite{Cutkosky2004ResolutionOS}, \cite{Cossart2008ResolutionOS} and \cite{Cossart2009RESOLUTIONOS}.
\end{proof} 

\paragraph{\tbf{Hyperstandard sets}}
Let $R\subset [0,1]$ be a finite set of rational numbers, we define the hyperstandard set $\Phi(R)$ associated to $R$ to be:
\[\Phi(R):=\{0,1\}\cup\{1-\frac{r}{m}|r\in R, m\in \mathbb Z^+\}.\]
$\Phi(R)$ is a DCC set (i.e., a set of real numbers in which every non-empty subset has a minimal element) of rational numbers contained in $[0,1]$ with the only accumulation point $1$.

If $\Phi(R)$ is a hyperstandard set and $I$ is a common denominator of elements in $R$, then for any $b\in \Phi(R)$ and $n$ divisible by $I$, $nb\leq\lfloor (n+1)b\rfloor$.

\paragraph{\tbf{Divisors, pairs and linear systems}} We define $\mathbb N=\mathbb Z^{\geq0}$. Divisors refer to Weil divisors, i.e., formal linear combinations of integral codimension-1 closed subvarieties. These correspond to reflexive sheaves of rank one up to linear equivalence. The notion of Cartier divisors follows the usual definition, which correspond to invertible sheaves or line bundles up to linear equivalence. A divisor is called a prime divisor if it is integral. A divisor $D$ over $X$ is defined as a divisor on some birational model $W\to X$. For a variety $X$, the dualizing sheaf $\omega_X$ denotes to be the lowest cohomology sheaf of its dualizing complex. The reflexive hull of $\omega_X$ corresponds to a divisor $K_X$ up to linear equivalence, which is called the canonical divisor of $X$. 

For $\mathbb F=\mathbb Q,\mathbb Z_{(p)}=(\mathbb Z-p\mathbb Z)^{-1}\mathbb Z$, $\mathbb F$-divisors are the $\mathbb F$-linear combination of divisors and $\mathbb F$-linear equivalences between divisors are generated by $\mathbb F$-linear combination of linear equivalences. A similar definition applies for $\mathbb F$-Cartier divisors. An $\mathbb F$-divisor is $\mathbb F$-Cartier if it is $\mathbb F$-linear equivalent to an $\mathbb F$-Cartier divisor. $(X,B)$ is called a sub pair if $X$ is a normal variety and $B$ is a $\mathbb Q$-divisor such that $K_X+B$ is $\mathbb Q$-Cartier and $B\leq 1$ (in coefficients).  A sub pair $(X,B)$ is called a pair if $B\geq 0$. Here, $B$ is called a boundary if $(X,B)$ is a pair.

For an $\mathbb F$-divisor $M$, we often denote \[H^0(M):=H^0(X,\mathcal O_X(\lfloor M\rfloor))=\{f\in k(X)\mid\text{div}(f)+M\geq 0\}.\] The linear system of $M$ is defined as  \[|M|:=\Proj(H^0(M))=\{N\sim M\mid N\geq0\}.\] The $\mathbb F$-linear system is defined as \[|M|_{\mathbb F}:=\{N\sim_{\mathbb F}\mid N\geq0\},\]in particular \[|M|_{\mathbb Q}=\bigcup\limits_{m\in\mathbb N} \frac{1}{m}|mM|.\]The base locus $Bs(|M|)$ denotes the maximal closed subset of $X$ contained in each $N\in|M|$, the stable base locus is defined as \[Bs(|M|_{\mathbb Q}):=\bigcap\limits_{m\in\mathbb N} Bs(|mM|).\] If $|M|\neq\emptyset$, $|M|$ will define a rational map \[\phi_M:X\dashrightarrow |M|^\lor\simeq \mathbb P^n,\] which is determined on $X\setminus Bs(|M|)$ by mapping $x$ to the hyperplane in $H^0(M)$ consisting of sections whose corresponding divisors $N\sim M$ pass through $x$. 

\paragraph{\tbf{B-divisors and generalized pairs}}

Let $X$ be a variety, a b-divisor $\mathbf M$ on $X$ is a configuration of divisors $\mathbf M_Y$ on each projective birational model $Y$ over $X$ such that if $f:Z\to Y$ is a morphism of birational models over $X$, then $f_*\mathbf M_Z=\mathbf M_Y$. A b-divisor is said to be represented by $Y$ if $\mathbf M_Y$ is $\mathbb R$-Cartier and if $\mathbf M_Z=f^*\mathbf M_Y$ holds for any projective birational morphism $f:Z\to Y$. Usually we will use $M:=\mathbf M_X$ to represent the b-divisor $\mathbf M$ for convenience.

We say a b-divisor $\mathbf M$ is b-nef if it is represented by some model $Y$ and $\mathbf M_Y$ is nef. Suppose $X$ is a $\mathbb Q$-factorial surface and $\mathbf M$ is a b-nef b-$\mathbb Q$-divisor on $X$, then $M$ is nef. Indeed, if $\mathbf M$ is represented by some $Y$ and $C$ is any curve on $X$, then \[M.C=f_*\mathbf M_Y.C=\mathbf M_Y.f^*C\geq0,\] which implies $M$ is nef. In general, a b-nef b-divisor $\mathbf M$ is not nef on $X$.

A generalized pair is given as $(X',B'+M')/Z$ where $X'$ is a normal variety with a projective morphism $X'\to Z$, $B'\geq0$ a $\mathbb Q$-divisor (usually $B'\leq 1$) on $X'$ and a b-$\mathbb Q$-Cartier $b$-$\mathbb Q$-divisor $M'$ represented by some projective birational morphism $\phi:X\to X'$ and a $\mathbb Q$-Cartier $\mathbb Q$-divisor $M$ on $X$ such that $M$ is nef over $Z$ and $M'=\phi_*M$ and $K_{X'}+B'+M'$ is $\mathbb Q$-Cartier. Since $M'$ is defined birationally, one may assume that $X\to X'$ is a log resolution. $M$ is viewed as a b-divisor in generalized pairs.

\paragraph{\tbf{Singularities from MMP}}
Suppose $D$ is a prime divisor on $X$, for any $\mathbb Q$-divisor $A$ on $X$ we define $\mu_D(A)$ to be the coefficient of $D$ in $A$. For a prime divisor $D$ on a log resolution $W/X$ of the (resp. sub-)pair $(X,B)$, let $K_W+B_W$ be the pullback of $K_X+B$, the log discrepancy of $(X,B)$ is defined as \[a(D,X,B):=1-\mu_D(B_W).\]The log discrepancy is a number defined up to strict transformations along the birational maps between smooth models of $X$. One say the (resp. sub-)pair $(X,B)$ is (resp.sub-)lc (resp. klt, plt, canonical, terminal, $\epsilon$-lc) if $a(D,X,B)\geq 0$ (resp. $>0$, $>0$ for exceptional $D$, $\geq 1$ for exceptional $D$, $>1$ for exceptional $D$, $\geq\epsilon$) for every $D$ over $X$. A non-klt place of a sub pair $(X,B)$ is a prime divisor $D$ on birational models of $X$ such that $a(D,X,B)\leq0$. A non-klt center is the image on $X$ of a non-klt place. A (resp.sub-)pair is (resp.sub-)dlt if it is lc and log smooth near generic points of non-klt centers.

For a generalized pair $(X',B'+M')$ and a divisor $D$ over $X$, take a sufficiently high resolution $f:X\to X'$ defining $M'=f_*M$ and contains $D$, we define $K_X+B+M:=f^*(K_{X'}+B'+M')$, and one can similarly define generalized version of lc, klt, plt, $\epsilon$-lc by considering the generalized log discrepancy \[a(D,X',B'+M'):=1-\mu_D(B).\]If $M=0$, these notions of singularities will coincide with the classical version.

\paragraph{\tbf{Contractions and minimal model programs}}
An algebraic fiber space, or a contraction, $f:X\to Y$ is a projective morphism  between varieties satisfying $f_* \mathcal O_X=\mathcal O_Y$. This condition is equivalent to $f$ being a projective surjective morphism whose Stein factorization has a trivial finite part, or alternatively, to $f$ being a projective surjective morphism such that the function field of $Y$ is algebraically closed in that of $X$. It's well known the fibers are connected.

We use standard results of the minimal model program (MMP), MMP in char $k>5$ up to dimension 3 is already fully known in \cite[1.1]{KYH20lcmmp}:
\begin{theorem}[cf. \cite{KYH20lcmmp}]
    Let $(X,B)/Z$ be a 3-dimensional lc pair over $k$ of char $>5$, $X\to Z$ be a projective contraction, then there is a minimal model program$/Z$ on $K_X+B$ such that:
    \begin{enumerate}
        \item If $K_X+B$ is pseudo-effective$/Z$, then the MMP terminates with a log minimal model$/Z$.
        \item If $K_X+B$ is not pseudo-effective$/Z$,then the MMP terminates with a Mori fiber space$/Z$.
    \end{enumerate}
\end{theorem}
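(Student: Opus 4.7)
The plan is to run a standard $(K_X+B)$-MMP over $Z$ in three classical stages: produce extremal contractions from a cone theorem, insert flips for the small ones, and verify termination, using the characteristic $p>5$ threefold toolkit of Hacon--Xu and Birkar where characteristic zero would use vanishing.

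The first stage is the cone and contraction theorem for klt threefold pairs in characteristic $p>5$. Since $(X,B)$ is three-dimensional klt and $p>5$, the relative cone $\NEbar(X/Z)$ is locally polyhedral in the $(K_X+B)$-negative region, each $(K_X+B)$-negative extremal ray $R/Z$ is spanned by a rational curve, and $R$ admits a projective contraction $\phi_R\colon X\to Y/Z$ of Mori type: divisorial, flipping (small with $\rho(X/Y)=1$), or of fiber type. The main positive-characteristic inputs here are Keel's base point free theorem for big and nef divisors together with the threefold Kawamata--Shokurov rationality-type arguments adapted to $p>5$. If $\phi_R$ is of fiber type, then $K_X+B$ is not pseudo-effective$/Z$ and we land in case (2) with a Mori fibre space. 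If $\phi_R$ is divisorial, I replace $X$ by $Y$: klt is preserved and the relative Picard number drops by one, so this step can occur only finitely often.

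The delicate stage is the flipping case. Here I invoke the existence of threefold klt flips in characteristic $p>5$, obtained by Hacon--Xu and refined by Birkar: one constructs the flipping contraction's flip $X^+\to Y$ as a relative Proj of the log canonical algebra, whose finite generation is extracted via F-singularity/Frobenius splitting techniques and Keel's semiampleness criterion, circumventing the failure of Kawamata--Viehweg vanishing. After the flip, klt is preserved and singularities only improve. For termination of the resulting flip sequence, the characteristic-free special termination argument of Shokurov (termination along dlt log canonical centers of an auxiliary modification) combined with the fact that log discrepancies of klt threefold pairs take values in a discrete subset of $[0,1]$ suffices; this step is formal once flips exist. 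If $K_X+B$ is pseudo-effective$/Z$, the program therefore terminates with $K_{X'}+B_{X'}$ nef$/Z$, i.e.\ a log minimal model over $Z$.

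The main obstacle, and the only genuinely new ingredient beyond the characteristic zero MMP, is the existence of threefold flips: standard techniques for constructing flips via lifting sections through Kawamata--Viehweg vanishing fail outright in characteristic $p$, and one must replace them by $F$-singularity methods, Keel's semiampleness, and threefold-specific geometric arguments. This is precisely the content of \cite{birkar2014existenceflipsminimalmodels}\cite{birkar2014existencemorifibrespaces}, so after invoking their results Steps 1 and 3 (cone/contraction, termination) become largely formal.
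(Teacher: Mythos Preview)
The paper does not give its own proof of this statement: it is recorded in the preliminaries as a known input and is proved entirely by citation to \cite{birkar2014existenceflipsminimalmodels} and \cite{birkar2014existencemorifibrespaces}. So there is no argument in the paper to compare your proposal against; the paper's ``proof'' is simply to invoke those references.

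Your outline is a reasonable high-level summary of what those references establish (cone/contraction theorem, existence of flips via $F$-singularity and Keel-type semiampleness methods, and termination), and in that sense it is consistent with the paper's treatment. One caution: your termination sketch (``special termination along dlt centers combined with discreteness of log discrepancies'') is a bit glib---special termination by itself does not give full termination of klt flips, and the actual arguments in the cited works (and in Birkar--Waldron) are more involved, running MMP with scaling and using ACC-type inputs. If you intend this as more than a pointer to the literature, that step would need to be filled in; if you intend it as a citation, you may as well do what the paper does and simply refer to \cite{birkar2014existenceflipsminimalmodels}\cite{birkar2014existencemorifibrespaces} directly.
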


\paragraph{\tbf{$\mathbb Q$-factorialization}}
A normal variety is $\mathbb Q$-factorial if every divisor is $\mathbb Q$-Cartier. For a generalized pair $(X',B',M')$ with data $X\to X'$, a $\mathbb Q$-factorial generalized dlt model is a $\mathbb Q$-factorial generalized dlt generalized pair $(X'',B''+M'')$ with a projective birational morphism $\psi:X''\to X'$ under a log resolution $X\to X''$ (after taking a common resolution) such that $B''$ and $M''$ are pushdowns of $B$ and $M$, in particular $K_{X''}+B''+M''=\psi^*(K_{X'}+B'+M')$, and if every exceptional prime divisor of $\psi$ appears in $B''$ with coefficient 1. Such model exists for generalized lc pairs. If $(X',B'+M')$ is generalized klt, then there is a $\mathbb Q$-factorial generalized klt model and $\psi$ is a small morphism (i.e. no divisor is contracted or extracted), this is called a small $\mathbb Q$-factorialization.

\paragraph{\tbf{Volumes, Kodaira dimensions and Iitaka fibrations}}
Let $X$ be a normal projective variety of dimension $d$, and let $D$ be a $\mathbb Q$-divisor on $X$. The Kodaira dimension $\kappa(D)$ (resp. the numerical Kodaira dimension $\kappa_\sigma(D)$) is defined as $-\infty$ if $D$ is not effective (resp. pseudo-effective), and otherwise as the largest integer $r$ such that \[\limsup_{m\to\infty}\frac{h^0(\lfloor mD\rfloor)}{m^r}>0.\] resp. for some very ample divisor $A$\[\limsup_{m\to\infty}\frac{h^0(\lfloor mD\rfloor+A)}{m^r}>0.\] We define the volume \[\text{vol}(D):=\limsup_{m\to \infty}\frac{h^0(\lfloor mD\rfloor)}{m^d},\] and say $D$ is big if $vol(D)>0$ as usual. $|\lfloor mD\rfloor|$ will define a morphism $\phi_m=\phi_{\lfloor mD\rfloor}$. The dimension of the image of $\phi_m$ will stabilize to $\kappa(D)$. The stabilized rational fibration $\phi:X\dashrightarrow \phi(X)$ is called the Iitaka fibration.

\paragraph{\tbf{Fano pairs and varieties of Fano type}}
Let $(X,B)$ be a pair with a contraction $X\to Z$, we say $(X,B)$ is log Fano (resp. weak log Fano) over $Z$ if $-(K_X+B)$ is ample (resp. nef and big) over $Z$. We assume $B=0$ when we don't mention $B$. We say a pair $(X,\Delta)$ is of Fano type over $Z$ if there exists a boundary $B$ such that $(X,\Delta+B)$ is a klt weak log Fano pair over $Z$, or equivalently, if there exists a boundary $\Gamma$ such that $(X,\Delta+\Gamma)$ is klt, $\Gamma$ is big over $Z$, and $K_{X}+\Delta+\Gamma\sim_{\mathbb{Q}}0/Z$.

Suppose $f:X\to Y$ is a birational contraction and $(X,B)$ is of Fano type, then $(Y,B_Y=f_*B)$ is of Fano type as the pushforward of a big divisor is big. Suppose $(X,B)\dashrightarrow(Y,B_Y)$ is a sub-crepant birational map (i.e. there is a common resolution $W$ such that $(K_X+B)|_W\leq (K_Y+B_Y)|_W$), then $(Y,B_Y)$ is of Fano type will imply that $(X,B)$ is of Fano type. Hence taking crepant resolutions, running MMP and taking $\mathbb Q$-factorializations will keep the property of Fano type.

Let $X$ be a variety $\mathbb Q$-factorial of Fano type and suppose $(X,\Delta)$ is klt and $K_X+\Delta\sim_{\mathbb Q}0$. Let $D$ be a $\mathbb Q$-divisor on $X$, then for $\epsilon\ll 1$, we have \[\epsilon D\sim_{\mathbb Q} K_X+\Delta+\epsilon D\sim_{\mathbb Q}K_X+(1-n\epsilon)\Delta +n\epsilon \Delta+\epsilon D.\] Since $\Delta$ is big, we have $\Delta\sim_{\mathbb Q} B+A$ for some effective $\mathbb Q$-divisor $B$ and ample $\mathbb Q$-divisor $A$ on $X$, then we can always find some $n$ large enough such that there is some $ H\sim_{\mathbb Q} nA$ and $H+D>0$. So 
\[\epsilon D\sim_{\mathbb Q}K_X+(1-n\epsilon)\Delta +n\epsilon \Delta+\epsilon D\sim_{\mathbb Q}K_X+(1-n\epsilon)\Delta+ n\epsilon B+\epsilon (H+D).\]
Since $(X,\Delta)$ is klt, for $\epsilon\ll 1$, we always have $(X,(1-n\epsilon)\Delta+ n\epsilon B+\epsilon (H+D))$ is klt. Hence we can always run $\epsilon D$-MMP on $X$ which will terminate to get some models. Moreover if $D$ is a nef divisor on a threefold of Fano type of $\chara>5$, then $D$ is semi-ample by base-point free theorem \cite[1.2]{birkar2014existencemorifibrespaces}.

\paragraph{\tbf{Bounded families}}
Now we introduce the notion of bounded families mentioned in the BAB conjecture. A couple $(X,D)$ is formed by a normal projective variety $X$ and a divisor $D$ on $X$ such that the coefficient of $D$ falls in $\{0,1\}$. Isomorphisms between couples are isomorphisms between base schemes such that the morphism is compatible and onto for boundaries. 

A set $\mathcal P$ of couples is birationally bounded (resp. bounded) over a scheme $S$ if there exist finitely many projective flat morphisms $V^i\to T^i$ of integral schemes of finite type over $S$ and reduced divisors $C^i$ on $V^i$ such that for each $(X,D)\in\mathcal P$ there is an $i$ and a closed point $t=\Spec(H^0(X,\mathcal O_X))\in T^i$ and a birational isomorphism (resp. isomorphism) $\phi:V_t^i\dashrightarrow X$ such that the fiber $(V_t^i,C_t^i)$ over $t$ is a couple and $E\leq C_t^i$, where $E$ is the sum of the strict transform of $D$ and the reduced exceptional divisor of $\phi$. A set $\mathcal R$ of projective pairs $(X,B)$ is said to be log birationally bounded (resp. log bounded) over $S$ if the set of $(X,\text{Supp}B)$ is birationally bounded (resp. bounded) over $S$. And if $B=0$ for all the elements in $\mathcal R$, we usually remove the log and say the set is birationally bounded (resp. bounded) over $S$. If $S=\Spec(k)$ is the base field we are working on, usually we omit the suffix over $S$ and simply say $\mathcal P$ is birationally bounded. We offer a useful characteristic-free criterion for boundedness here, and from this one can assume that $(V_t^i,C_t^i)$ is isomorphic to $(X,D)$ for bounded families in the above definition (cf. \cite[2.21]{birkar2019antipluricanonicalsystemsfanovarieties}).
\begin{lemma}[\cite{birkar2019antipluricanonicalsystemsfanovarieties} 2.20]
    If $\mathcal P$ is a set of couples of dimension $d$, $\mathcal P$ is bounded if and only if there is an $r\in \mathbb N$ such that for any $(X,D)\in \mathcal P$, there is a very ample divisor $A$ on $X$ such that $A^d\leq r$ and $A^{r-1}D\leq r$.
\end{lemma}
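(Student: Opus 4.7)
The plan is to prove the two implications of this biconditional separately, each by reducing to a statement about bounded families of closed subschemes of a projective space of bounded dimension.

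For the forward direction I would unwind the definition of a bounded family: $\mathcal P$ is covered by finitely many projective flat morphisms $V^i\to T^i$ equipped with reduced divisors $C^i$. After stratifying each $T^i$ into finitely many integral locally closed pieces, I may choose on each $V^i$ a divisor $\mathcal A^i$ that is very ample over $T^i$. Flatness of $V^i\to T^i$ makes the fiber-wise intersection numbers $(\mathcal A^i_t)^d$ and $(\mathcal A^i_t)^{d-1}\cdot C^i_t$ locally constant in $t$, so they only take finitely many values as $i$ and the connected components of $T^i$ vary. Taking $r$ to be the largest of these values and letting $A$ be the restriction of the appropriate $\mathcal A^i$ to the fiber $X\simeq V^i_t$ containing $(X,D)$ yields the desired uniform very ample divisor.

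For the reverse direction, the key observation is that bounding $A^d$ with $A$ very ample automatically bounds the embedding dimension. The complete linear system $|A|$ gives a non-degenerate closed immersion $\phi_A\colon X\hookrightarrow\mathbb P^N$ with $N=h^0(A)-1$, and the classical inequality $\deg \phi_A(X)\geq N-d+1$ for non-degenerate irreducible subvarieties of dimension $d$ gives $N\leq d-1+A^d\leq d-1+r$. Thus every $(X,D)\in\mathcal P$ sits inside a fixed projective space $\mathbb P^{N_0}$ with $N_0=d-1+r$, with $\deg X=A^d\leq r$ and $\deg D=A^{d-1}\cdot D\leq r$.

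Now I would invoke the finiteness of components of the Hilbert scheme of $\mathbb P^{N_0}$ parameterizing pairs of subschemes of bounded dimension and degree: only finitely many such components occur, so taking the universal family over the disjoint union of the relevant components and their universal divisors produces finitely many $V^i\to T^i$ and $C^i\subset V^i$ of finite type over $k$ exhibiting $\mathcal P$ as a bounded family. The main obstacle I expect is precisely the single step of converting a bound on $A^d$ into a bound on $h^0(A)$: this rests on the degree-codimension inequality, which is characteristic free but the one genuinely geometric input in the argument; once $N$ is controlled, the remaining passage through Hilbert scheme theory is formal and works in any characteristic.
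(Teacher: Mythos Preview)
The paper does not supply its own proof of this lemma; it is quoted in the preliminaries as a known result from \cite{birkar2019antipluricanonicalsystemsfanovarieties}, Lemma~2.20, so there is no in-paper argument to compare against. Your outline is the standard proof of this fact (and matches the argument in Birkar's original): forward direction by choosing a relatively very ample divisor on each of the finitely many families and using flatness to bound the fiberwise intersection numbers; reverse direction by bounding the embedding dimension via the degree--codimension inequality $\deg X \ge \operatorname{codim} X + 1$ and then invoking finiteness of the relevant Hilbert (or Chow) components. Two minor remarks: first, the exponent $r-1$ in the displayed statement is a typo for $d-1$, which you correctly read; second, the step ``only finitely many such components occur'' deserves one more sentence---bounding $\deg X$ and $\deg D$ in a fixed $\mathbb P^{N_0}$ does not immediately pin down the Hilbert polynomial, so one either passes through the Chow variety of cycles of bounded degree (which is of finite type) or argues that for reduced equidimensional subschemes the lower coefficients of the Hilbert polynomial are bounded in terms of degree and dimension. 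With that caveat your proposal is correct.
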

If a set of varieties $\mathcal R$ is bounded, then the Gorenstein indices, the (anti-)canonical volumes, the indices of the effective Iitaka fibrations (e.g. for varieties that $\kappa(K_X)\geq 0$, the minimal $m$ such that $|mK_X|$ defines the Iitaka fibration), the Picard numbers, etc. are all bounded.

\paragraph{\tbf{Complements}}

Now we introduce the terminology of complements, which is introduced by Shokurov in \cite{3foldflip}. Let $(X',B'+M')/Z$ be a generalized pair, set $T':=\lfloor B'\rfloor$ and $\Delta':=B'-T'$. An $n$-complement of $K_{X'}+B'+M'$ over $z\in Z$ is of the form $K_{X'}+B'^++M'$ such that over some neighbourhood of $z$, $(X,B'^++M')$ is generalized lc, $nM$ is b-Cartier, and \[n(K_{X'}+B'^++M')\sim 0,nB'^+\geq nT'+\lfloor (n+1)\Delta'\rfloor.\] Moreover if $(X',B'+M')$ is generalized klt, then we say $B^+$ is a klt $n$-complement. The boundedness of complements is highly related to the boundedness of varieties. Indeed, the BAB conjecture in characteristic $0$ is proved by proving that these Fano type varieties admits bounded klt complements \cite{birkar2020singularitieslinearsystemsboundedness}.

\paragraph{\tbf{Vanishing theorem for pl-contraction of plt pairs}}
The Kawamata vanishing theorem is generally not true in positive characteristic even for surfaces, a famous counterexample is the Raynaud surfaces, one can also find a counterexample in \cite{Counterexamplevanishing}. However, for log del Pezzo surfaces and smooth Fano threefolds, there is a sequence of work which shows that some limited version of vanishing theorems works. Here we give a vanishing theorem for pl-contraction of plt pairs in positive characteristic which is useful to prove the boundedness of relative complements:
\begin{lemma}\label{van}
    Assume $f:(X,\Gamma)\to Z$ is a contraction of projective varieties, $\phi:X'\to (X,\Gamma)$ is a log resolution, $D$ is a Weil divisor on $X'$, $\Lambda'$ is an effective $\mathbb Q$-divisor on $X$ such that:
    \begin{enumerate}
        \item dim $X=3$, dim $Z>0$, char $k>5$
        \item $(X,\Gamma)$ and $(X',\Lambda')$ are $\mathbb Q$-factorial plt with an irreducible lc center $S:=\lfloor\Gamma\rfloor$ and $S':=\lfloor \Lambda'\rfloor$, $S'$ is the strict transformation of $S$,
        \item $S$ is vertical$/Z$,
        \item $-(K_X+\Gamma)$ is ample$/Z$,
        \item $-S$ is nef$/Z$ if $Z$ is a curve,
        \item $D\sim_{\mathbb R}K_{X'}+\Lambda'-S'+L'$,
        \item $K_X+\Lambda:=\phi_*(K_{X'}+\Lambda')\leq K_X+\Gamma$,
        \item $L'=\phi^*L$ and $L$ is an ample $\mathbb Q$-divisor$/Z$.
    \end{enumerate}
    Then $R^1f'_*\mathcal O_X(D)=0$ near $f(S)$.
\end{lemma}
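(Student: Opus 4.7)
The plan is to reduce the threefold vanishing to a surface vanishing on $S'$, where Kawamata--Viehweg-type statements are available in characteristic $p>5$ for surfaces of Fano type by the work of Tanaka and Cascini--Tanaka. Write $g := f\circ\phi\colon X'\to Z$ and work throughout over a sufficiently small open neighbourhood of $f(S)\subset Z$.

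Since $X'$ is smooth, $S'$ is Cartier, giving the short exact sequence
\[
0\to\mathcal O_{X'}(D-S')\to\mathcal O_{X'}(D)\to\mathcal O_{S'}(D|_{S'})\to 0.
\]
Applying $Rg_*$, it suffices to show that both $R^i g_*\mathcal O_{X'}(D-S')$ and $R^i(g|_{S'})_*\mathcal O_{S'}(D|_{S'})$ vanish for $i>0$ near $f(S)$.

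For the surface term, the plt hypothesis on $(X',\Lambda')$ and adjunction give
\[
D|_{S'}\sim_{\mathbb R} K_{S'}+B_{S'}+L'|_{S'},\qquad B_{S'}:=\Diff_{S'}(\Lambda'-S'),
\]
with $B_{S'}$ a klt boundary. From $-(K_X+\Gamma)$ ample$/Z$ and $(X,\Gamma)$ plt along $S$, adjunction on $X$ shows $(S,\Diff_S(\Gamma-S))$ is klt log Fano$/f(S)$, so $S$ (and its resolution $S'$) is of Fano type$/f(S)$. Since $L'=\phi^*L$, the divisor $L'|_{S'}$ is the pullback of the $f(S)$-ample divisor $L|_S$, hence semi-ample and big$/f(S)$. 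Tanaka's relative Kawamata--Viehweg vanishing for surfaces of Fano type in characteristic $>5$ then yields the required surface-level vanishing.

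For the kernel term, rewrite $D-S'\sim_{\mathbb R} K_{X'}+B'+L'-S'$, where $B':=\Lambda'-S'$ is a klt boundary on $X'$. Using that $X$ is $\mathbb Q$-factorial, write $\phi^*S=S'+E$ with $E\geq 0$ exceptional, so that
\[
D-S'\sim_{\mathbb R} K_{X'}+(B'+E)+\phi^*(L-S).
\]
The hypotheses that $L$ is ample$/Z$ and $-S$ is nef$/Z$ imply $L-S$ is ample$/Z$, so $\phi^*(L-S)$ is nef and big$/Z$. I would then apply the projection formula to isolate $\phi^*(L-S)$, combined with a Grauert--Riemenschneider-type vanishing for the birational $\phi$ applied to the klt pair $(X',B'+E)$ (available on Fano-type threefolds in characteristic $>5$), to reduce the question to a relative Serre-type vanishing on $X$ after twisting by the ample $L-S$. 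The main obstacle is precisely this kernel-term step: genuine 3-dimensional vanishing theorems in positive characteristic are scarce, and the argument must hinge on the birationality of $\phi$ together with the combined positivity of $L-S$ and $-(K_X+\Gamma)$, possibly by peeling off $S'$ a second time so that everything eventually reduces to the surface Kawamata--Viehweg statement used in the previous paragraph.
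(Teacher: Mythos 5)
Your reduction via $0\to\mathcal O_{X'}(D-S')\to\mathcal O_{X'}(D)\to\mathcal O_{S'}(D|_{S'})\to 0$ leaves the essential difficulty untouched, and you say so yourself: the kernel term $R^ig_*\mathcal O_{X'}(D-S')$ is exactly a threefold relative Kawamata--Viehweg statement (a klt boundary plus the nef and big, non-ample, divisor $\phi^*(L-S)$ over a base of positive dimension), and no such theorem is available in characteristic $p$; it cannot be replaced by ``relative Serre-type vanishing,'' since Serre vanishing only applies after twisting by a sufficiently large multiple of an ample divisor, whereas here the twist is the fixed divisor $L-S$. Two further points in that step are incorrect as written: $(X',B'+E)$ with $E$ the $\phi$-exceptional part of $\phi^*S$ need not be klt (the coefficients of $E$ are unbounded, in particular can exceed $1$), and a Grauert--Riemenschneider statement for the birational $\phi$ only controls $R^i\phi_*$, not the higher direct images under the fibration $f$, which is where the actual content of the lemma lies (note the conclusion is only claimed near $f(S)$, a sign that a plt-center-specific vanishing, not a general one, is being used). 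Your surface-term argument is also delicate when $f(S)$ is a point, where one needs genuine Kawamata--Viehweg vanishing on a log del Pezzo surface rather than a relative statement, but this is a secondary issue compared to the kernel term.

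The paper's proof goes the other way around and is much shorter: it does not restrict to $S'$ at all. First, by the Grauert--Riemenschneider-type vanishing for $3$-dimensional dlt pairs in characteristic $>5$ \cite[3]{BernasconiKollarGRvanishing} applied to the birational morphism $\phi$, one has $R^i\phi_*\mathcal O_{X'}(D)=0$ for $i>0$, so by the Leray spectral sequence $R^i(f\circ\phi)_*\mathcal O_{X'}(D)=R^if_*\bigl(\phi_*\mathcal O_{X'}(D)\bigr)$. Second, $\phi_*D$ is of the form $K_X+\Lambda+(L-S)$ with $\Lambda\leq\Gamma$, $L-S$ ample$/Z$ (as $L$ is ample$/Z$ and $-S$ is nef$/Z$), and $(X,\Gamma)$ plt with $-(K_X+\Gamma)$ ample$/Z$; this is precisely the setting of the vanishing theorem for pl-contractions \cite[1.1]{Bernasconiplvanishing}, which gives $R^if_*\mathcal O_X(\phi_*D)=0$ near $f(S)$. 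If you want to salvage your approach, you would need to substitute that theorem (or an equivalent statement) for your kernel-term step; without it the proposal has a genuine gap.
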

\begin{proof}
    By Grauert-Riemenschneider vanishing for 3-dimensional dlt excellent pairs with char $>5$ \cite[3]{BernasconiKollarGRvanishing}, $R^i\phi_*(D)=0$ for $i>0$. If $-S$ is nef$/Z$, we see \[R^1f'_*(D)=R^1f_*(\phi_*(D))=R^1f_*(K_X+\Lambda+(L-S))=0\text{ near }f(S)\] by \cite[1.1]{Bernasconiplvanishing} and the Leray spectral sequence. If $Z$ is a surface, then proofs of \cite[2.2]{Bernasconiplvanishing} and \cite[2.8]{Bernasconibasepointfreeklt} apply similarly for $K_X+\Lambda+L-S$, hence again \[R^1f'_*(D)=R^1f_*(K_X+\Lambda+(L-S))=0\text{ near }f(S)\].
\end{proof}

\paragraph{\tbf{Singularities near the generic fiber}}
Suppose $X\to Z$ is a fibration, then any $\mathbb Q$-divisor $B$ can be uniquely factored as the sum of two $\mathbb Q$-divisors:
\[B=B^h+B^v.\]
Here $B^h$ is called the horizontal part, which contains all the components of $B$ which is dominant over $Z$, and $B^v$ is called the vertical part, consisting of the components of $B$ which is not dominant over $Z$. Let $\eta:=\eta_Z$ denotes the generic point of $Z$, clearly vertical part has no affects to the generic fiber:
\[(X_\eta,B_\eta):=(X,B)\times_Z\eta.\]
Moreover, the coefficients of $B_\eta$ is the same as the coefficients of $B^h$. One can define the generic log discrepancies for any divisor $D$ over $X_\eta$ as an $\eta$-variety to be
\[a_\eta(D,X,B):=a(D,X_\eta,B_\eta).\]
We define the generic version of singularities (klt, lc, etc) with the generic log discrepancies similarly. For Fano type contractions, we have the following lemma which shows small generic log discrepancies must be $0$:
\begin{lemma}
\label{singgen}
    Let $\Phi\subset [0,1]$ be a DCC set and $p>5$, then there is $\epsilon>0$ depending only on $\Phi$ such that if the contraction $(X,B)/Z$ is a 3-dimensional projective pair over a normal curve and $D$ is a prime divisor over $X_\eta$ satisfying that
    \begin{enumerate}
        \item $(X,B)$ is dlt near the generic fiber and $(X,0)$ is klt of Fano type near the generic fiber,
        \item $K_X+B\sim_{\mathbb Q}0/Z$ and $B^h\in\Phi$
        \item $a_\eta(D,X,B)<\epsilon$,
    \end{enumerate}
    then $a_\eta(D,X,B)=0$.
\end{lemma}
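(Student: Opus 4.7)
The plan is to pass to the generic fiber (a Fano type klt surface) and derive a contradiction from global ACC for lc Calabi--Yau surface pairs with DCC boundary coefficients. Suppose for contradiction that no such $\epsilon$ exists; then there is a sequence of data $((X_i,B_i)/Z_i,\,D_i)$ satisfying (1)--(2) with $0<\alpha_i:=a_{\eta_i}(D_i,X_i,B_i)\to 0$. Passing to the generic fiber, let $Y_i$ be the base change of $X_{i,\eta}$ to $\overline{k(Z_i)}$ and $\Gamma_i$ the corresponding base change of $B_{i,\eta}$. By (1), $Y_i$ is klt of Fano type and $(Y_i,\Gamma_i)$ is dlt; by (2), $K_{Y_i}+\Gamma_i\sim_{\mathbb Q}0$, and the coefficients of $\Gamma_i$ coincide with those of $B^h_i$, so they lie in $\Phi$. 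Since log discrepancies are invariant under base change to the algebraic closure, we have $a(D_i,Y_i,\Gamma_i)=\alpha_i\to 0^+$.

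Next, I would extract $D_i$ as the unique exceptional divisor of a surface plt blow-up $f_i:Y'_i\to Y_i$, available on the surface $Y_i$ since $\alpha_i\leq 1$. The identity
\[K_{Y'_i}+f_{i*}^{-1}\Gamma_i+(1-\alpha_i)D_i=f_i^*(K_{Y_i}+\Gamma_i)\sim_{\mathbb Q}0\]
shows that $(Y'_i,\,f_{i*}^{-1}\Gamma_i+(1-\alpha_i)D_i)$ is a 2-dimensional lc Calabi--Yau pair whose boundary coefficients lie in the set $\Phi\cup\{1-\alpha_j\}_j$. This ambient set is DCC: $\Phi$ is DCC by assumption, and $\{1-\alpha_j\}$ is strictly increasing with limit $1$, hence contains no infinite strictly decreasing subsequence. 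Global ACC in dimension $2$, valid in arbitrary characteristic via Alexeev's classification of log surface singularities combined with surface MMP (available once $\chara k>5$), then implies that the coefficients of the boundaries of such a family actually lie in a finite subset of $\Phi\cup\{1-\alpha_j\}_j$. But the numbers $1-\alpha_i$ are pairwise distinct and tend to $1$, producing a contradiction. Taking $\epsilon$ smaller than the resulting gap at $0$ proves the lemma.

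The main obstacle is pinning down the appropriate form of global ACC for $2$-dimensional lc Calabi--Yau pairs with DCC boundary coefficients in positive characteristic $p>5$: in characteristic zero this is Hacon--McKernan--Xu, and for surfaces in any characteristic it is expected to follow from Alexeev's work together with surface MMP in good characteristic, but a careful literature check is required. A secondary point is the existence of the plt blow-up extracting a single surface valuation in this setting, which is standard once $\chara k>5$ and surface MMP is available. If one prefers to avoid global ACC altogether, an alternative is to use the gap property for log discrepancies of 2-dimensional lc pairs with DCC boundary directly; the two routes are equivalent in surface dimension.
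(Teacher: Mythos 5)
Your overall shape (argue by contradiction, extract the divisor computing the small log discrepancy, and contradict global ACC with the DCC set $\Phi\cup\{1-\alpha_i\}$) is the same as the paper's, but your first step contains a genuine gap. You pass from the generic fiber $X_{i,\eta}$, which lives over the function field $k(Z_i)$, to its base change $Y_i$ over $\overline{k(Z_i)}$ and assert that klt/dlt, the boundary coefficients, and the log discrepancies are preserved. In characteristic $p$ the field $k(Z_i)$ is imperfect, so this base change is in general inseparable: the geometric generic fiber need not be normal (the canonical divisor changes by a conductor-type term, $K_Y+(p-1)C\sim\pi^*K_{X_\eta}$, exactly the phenomenon the paper has to fight in the proof of the canonical bundle formula), horizontal components of $B$ can acquire $p$-power multiplicities after base change (so the coefficients of $\Gamma_i$ need not lie in $\Phi$, nor stay $\leq 1$), and log discrepancies are not invariant. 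Crucially, this lemma is stated for every $p>5$ with $\epsilon$ depending only on $\Phi$, and there is no $\epsilon$-lc or boundedness hypothesis available to rule out the inseparable pathologies or to discard finitely many bad characteristics; indeed the lemma is used precisely as an input to such boundedness arguments, so you cannot appeal to them here. Your appeal to "invariance under base change to the algebraic closure" is therefore unjustified, and with it the application of surface global ACC over algebraically closed fields to $(Y'_i,\,f_{i*}^{-1}\Gamma_i+(1-\alpha_i)D_i)$.

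The paper avoids this entirely: it extracts $D_i$ on $X_{i,\eta}$ itself (over the imperfect field $k(Z_i)$), notes that the resulting model is still of Fano type with big boundary, and then invokes global ACC for Fano type fibrations valid in this setting (Waldron, cited as \cite[5.1]{waldron3mfsimperf}), so no geometric base change and no invariance claim is needed. If you want to keep your route, you would have to prove (uniformly in $p>5$) geometric normality and preservation of the lc structure for these Fano type generic fibers, which is not available at this level of generality; replacing that step by an ACC statement formulated over imperfect fields, as the paper does, is the correct fix.
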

\begin{proof}
    We prove this by contradiction. Suppose $(X_i/Z_i,B_i,D_i)$ is a sequence of pairs and divisors such that $a_\eta(D_i,X_i,B_i)=\epsilon_i\to 0$. We set $X'_{i,\eta}\to X_{i,\eta}$ to be the morphism only extracts $D_i$ and let $K_{X'_{i,\eta}}+B_i'$ to be the pull back of $K_{X_{i,\eta}}+B_{i}$ and let $b_i:=1-\epsilon_i$. Since $X_i$ is of Fano type and $b_i>0$, it follows that $X'_i$ is also of Fano type and $B'_{i,\eta}$ is big. Since $B'_i\in\Phi':=\Phi\cup\{b_i|i\in\mathbb N\}$ is DCC and $\{b_i\}$ is not finite, we get a contradiction by global ACC for Fano type fibrations \cite[5.1]{waldron3mfsimperf}.
\end{proof}

\paragraph{\tbf{Generic normality of bounded families$/\mathbb Z$}}
The following facts are useful to prove the canonical bundle formula in positive characteristic.
\begin{lemma}\label{genorm}
    Suppose $\mathcal P$ is a bounded family$/\Spec\mathbb Z$ of projective $k$-varieties for $k$ varying among all fields. Suppose a component $X_0\subset X\in\mathcal P$ is a normal variety$/k$, then there is $p_0=p_0(\mathcal P)$ such that if char $k>p_0$, $X_0$ is geometrically normal.
\end{lemma}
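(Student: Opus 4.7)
The plan is to combine openness of the geometric normality locus in a flat projective family with the fact that every field of characteristic zero is perfect, so that normal varieties over such fields are automatically geometrically normal.

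First I would reduce to a single bounding morphism. By the definition of boundedness over $\Spec \mathbb{Z}$, there are finitely many flat projective morphisms $V^i \to T^i$ of integral schemes of finite type over $\Spec \mathbb{Z}$, and every $X \in \mathcal{P}$ is (birational to) a fiber $V^i_t$ after some base change $\Spec k \to T^i$. It suffices to produce a bound for each family and take the maximum. Fix one such $V \to T$. Using Chevalley's theorem and generic flatness, I would stratify $T$ into finitely many locally closed subschemes, and by further taking the closures of the irreducible components of $V$, may assume on each stratum that $V \to T$ is flat projective with integral generic fiber. Under this reduction, a normal component $X_0 \subset X$ is recorded as a normal component of a fiber $V_t$ on some stratum.

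Next I would apply EGA IV: for a flat projective morphism $V \to T$ of finite presentation, the locus
\[U := \{t \in T \mid V_t \text{ is geometrically normal over } \kappa(t)\}\]
is open in $T$, while the locus $N := \{t \in T \mid V_t \text{ is normal}\}$ is constructible; hence so is $N \setminus U$. The key observation is that whenever $\chara \kappa(t) = 0$, the residue field $\kappa(t)$ is perfect, so normality of $V_t$ is equivalent to geometric normality over $\kappa(t)$. Consequently, $N \setminus U$ does not meet any point of $T$ lying over $(0) \in \Spec \mathbb{Z}$. By Chevalley, the image of $N \setminus U$ in $\Spec \mathbb{Z}$ is constructible, and since it omits $(0)$, it must be contained in a finite set of closed points $\{(p_1), \dots, (p_r)\}$. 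Taking $p_0$ to be the maximum of these primes over all finitely many bounding families then gives the desired bound: if $\chara k > p_0$, then the point $t$ through which $\Spec k \to T$ factors lies in $U$, so $V_t$ is geometrically normal over $\kappa(t)$, and the base change $X_0$ is geometrically normal over $k$.

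The main obstacle is the bookkeeping required to pass from a normal component of $X = V_t \times_{\kappa(t)} k$ to a normal component of the scheme $V_t$ itself: irreducible decompositions of fibers can change as $t$ varies, so a careful stratification of $T$ together with restriction to irreducible components of $V$ is needed to align each normal component of $X \in \mathcal{P}$ with a single component of a fiber on a fixed stratum. Once this reduction is in place, the remaining ingredients---openness of the geometric normality locus in EGA IV and Chevalley's constructibility theorem---are standard.
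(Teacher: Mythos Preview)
Your high-level plan is sound and close in spirit to the paper's, but the step you flag as ``the main obstacle'' is a genuine gap that your proposed stratification does not close, and your auxiliary claim that $N=\{t:V_t\text{ is normal}\}$ is constructible is false. On the component issue: even with $V$ integral, a fibre $V_t$ may be reducible and $V_t\times_{\kappa(t)}k$ may split further; you are told only that \emph{one} component $X_0$ is normal, which does not force $t\in N$, nor does it force the component of $V_t$ lying under $X_0$ to be normal. For instance $\{x^2-\alpha y^2=0\}$ over a field in which $\alpha$ is a non-square is integral but singular at the origin, yet after adjoining $\sqrt\alpha$ it becomes two smooth lines, each normal while the original fibre is not. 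No stratification of $T$ by the number of components of fibres repairs this, since irreducibility of a fibre is not a geometric condition and does not spread out. On constructibility of $N$: take $T=\mathbb A^1_{\mathbb F_p}$ and $V\to T$ the Frobenius $x\mapsto x^p$. Every closed fibre is $\Spec\kappa(t)[x]/(x-b)^p$, hence non-reduced, while the generic fibre $\Spec\mathbb F_p(t)[x]/(x^p-t)$ is a field, hence normal but not geometrically normal. Thus $N=\{\eta_T\}$, which is not a constructible subset of $\mathbb A^1$, so your appeal to Chevalley is unjustified.

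The paper supplies the missing idea: pass to the normalization $\mathcal X^\nu$ of the total space. Since $X_0$ is normal, the composite $X_0\hookrightarrow X\to\mathcal X$ factors through $\mathcal X^\nu$; over a dense open of $\mathcal T$ the fibres of $\mathcal X^\nu\to\mathcal T$ are geometrically normal and agree with the normalizations of the fibres of $\mathcal X$, and one checks directly that $X_0$ is identified with an irreducible component of $(\mathcal X^\nu)_t$, hence is itself geometrically normal. Noetherian induction on $\mathcal T$ handles the remaining closed locus. This total-space normalization is precisely what lets one bypass both the component bookkeeping and any appeal to constructibility of the non-geometric normality locus.
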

\begin{proof}
    Since $\mathcal P$ is bounded, we see there is a flat proper morphism $\mathcal X\to\mathcal T$ of reduced separated schemes of finite type over $\Spec\mathbb Z$. Since $\mathcal T$ is excellent, we assume components of $\mathcal T$ are affine regular by taking a stratification and by Noetherian induction. $\mathcal T$ has finitely many irreducible components, consider $k\to \mathcal T$ parametrizing $X$, if it falls in an irreducible component $\mathcal S$ with the function field of characteristic $p>0$, we just ignore this component $\mathcal S$ and make $p_0>p$. So we may assume $\mathcal T$ is irreducible with function field of characteristic $0$. Now consider the normalization $\mathcal X^\nu$ of $\mathcal X$. We see that geometric normality, geometric regularity and geometric reducedness is an open condition (cf. \cite[II.6.9.1 and III.12.1.1]{PMIHES_1967__32__5_0}). Since $\mathcal X^\nu\to \mathcal T$ is a flat surjective morphism between integral schemes of characteristic $0$, outside a closed subscheme $\mathcal V\subset \mathcal T$, the fiber $X'$ of $\mathcal X^\nu/\mathcal T$ is geometrically normal and isomorphic to the normalization of the corresponding fiber of $\mathcal X/\mathcal T$.
        
    If $X_0$ is contained in the fiber over $t\in\mathcal T-\mathcal V$ of $\mathcal X/\mathcal T$. Since $X_0$ is normal and we have a morphism $X_0\to\mathcal X_t$ which is dominant over a component of $\mathcal X_t$, we have a natural morphism $f:X_0\to(\mathcal X^{\nu})_t$ by the universal property of a normalization. We claim that the morphism $X_0\to(\mathcal X^\nu)_t$ identifies $X_0$ as one component of the fiber $X':=(\mathcal X^\nu)_t$ over an open subset of $\mathcal T$. Indeed since the generic fiber of $\mathcal X/\mathcal T$ and $\mathcal X^\nu/\mathcal T$ is geometrically reduced, so is a geometric fiber over an open subset of $\mathcal T$, say outside $\mathcal W$. Since $\mathcal X/\mathcal T$ and $\mathcal X^\nu/\mathcal T$ are proper flat, we may assume dim $X$ is the relative dimension of $\mathcal X/\mathcal T$ and the families is equidimensional over an open subset of $\mathcal T$, say outside $\mathcal Y$. We now consider the case when $k\in\mathcal T-\mathcal W-\mathcal Y-\mathcal V$, we see $X_0\to X'\to X$ is identity on $X_0$. Since they have the same dimension, this gives a birational morphism between $X_0$ and a component of $X'$. Since $X_0$ is normal and $X'\to X$ is finite since $\mathcal X^\nu\to\mathcal X$ is finite and by proper base change, one see $X_0\to X'$ is an isomorphism onto a component of $X'$. So if $t\notin \mathcal Y\cup\mathcal W\cup\mathcal V$, $X_0$ is a component of $X'$ and hence geometrically normal. Otherwise $t\in\mathcal V':=\mathcal V\cup\mathcal W\cup\mathcal Y$, we replace $\mathcal T$ by $\mathcal V'$ and $\mathcal X$ by $\mathcal X\times_{\mathcal T}\mathcal V'$ and do the same arguments above, by Noetherian induction we are done.
\end{proof}

\paragraph{\tbf{Openness of klt locus}}
We show that klt locus for Fano type log Calabi-Yau pairs are open in sufficiently large linear systems.
\begin{lemma}[Openness of klt locus for Fano type threefolds]
\label{kltopen}
    Let $X$ be a ${\mathbb Q}$-factorial Fano type threefold in char $k>5$, let $(X,\Delta)$ be a klt pair with $K_X+\Delta\sim_{\mathbb Q}0$. Then for some large $n$ such that $n\Delta$ is integral and an open neighbourhood $V\subset |n\Delta|$ of $n\Delta$, for any $L\in V$, $(X,\frac{1}{n}L)$ is klt.
\end{lemma}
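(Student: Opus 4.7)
The plan is to reduce the statement to openness of klt in flat families of divisors, which I prove concretely using upper semi-continuity of multiplicity on a common log resolution.

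First, since $X$ is of Fano type, $-K_X$ is big and hence $\Delta \sim_{\mathbb{Q}} -K_X$ is big; I choose $n$ sufficiently large and divisible so that $n\Delta$ is Cartier (by $\mathbb{Q}$-factoriality of $X$) and $|n\Delta| \neq \emptyset$. Take a log resolution $f\colon Y\to X$ of $(X,\Delta)$ that simultaneously resolves the base ideal of $|n\Delta|$: write $f^*|n\Delta|=|M|+F$ with $|M|$ base-point free and $F$ the fixed part. By further blow-ups I arrange that $\Supp(M_{n\Delta}+F+\mathrm{Ex}(f)+f^{-1}_*\Supp(\Delta))$ is simple normal crossings, where $M_{n\Delta}\in|M|$ is the member corresponding to $n\Delta$. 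Writing $K_Y+\Delta_Y=f^*(K_X+\Delta)$ gives a sub-klt log smooth pair, and for any $L\in|n\Delta|$ with associated $M_L\in|M|$ (so $f^*L=M_L+F$), a direct pullback comparison gives
\[
f^*\bigl(K_X+\tfrac{1}{n}L\bigr) \;=\; K_Y+\Delta_Y+\tfrac{1}{n}(M_L-M_{n\Delta}),
\]
so it suffices to find a Zariski open $V\ni[n\Delta]$ on which $\Delta_Y+\tfrac{1}{n}(M_L-M_{n\Delta})$ remains log smooth sub-klt; this translates to klt of $(X,\tfrac{1}{n}L)$.

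For each of the finitely many prime divisors $E$ on $Y$ in the chosen SNC support, $L\mapsto \mu_E(M_L)$ is upper semi-continuous on $|n\Delta|$ because $\{L:\mu_E(M_L)\geq k\}$ is the linear subsystem $kE+|M-kE|$. Intersecting the opens $\{L:\mu_E(M_L)\leq \mu_E(M_{n\Delta})\}$ over these finitely many $E$ produces a Zariski open $V_1\ni[n\Delta]$ on which each SNC-support coefficient is at most $\mu_E(\Delta_Y)<1$.

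The main obstacle is controlling prime components of $M_L$ lying outside the chosen SNC support; these correspond precisely to prime components of $L$ on $X$ not in $\Supp(\Delta)$. I would handle them via openness, in the flat universal family $\mathcal{L}\to|n\Delta|$, of the locus where no component of the fiber has multiplicity $\geq n$, together with a Bertini-type openness of the log-smooth locus for the base-point-free system $|M|$ valid in $\mathrm{char}>5$. Since $n\Delta$ has coefficients $nd_i<n$ on its prime components (by the klt hypothesis $d_i<1$), and $M_{n\Delta}$ was arranged to be SNC by construction, both conditions hold at $[n\Delta]$; this yields a Zariski open $V_2\ni[n\Delta]$. Setting $V:=V_1\cap V_2$ produces the desired neighborhood, and every $L\in V$ satisfies that $(X,\tfrac{1}{n}L)$ is klt.
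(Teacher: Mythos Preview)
Your argument has a genuine gap in the final paragraph, precisely at the step you flag as the ``main obstacle.'' The claimed \emph{Bertini-type openness of the log-smooth locus} for the base-point-free system $|M|$ is not available in positive characteristic, and indeed the locus $\{[L]:\Supp(M_L)\cup(\text{fixed SNC support})\text{ is SNC}\}$ need not be open. Already on $\mathbb A^2$ the pencil $\{y^2-tx^2=0\}$ shows this: at $t=0$ the member is a double line with smooth support, while for $t\neq 0$ it is a pair of lines meeting at a node, so the SNC-support locus is the single point $\{0\}$. More structurally, Bertini smoothness for a base-point-free system requires the induced morphism to be separable (e.g.\ an embedding), and the mobile part $|M|$ on the resolution $Y$ carries no such guarantee. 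Since $Y$ fails to be a log resolution of $(X,\tfrac1nL)$ once $\Supp B_L$ is not SNC, bounding coefficients on $Y$ alone does not certify klt downstairs; your conditions on $V_1$ and on component multiplicities are necessary but not sufficient.

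The paper's proof avoids exactly this trap. It first runs the $(-K_X)$-MMP to reach a crepant model $(X',\Delta')$ with $\Delta'\sim_{\mathbb Q}-K_{X'}$ nef and big; since $X'$ is of Fano type, $|n\Delta'|$ for large $n$ defines a \emph{birational} (hence separable) morphism, so the Bertini theorem for hyperplane sections in arbitrary characteristic applies. The paper then constructs an explicit crepant bijection $|n\Delta|\cong|n\Delta'|$ via $f_*$ and a crepant pullback $f^\sharp$, transporting the klt conclusion back to $X$. The essential input your approach is missing is this reduction to a linear system with a separable (in fact birational) associated map; without it, the openness step cannot be completed in characteristic $p$.
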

\begin{proof}
    We run $-K_X$-MMP on $X$, we end with a model $(X',\Delta')$. Here $K_{X'}+\Delta'\sim_{\mathbb Q}0$ and $f:(X,\Delta)\dashrightarrow(X',\Delta')$ is crepant. We will have $\Delta'=f_*\Delta$. Now for any $0\leq D'\sim_{\mathbb Q}\Delta'$ on $X'$, take a common resolution of $X$ and $X'$, say $\phi:W\to X$ and $\phi':W\to X'$. Set $D:=f^\sharp D':=\phi_*\phi'^*(K_{X'}+D')-K_X=:\phi_*(K_W+D_W)-K_X$. Since the MMP is $K_X$-positive, $D$ is effective by negativity lemma. We claim that
    \[f_*:|\Delta|_{\mathbb Q}\leftrightarrows|\Delta'|_{\mathbb Q}:f^\sharp\] is a bijection. First we prove $f_*f^\sharp=id$, consider the crepant maps \[(X,D)\leftarrow (W,D_W)\to(X',D').\]
    We have $f_*D=f_*(K_X+D)-f_*K_X=\phi'_*(K_W+D_W)-K_{X'}=D'$ as desired. Now we only need to show $f_*$ is injective. Indeed if $f_*D_1=f_* D_2=D'$, then $\phi^*(K_X+D_1)=\phi^*(K_X+D_2)=\phi'^*(K_{X'}+D')$, which means that $\phi^*D_1=\phi^*D_2$ and hence $D_1=D_2$. Moreover, we see that if $nD_1\sim nD_2$, then $n(K_X+D_1)\sim n(K_X+D_2)$ and $n(K_{X'}+D'_1)\sim n(K_{X'}+D'_2)$ and $n D'_1\sim nD'_2$. So $|n\Delta|\simeq |n\Delta'|$ as varieties. For any $D\in \frac{1}{n}|n\Delta|$, $(X,D)\dashrightarrow (X',D')$ is crepant, thus it suffices to prove the klt property for $(X,D')$. Since $\Delta'$ is nef and big, the assertion follows from Bertini's theorem for hyperplane sections in arbitrary characteristic and resolution of singularities \cite{BertiniHyperplane}.
\end{proof}

\section{Adjunctions}

In this chapter, we introduce the adjunction formulas, which are crucial tools for inductions. In general, adjunction formulas relates the log canonical divisors of two varieties. We will introduce the divisorial adjunction and the canonical bundle formula here.

\paragraph{\tbf{Divisorial adjunction}}
The divisorial adjunction relates the singularity of a pair and its restriction on a component of the boundary. Explicitly speaking, we have:
\begin{theorem}[\cite{birkar2014existenceflipsminimalmodels} 4.1,4.2]\label{divadj}
    Let $(X,B)$ be a pair, $S$ be a component of $\lfloor B\rfloor$, and $S^\nu\to S$ be the normalization. Then there is a canonically determined $\mathbb R$-divisor $B_{S^\nu} \geq 0$ such that
\[K_{S^\nu} + B_{S_\nu} \sim_{\mathbb Q} (K_X + B)|_S\]
Moreover let $\Phi\subset [0, 1]$ be a DCC set of rational numbers, assume that:
\begin{enumerate}
    \item $(X,B)$ is lc outside a codimension 3 closed subset, and
    \item the coefficients of $B$ are in $\Phi$.
\end{enumerate}
Then $B_{S^\nu}$ is a boundary with coefficients in $\mathfrak S_\Phi$, here
\[\mathfrak S_\Phi=\{\frac{m-1}{m}+\sum\frac{l_ib_i}{m}\leq 1|m\in\mathbb Z^{>0}\cup\{\infty\},l_i\in\mathbb Z^{\geq 0},b_i\in\Phi\}.\]

\end{theorem}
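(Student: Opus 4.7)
The plan is to construct $B_{S^\nu}$ globally using a log resolution and then to identify the coefficient at each prime of $S^\nu$ by localizing to a two-dimensional log canonical germ and applying the classical surface adjunction formula. The first step uses only divisor arithmetic; the second step is a local calculation that pins down the explicit form $\frac{m-1}{m} + \sum \frac{l_i b_i}{m}$.

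For the construction, I would take a log resolution $\phi \colon W \to X$ of $(X,B)$ and write $K_W + B_W = \phi^*(K_X + B)$. Let $\tilde S$ denote the strict transform of $S$. Since $S$ is a component of $\lfloor B \rfloor$, $\tilde S$ appears with coefficient $1$ in $B_W$, so ordinary adjunction on the smooth variety $W$ gives $K_{\tilde S} + (B_W - \tilde S)|_{\tilde S} = (K_W + B_W)|_{\tilde S}$. The smooth variety $\tilde S$ maps birationally to $S$, hence by the universal property of the normalization it factors through $\tilde S \to S^\nu \to S$; pushing the adjunction identity forward along $\tilde S \to S^\nu$ defines a canonical $\mathbb Q$-divisor $B_{S^\nu}$ with $K_{S^\nu} + B_{S^\nu} \sim_{\mathbb Q} (K_X+B)|_S$. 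Independence of the chosen resolution is routine by dominating two resolutions by a common third and applying the projection formula.

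To identify the coefficient of a prime divisor $D \subset S^\nu$, localize $X$ at the image of the generic point of $D$; this is a codimension-two point of $X$, and by hypothesis $(X,B)$ is lc there. We are reduced to a two-dimensional lc germ in which $S$ is a curve through the closed point and $D$ corresponds to a chosen branch. Here I would invoke the classification of surface lc singularities -- valid in any characteristic by the surface MMP -- to compute the pushforward explicitly on a minimal dlt model. The coefficient of $D$ in $B_{S^\nu}$ comes out as $\frac{m-1}{m} + \sum_i \frac{l_i b_i}{m}$, where $m$ is the local index of $K_X + S$ (with the convention $m = \infty$ yielding coefficient $1$ in the simple-elliptic or cusp case), the $b_i$ are coefficients of components of $B - S$ meeting the branch, and $l_i \in \mathbb Z_{\geq 0}$ are the weights prescribed by the dual graph of a minimal resolution. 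The lc condition then forces $\mu_D(B_{S^\nu}) \leq 1$, while $B_{S^\nu} \geq 0$ is immediate from the formula since all ingredients are nonnegative.

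The main obstacle is the local two-dimensional computation: one must check that the different coefficient really does take the uniform form $\frac{m-1}{m} + \sum l_i b_i / m$ across all lc surface types. This is a dual-graph case analysis (cyclic-quotient $A$-chains, $D$- and $E$-type, simple-elliptic, cusp) that is classical in characteristic zero and carries over to positive characteristic thanks to the characteristic-independent classification of surface log canonical singularities. With the formula in hand, membership of every coefficient of $B_{S^\nu}$ in $\mathfrak S_\Phi$ is immediate from the definition of $\mathfrak S_\Phi$, completing the proof.
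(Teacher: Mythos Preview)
The paper does not prove this theorem at all: it is stated as a quoted result from \cite{birkar2014existenceflipsminimalmodels}, Propositions 4.1 and 4.2, with no argument given. The paragraph immediately following the theorem statement is not a proof of the adjunction formula but a separate self-contained computation showing that when $\Phi = \Phi(R)$ for a finite set $R$, the derived set $\mathfrak S_{\Phi(R)}$ is again contained in some hyperstandard set $\Phi(S)$ with $S$ finite. So there is nothing in the paper to compare your proposal against.

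That said, your outline is the standard and correct route to the result: construct $B_{S^\nu}$ by pushing down from a log resolution, then localize at the generic point of each prime $D \subset S^\nu$ to reduce to a two-dimensional lc germ, and there read off the coefficient from the classification of surface lc singularities. This is exactly the argument carried out in the cited reference (and in Shokurov's and Koll\'ar's earlier treatments), and it is characteristic-free because the surface classification is. Your identification of the parameters $m$, $l_i$, $b_i$ and the handling of the $m = \infty$ case are accurate.
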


Suppose $B\in\Phi(R)$ for some finite set $R\subset[0,1]\cap {\mathbb Q}$, let $I$ be a common denominator of $R$, say $R=\{\frac{r_i}{I}|0\leq r_i\leq I\}_i$ and choose any element $\alpha\in \mathfrak S_{\Phi(R)}$, say
\[\alpha=\frac{m-1}{m}+\sum_{i,n}\frac{l_{i,n}(1-\frac{r_i}{In})}{m}\leq 1.\]
Then we have $\sum\limits_{i,n}l_{i,n}(1-\frac{r_i}{In})\leq 1$. Let $r$ to be the biggest one in all $r_i$'s. If $l_{i,n}\neq 0$ for some $i$ and $n>1$, and if some $l_{i,n}\neq 0$ for another $i,n$ and $1-\frac{r_i}{In}\neq 0$, then we have \[1-\frac{r}{In}+1-\frac{r}{I}\leq 1,\] which implies
\[\frac{r}{I}(1+\frac{1}{n})\geq 1.\]
Hence if $n>I$, then we will get a contradiction. Let $n_\alpha$ to be the largest $n$ such that $l_{i,n}\neq 0$, then for $n_\alpha>I$, we have
\[\alpha=\frac{m-1}{m}+\frac{(1-\frac{r_i}{In})}{m}=1-\frac{r_i}{mIn}\in\Phi(R).\]
If $n_\alpha>1$, then there is only one $l_{i,n}\neq 0$ for $n>0$, which is exactly equal to $1$. we have that
\[\alpha=\frac{m-1}{m}+\frac{1-\frac{r_i}{In}}{m}+\sum_j\frac{l_j(1-\frac{r_j}{I})}{m}=1-\frac{r_i}{mIn}+\sum_j\frac{nl_j(1-\frac{r_j}{I})}{mn}\leq 1\]
which requires that \[\sum_j nl_j(I-r_j)\leq r_i.\]
hence there are only finitely many choices of the set $\{l_j\}|_{r_j\neq I}$ for finitely choices $r_i$ and finitely many choices $n$. So there are only finitely many choices for $\frac{r_j}{In}+\sum\limits_jl_j(1-\frac{r_j}{I})$. Add them to $R$ we get a new finite set $R'$, and $\alpha\in\Phi(R')$ by construction. Similar story happens for $n_\alpha=1$ and hence there is a finite set $S$ depending only on $R$ such that $\alpha\in\mathfrak S_{\Phi(R)}\subset \Phi(S)$.

Divisorial adjunction also closely relates the singularities of the pairs involved. We have the theorem of adjunction and inversion of adjunction in $\chara>5$.

\begin{theorem}[\cite{threedimmmp}, Theorem 6.2]\label{divadj2}
    Let $(X, S + B)$ be a pair where $X$ is a 3-dimensional normal variety in characteristic $p > 5$, $S+B\geq 0$ is a ${\mathbb Q}$-divisor such that $\lfloor S + B\rfloor = S$ is a prime Weil divisor. Let $K_{S^\nu}+B_{S^\nu}=(K_X+B+S)|_{S^\nu}$ to be the divisorial adjunction, then
    \begin{enumerate}
        \item $(X,S+B)$ is lc on a neighborhood of $S$ if and only if $(S^\nu,B_{S^\nu})$ is lc.
        \item If $X$ is ${\mathbb Q}$-factorial, then $(X,S+B)$ is plt on a neighborhood of $S$ if and only if $(S^\nu, B_{S^\nu})$ is klt. Moreover if $(X,S+B)$ is plt, then $S$ is normal.
    \end{enumerate}
\end{theorem}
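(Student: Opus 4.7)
The plan is to handle the two implications of each statement separately. The forward directions (adjunction) follow from a discrepancy computation on a log resolution, while the reverse directions (inversion of adjunction) form the technical core and require MMP techniques available for threefolds in $\chara > 5$.

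For the forward direction of (1) and (2): I would take a log resolution $\pi: Y \to X$ of $(X, S+B)$ with strict transform $S_Y$ of $S$, and write $K_Y + S_Y + B_Y = \pi^*(K_X + S+B)$ as a sub-pair. Applying the divisorial adjunction formula of Theorem \ref{divadj} to $(Y, S_Y + B_Y)$ along $S_Y$ and pushing down via the induced morphism $S_Y \to S^\nu$ recovers $K_{S^\nu} + B_{S^\nu}$. If $(X, S+B)$ is lc (respectively plt) near $S$, the coefficients of $B_Y$ near $S_Y$ are $\leq 1$ (respectively $<1$ off $S_Y$), and this bound transfers to $B_{S^\nu}$, yielding lc (respectively klt) on $S^\nu$.

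For inversion of adjunction in (1): I would argue by contradiction, assuming $(S^\nu, B_{S^\nu})$ is lc but $(X, S+B)$ is not lc in any neighborhood of $S$. The key tool is the existence of a $\mathbb Q$-factorial dlt modification $\pi: (Y, S_Y + \Delta_Y) \to (X, S+B)$ in $\chara > 5$ for threefolds, provided by the MMP of \cite{birkar2014existenceflipsminimalmodels}, where every $\pi$-exceptional divisor $E$ with $a(E, X, S+B) \leq 0$ appears in $\Delta_Y$ with coefficient $1$. If $(X,S+B)$ is not lc, there exists such an $E$ with $a(E, X, S+B) < 0$ in the sub-boundary, and a connectedness argument in the spirit of Kollár–Shokurov forces the non-klt locus to meet $S_Y$. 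Restricting $(Y, S_Y+\Delta_Y)$ to $S_Y$ via Theorem \ref{divadj} and pushing forward along $S_Y \to S^\nu$ then produces a divisor with coefficient $>1$ on $S^\nu$, contradicting the lc hypothesis on $(S^\nu, B_{S^\nu})$. For (2), the same strategy applies with strict inequalities: $\mathbb Q$-factoriality of $X$ ensures any dlt modification extracting a plt-violating divisor must be small, so $S_Y \to S$ is a small birational morphism; since $(Y, S_Y+\Delta_Y)$ is dlt with $\lfloor \cdot \rfloor = S_Y$ irreducible, $S_Y$ is normal, and combined with the $S_2$ property of $S$ coming from $\mathbb Q$-factoriality of $X$, the map $S \to S_Y$ is an isomorphism, giving normality of $S$.

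The main obstacle is the connectedness of the non-klt locus in positive characteristic: in characteristic zero this is proved via Kawamata–Viehweg vanishing, which fails in positive characteristic (e.g.\ on the Raynaud surfaces mentioned in the excerpt). To get around this for threefolds in $\chara > 5$, I would replace the vanishing-based argument with a direct MMP-based approach, tracking how divisors with negative discrepancy behave under the steps of a $(K_Y + S_Y + \Delta_Y)$-MMP over $X$ and using the negativity lemma together with Lemma \ref{van} to ensure the exceptional divisors connecting to $S$ cannot be contracted without forcing a non-lc divisor on $S^\nu$. The availability of flips, divisorial contractions, and dlt modifications for threefolds in characteristic $>5$ from \cite{birkar2014existenceflipsminimalmodels} is exactly what makes this MMP substitute for vanishing viable in our setting.
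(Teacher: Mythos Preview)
The paper does not prove this theorem: it is stated with the attribution ``[HX13, Theorem 6.2]'' and no proof is given---it is simply quoted as a known result from Hacon--Xu's work on the threefold MMP in characteristic $p>5$. So there is no ``paper's own proof'' to compare your proposal against.

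That said, a few remarks on your sketch. The forward directions are fine and standard. For inversion of adjunction, the broad strategy (dlt modification plus tracking non-klt centers) is indeed how the Hacon--Xu argument proceeds, but your proposal has two soft spots. First, invoking Lemma~\ref{van} here is potentially circular: that lemma already assumes a plt pair with irreducible lc center, and the pl-vanishing results in \cite{Bernasconiplvanishing} that it relies on are themselves built on the adjunction/inversion machinery you are trying to establish. Second, in your argument for normality of $S$ in part (2), the claim that ``the $S_2$ property of $S$ com[es] from $\mathbb{Q}$-factoriality of $X$'' is not correct---$\mathbb{Q}$-factoriality of the ambient variety does not by itself force a prime divisor to be $S_2$. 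The normality of $S$ in the plt case is established in \cite{birkar2014existenceflipsminimalmodels} and the Hacon--Xu paper by a more delicate argument using the structure of dlt modifications and the MMP directly, not via an $S_2$ shortcut.
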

If we consider F-singularities, we will have a similar theory of F-adjunctions, cf. \cite{Das_2016}. 

\paragraph{\tbf{Canonical bundle formula}}
Another useful adjunction is the fiber space adjunction, or more usually called the canonical bundle formula:
\begin{conjecture}[Canonical bundle formula]
\label{cbff}
    Suppose $(X,B)/Z$ is a contraction, where $(X,B)$ is a generically lc projective pair and $K_X+B\sim_{\mathbb Q} 0/Z$, let $\eta=\text{Spec}(k(D))$ be the generic point of any given prime divisor $D$ on $Z$, then there is 
    \[B_Z:=\sum\limits_{D\text{ prime divisor on }Z}(1-lct_\eta(X,B,f^*D))D\]
     and some pseudo-effective ${\mathbb Q}$-b-divisor $M_Z$ such that $K_X+B_X\sim_{\mathbb Q} f^*(K_Z+B_Z+M_Z)$. Moreover, if $(X,B)$ is lc, then $M_X$ is a b-nef ${\mathbb Q}$-b-divisor, where $M_X=K_X+B_X-f^*(K_Z+B_Z)\sim_{\mathbb Q}f^*M_Z$
\end{conjecture}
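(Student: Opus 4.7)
The plan is to build the formula in three stages: define the discriminant, construct the moduli part as a b-divisor, and establish its positivity.

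First I would pass to suitable birational models, taking a log resolution $W\to X$ and a sufficiently high model $V\to Z$ so that the induced morphism $f_W:W\to V$ is equidimensional with SNC boundary. The coefficient of each prime $D$ in $B_V$ is defined by the stated lct formula; using that $(X,B)$ is generically lc, only finitely many $D$ yield a nonzero contribution, so $B_V$ is a well-defined $\mathbb Q$-divisor. Set $M_W:=K_W+B_W-f_W^*(K_V+B_V)$, where $B_W$ is the crepant $\mathbb Q$-pullback of $K_X+B$. Compatibility under further base change $V'\to V$ (reducing to how lct transforms under blow-ups centered on codimension-one points of $V$) then upgrades the family $\{M_V\}$ to a b-$\mathbb Q$-divisor $M_Z$ with $K_X+B\sim_{\mathbb Q}f^*(K_Z+B_Z+M_Z)$. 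Note that $B_Z$ is typically only a Weil divisor on $Z$, so the equality makes sense only after ascending to a birational model where it is $\mathbb Q$-Cartier.

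Second, pseudo-effectivity of $M_Z$ I would extract from a weak positivity statement for the pushforward $f_*\omega_{X/Z}^{[m]}(mB)$ on a log smooth model. In characteristic zero this is Viehweg; in char $p>5$ one invokes the positive-characteristic weak positivity available for log Calabi--Yau fibrations, combined with generic geometric normality of the general fibre (Lemma \ref{genorm}, valid once $p\gg 0$ inside a bounded family) so that the geometric generic fibre behaves well under base change. For the b-nef improvement when $(X,B)$ is everywhere lc, I would test nefness after pulling back to arbitrary higher models $V'$, track the discriminant using Lemma \ref{singgen} together with ACC for log canonical thresholds to ensure the moduli part changes by an effective correction, and then use the base-point free theorem on a $\mathbb Q$-factorialization to promote pseudo-effectivity of $M_{V'}$ on each model to nefness.

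The main obstacle is unambiguously the b-nef claim. In characteristic zero this is Ambro's theorem, whose proof rests on Hodge theory of the variation along the base, and Hodge theory fails in positive characteristic. Without a general Hodge-type positivity theorem for $f_*\omega_{X/Z}$, the conjecture is open in full generality in char $p$. A realistic target, as in Theorem \ref{cbf}, is to restrict to Fano type fibrations: the general fibre is then a Fano variety which becomes globally F-regular for $p\gg 0$, furnishing Frobenius-splitting substitutes for Kodaira vanishing, and boundedness of the moduli of the general fibre allows one to spread out and reduce to the characteristic zero formula on a bounded family. I would therefore plan to match the paper's case division ($\dim Z=3$ birational, $\dim Z=2$ with hyperstandard horizontal part, $\dim Z=1$ with $\epsilon$-lc Fano generic fibre) and accept that the conjecture as stated must presently be left open beyond these settings.
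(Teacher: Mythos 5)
The statement you were asked to prove is stated in the paper as a \emph{conjecture}, and the paper never proves it: immediately after stating \ref{cbff} the paper points out that it actually \emph{fails} in positive characteristic, citing the counterexample in \cite[3.5]{Witaszek2017OnTC}, and what is proved instead is the restricted Theorem \ref{cbf1} (Fano type fibrations, dimension three, large characteristic, with the listed hypotheses on fibres and coefficients). So your final concession that the statement ``must presently be left open beyond these settings'' is the right instinct but still too optimistic: the b-nefness of the moduli part as formulated in \ref{cbff}, with no restriction on $p$ or on the fibration, is known to be false, not merely open.

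The concrete gap in your sketch is the positivity step. After the (correct, purely formal) construction of $B_Z$ and of $M$ as a b-divisor, you invoke ``the positive-characteristic weak positivity available for log Calabi--Yau fibrations'' to get pseudo-effectivity, and then try to upgrade to nefness via \ref{singgen}, ACC and base-point freeness. No such weak positivity theorem exists in the generality you need --- the failure of semipositivity/weak positivity of $f_*\omega_{X/Z}^{[m]}(mB)$ in characteristic $p$ is precisely the source of the counterexample above, so this step cannot be carried out and nothing downstream of it survives. Moreover, even in the restricted Fano-type setting your proposed route (global $F$-regularity of the fibres for $p\gg 0$, spreading out to characteristic $0$) differs from what the paper actually does: the proof of \ref{cbf1} reduces, using boundedness of the fibres together with generic geometric normality (\ref{genorm}) and the ACC-type statement \ref{singgen}, to the situation where the general/geometric generic fibres are (geometrically) lc, and then quotes the known canonical bundle formulas of \cite{Witaszek2017OnTC} for curve fibres over a surface and of \cite[0.2]{benozzocanonicalbundleformula} for fibrations over a curve; no Frobenius-splitting or reduction-to-characteristic-zero argument is used. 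If you want a correct writeup, you should either prove only the cases of \ref{cbf1} by this reduction, or explicitly record the conjecture as open/false in general and not attempt the weak-positivity step.
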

Here, $B_Z$ is called the determinant part, which is determined uniquely, and $M_X$ is called the moduli divisor. As $M_X=f^*M_Z$, we also call $M_Z$ the moduli part of the fibration. As $f:X\to Z$ is a contraction, the b-nefness of $M_X$ is equivalent to that of $M_Z$. 

In characteristic $0$, the conjecture was completely resolved by a series of works (cf.\cite{Kawamata1997SubadjunctionOL} \cite{Ambro_2005}\cite{2021arXivpositivityofmoduli}\cite{FilsemiampleModulipart}\cite{JLX22}\cite{CHLX23}.) In positive characteristic, however, the conjecture fails in general (cf. \cite[3.5]{Witaszek2017OnTC}).  Some works \cite{Witaszek2017OnTC}\cite{benozzocanonicalbundleformula} have resolved the case when the geometric generic fiber, or equivalently, the general fibers are lc and the base or the fibers are projective curves. In \cite{jxtcbf}, we prove that for Fano type fibration of a threefold with normal general fibers, the canonical bundle formula holds in large characteristics. Using \Cref{genorm} and the similar method in \cite{jxtcbf}, we can remove the normality condition. Explicitly speaking, we have:

\begin{theorem}[Canonical bundle formula for Fano type fibrations]
\label{cbf1}
    Assume $(X,B)$ is a lc pair over an algebraically closed field of characteristic $p$, $f:X\to Z$ is a contraction with dim $X=3$ and dim $Z>0$, $K_X+B\sim_{\mathbb Q} 0/Z$, $B$ is relatively big. Let $\epsilon>0$ be a real number and $R\subset [0,1]$ be a finite set of rational numbers, then there is a prime number $p_0=p_0(\epsilon,R)$ such that if $\chara k=p>p_0$ and one of the following conditions holds:
    \begin{enumerate}
        \item $\dim Z=3$, i.e. $X\to Z$ is birational,
        \item $\dim Z=2$, $B^{h}\in\Phi(R)$ where $B^h$ is the horizontal part of $B$,
        \item $\dim Z=2$, $(X,B)/Z$ has lc general fibers,
        \item $\dim Z=1$, $(X_\eta,B_\eta)$ is $\epsilon$-lc and $B^{h}\in\Phi(R)$,
        \item $\dim Z=1$, $X_\eta$ is an $\epsilon$-lc Fano variety and $B^h\in\Phi(R)$.
    \end{enumerate}
    Then we have the following formula:
    \[K_X+B\sim_{\mathbb Q} f^*(K_Z+B_Z+M_Z)\]
    where \[B_Z:=\sum_{D\text{ prime divisor on }Z}(1-\text{lct}_{\eta(D)}(f^*D,X,B))D\]
    is the discriminant part and $M_Z$ is a b-nef ${\mathbb Q}$-b-divisor.
\end{theorem}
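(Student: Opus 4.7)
The strategy is to reduce to the author's prior work \cite{jxtcbf}, which establishes the canonical bundle formula for Fano type threefold fibrations under the additional assumption that the geometric generic fiber of $f$ is normal, and to remove this normality hypothesis using Lemma \ref{genorm}. The discriminant $B_Z$ is defined intrinsically through log canonical thresholds, so the real task is producing the linear equivalence with $M_Z$ a b-nef b-divisor. Case (1) is essentially trivial: since $f: X \to Z$ is birational one may take $M_Z = 0$, and the identity $K_X + B \sim_{\mathbb Q} f^*(K_Z + B_Z)$ follows from the negativity lemma together with the fact that $K_Z$ is $\mathbb Q$-Cartier on a sufficiently high birational model.

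For cases (2)--(5), I would first show that as $(X,B,f)$ varies subject to the hypotheses (with $p$ arbitrary), the generic fiber pair $(X_\eta, B_\eta^h)$ belongs to a fixed bounded family over $\Spec \mathbb Z$. In cases (4) and (5), general fibers are surfaces; the $\epsilon$-lc Fano hypothesis combined with BAB in dimension two in positive characteristic \cite{boundednessandk2} bounds $X_\eta$, and then $B^h \in \Phi(R)$ together with $K_X+B \sim_{\mathbb Q} 0/Z$ controls the coefficients and degrees of $B_\eta^h$ to yield log boundedness. In cases (2) and (3), the general fiber is a curve whose boundedness comes either from the direct lc hypothesis in (3) or, in (2), from a degree bound produced by $B^h \in \Phi(R)$ combined with $K_X+B \sim_{\mathbb Q} 0/Z$ and the relative bigness of $B$. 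In each subcase, applying Lemma \ref{genorm} to this bounded family produces a $p_0$ beyond which the (already normal) fiber $X_\eta$ is geometrically normal, so that $X_{\bar\eta}$ is normal.

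Once geometric normality of the generic fiber is secured, I would follow \cite{jxtcbf} step by step: on a log resolution $X'\to X$ construct the moduli b-divisor $M_Z$ on a sufficiently high birational model of $Z$ by writing $M := K_{X'} + B' - f'^*(K_Z + B_Z)$ and showing that $M$ descends to a b-nef divisor on $Z$. The b-nefness is obtained by invoking the positivity tools available in characteristic $p>5$: semiampleness on threefolds of Fano type via the base-point free theorem \cite{birkar2014existencemorifibrespaces}, the pl-contraction vanishing of Lemma \ref{van}, and the ACC-type control provided by the hyperstandard set $\Phi(R)$. The passage from a model to $X$ itself uses only the definitional properties of b-divisor pushforward and poses no issue.

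The main obstacle is case (2), where one only assumes $B^h \in \Phi(R)$ rather than lc general fibers, so a priori the generic fiber could acquire non-lc singularities while still satisfying $K_{X_\eta} + B_\eta \sim_{\mathbb Q} 0$. The resolution, as in \cite{jxtcbf}, is to perturb using a DCC argument and the ACC for log canonical thresholds for surface pairs, which reduces case (2) to case (3) after possibly replacing $R$ by a larger finite set and shrinking $Z$; this step is where the dependence $p_0 = p_0(R)$ genuinely enters. The remaining cases essentially reduce to carefully verifying that the arguments of \cite{jxtcbf} remain valid when "normal geometric generic fiber" is replaced by the output of Lemma \ref{genorm}.
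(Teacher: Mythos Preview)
Your high-level strategy---use Lemma~\ref{genorm} to secure geometric normality of the generic fiber, then invoke the methods of \cite{jxtcbf}---is exactly what the paper does, and the paper even says so explicitly. However, several of your stated technical details are wrong in ways that suggest you have not understood where the actual work lies.

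First, for curve fibers (cases (2), (3)), the paper does \emph{not} use Lemma~\ref{genorm} at all. Geometric normality of $X_\eta$ is obtained by a one-line conductor computation: if $Y\to X_{\bar\eta}$ is the normalization, then $K_Y+(p-1)C\sim \pi^*K_{X_\eta}$ with $C\ge 0$ effective, and comparing degrees forces $p\le 3$. The reduction of (2) to (3) is likewise elementary degree counting: an inseparable horizontal component of $B$ would, on a general fiber $F$, contribute at least $p\cdot\min\Phi(R)$ to $\deg(-K_F)\le 2$, so $p\le 2/\min\Phi(R)$. Your proposed mechanism---``ACC for log canonical thresholds for surface pairs''---is the wrong dimension (fibers are curves) and is not what happens.

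Second, your description of how b-nefness of $M_Z$ is obtained is incorrect. Lemma~\ref{van} (the pl-contraction vanishing) plays no role here. Once general fibers are lc, the paper simply cites Witaszek \cite{Witaszek2017OnTC} for curve fibers and Benozzo \cite{benozzocanonicalbundleformula} for surface fibers, both of which already deliver b-nefness.

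Third, and most importantly for the surface-fiber case (4)--(5): geometric normality of $X_{\bar\eta}$ is \emph{not} sufficient to invoke Benozzo. You need $(X_{\bar\eta},B_{\bar\eta})$ to be lc. The paper devotes two substantial paragraphs to this: bounding residue field degrees of singular points so they are separable over $k(Z)$, showing exceptional curves of the minimal resolution are geometrically integral via the dual-graph classification, and a separate degree argument to show $B_\eta$ is geometrically reduced (ruling out inseparable horizontal components). This is the genuine content of \cite{jxtcbf} once normality is in hand, so if you intend to simply cite \cite{jxtcbf} you are fine---but then your phrase ``verifying that the arguments of \cite{jxtcbf} remain valid'' is backwards: Lemma~\ref{genorm} produces exactly the hypothesis of \cite{jxtcbf}, so there is nothing to verify, only to cite.
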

\begin{proof}
    The proof is almost the same idea of \cite[3.3]{jxtcbf}, but we state it here systemically for convenience. The idea of the proof is to reduce to the case where the general fibers are lc and the base or the fiber are smooth curves, and then apply the known results.

    When $\dim Z=3$, in this case we have that \[K_X+B=f^*f_*(K_X+B)\sim_{\mathbb Q} f^*(K_Z+B_Z+M_Z).\] For a divisor $D$ on $Z$, we denote its birational transformation also by $D$ for convenience. Then
    \[
    \begin{aligned}
        \mu_D(B_Z)&=1-\lct_\eta(f^*D,X,B)=1-\sup\{t|(X,B+tD)\text{ is lc near $D$}\}\\&=1-(1-\mu_D(B))=\mu_D(B)=\mu_D(f_*(K_X+B)-K_Z).
    \end{aligned}\]
    Hence $M_Z=0$ and the result follows.
    
    When $\dim Z=2$. In this case, the general fibers are curves. We first reduce the condition 2 case to condition 3 case. That is, we are going to prove the general fibers are lc. Replacing $X$ by its resolution $X'$, we have a crepant model $(X',B'=B^+-B^-)\to (X,B)$, where $(X',B')$ is sub-lc. Also, replace $Z$ by its smooth locus, one may assume that $f:(X,B)\to Z$ is a fibration between smooth quasi-projective varieties. Moreover, the image of the exceptional divisors of $X'\to X$ under $f$ is not surjective on $Z$ since codim$(Sing(X)/X)\geq 2$, hence the horizontal parts of $B'$ and $B$ are the same.
        
    Consider $X_\eta$ the generic fiber of $X/Z$, then it is projective normal, Fano type and Gorenstein as $X$ is smooth. Suppose $\pi:Y\to X_\eta$ is a normalization of an irreducible component of $X_{\bar\eta}$, then by the behavior of canonical divisor under inseparable base changes \cite[1.1]{patakfalvi2018singularitiesgeneralfiberslmmp}, we have \[K_Y+(p-1)C\sim\pi^*K_{X_\eta}\] where $C$ is an integral effective divisor (not 0 iff $X_\eta$ is not geometrically normal), hence $K_Y\sim-(p-1)C+\pi^*K_{X_\eta}$ is anti-ample since deg$(-K_{X_\eta})>0$. Moreover \[-2\leq\text{deg}(K_Y)\leq\text{deg}((1-p)C)\leq 1-p,\] which implies $p\leq 3$, so we choose $p_0>3$.

    Geometric normality, geometric regularity and geometric reducedness is an open condition (cf. \cite[II.6.9.1 and III.12.1.1]{PMIHES_1967__32__5_0}). Since $X/Z$ admits normal geometric generic fiber and $k$ is algebraically closed, we have that a general fiber $F$ is a projective smooth curve. Consider $(F,B_F)$, if a component $D$ of $B$ is vertical, then its contribution in $B_F=B|_F$ is $0$ as $F$ is a general fiber. So $B|_F=B^{hor}|_F$ and we only need to consider the horizontal part which impacts the singularity of the general fiber. For a horizontal component $D\in Supp(B)$, we can move the general fiber such that $D|_F$ has no multiplicity except the inseparable part.
        
    Consider the surjection $D^\nu\to Z$ with inseparable degree $p^{k_D}$ of the field extension $K(D^\nu)/K(Z)$, then by the basic intersection theory, $D|_F=\sum p^{k_D} D_i$ where $D_i$ are reduced divisors. Hence, set $B^{hor}=\sum a_jD_j$ where $a_j\in \Phi$, we have \[0\sim_{\mathbb Q} (K_X+B)|_F=K_F+B^{hor}_F=K_F+\sum_j\sum_ip^{k_{D_j}}a_jD_{j,i}.\] If some $k_{D_j}>0$, then calculating the degree on both sides, one sees that \[0\geq \text{deg}(K_F)+p\min(\Phi)\geq-2+p\min(\Phi),\] hence $p\leq \frac{2}{\min(\Phi)}$. Set $p_0\geq\frac{2}{\min(\Phi)}$ there is no inseparable horizontal part and the general fibers are lc. Hence we fall in the condition 3 case. By \cite{Witaszek2017OnTC} the canonical bundle formula holds in this case. 
        
    In fact,  we have proved that the geometric generic fibre $X_{\bar{\eta}}$ is a smooth rational curve since it is normal and has a canonical divisor with a negative degree. Moreover we have proved that the horizontal part of $B$ is separable over $Z$ under condition 2, hence they are given by a union of rational sections of $X/Z$. We can use the arguments in the proof of \cite[3.1]{Witaszek2017OnTC} to see that $M_Z$ is semi-ample outside a codimension-$2$ closed subset, hence semi-ample since $Z$ is a surface. For condition 3 case, remove the isolated points of $Z$ where the horizontal part of $B$ does not extend and the singular locus of $Z$ away from $Z$, consider the morphism $Z\to \overline{M_{0,m}}$ defined by map $z$ to the $(X_z,\Supp(B_z))$. Then we see that $(X_z,\Supp(B_z))$ is lc for general $z$. Thus the inversion of adjunction for $(U,B^h_U+\phi_U^{-1}Q)$ in the proof of \cite[3.1]{Witaszek2017OnTC} would also apply and the rest proof would follow similarly as under the condition 2 case.
        
    When dim $Z=1$, the general fibers are now surfaces, and from now on one assumes $p>5$. Take a ${\mathbb Q}$-factorialization, we may assume $(X,B)$ is dlt ${\mathbb Q}$-factorial. As $X\to Z$ is a contraction, we have that the generic fiber $X_\eta$ is defined on $\eta=k(Z)$ and $H^0(X,\mathcal O_X)=\eta$. We reduce the condition 4 case to the condition 5 case. Firstly, after running $-(K_X+(1-\upsilon)B)\sim_{\mathbb Q}\upsilon B$-MMP on $(X,B)/Z$ for some $\upsilon$ close to $0$, we end with a good minimal model $(X',B')$ by LMMP and $(X',B')$ is generically lc. Suppose $X'\to Y/Z$ is the semiample fibration with respect to $B'$, then one sees that \[h:(X,B)\to (Y,B_Y)/Z\] is crepant and $-K_Y\sim_{\mathbb Q}B_Y/Z$ is ample$/Z$ and $B_Y=h_*B$. Since $h$ is crepant$/Z$, the construction of the canonical bundle formula is compatible and one can assume that $B$ is ample$/Z$.

    After the reduction, the generic fiber is an $\epsilon$-lc del Pezzo surface and hence belongs to a bounded family over $\mathbb Z$ by \cite{bernasconi2024boundinggeometricallyintegraldel}. By \Cref{genorm}, we have that for $p>p_0$ large enough, the geometric generic fiber of $X/Z$ is normal, so is the general fibers. Since the coefficients of $B^h$ falls in a hyperstandard  set which is DCC, and $K_{X_\eta}+B_\eta\sim_{\mathbb Q} 0$, we have that $B^h$ falls in a finite set by \Cref{singgen}. As a result  $(X_\eta,B_\eta)$ falls in a log bounded set$/\Spec \mathbb Z$ by Hilbert scheme arguments.

    Now there is a very ample divisor $H$ on $X_\eta$ with $-K_X\cdot H=B\cdot H$ bounded and $H^2$ bounded by some natural number $M$. Moreover $H$ induces a closed immersion $X_\eta\to \mathbb P^N_\eta$, hence its base change to algebraic closure $\bar H$ will also induce a closed immersion $X_{\bar\eta}\to \mathbb P^N_{\bar\eta}$, which implies that the geometric generic fiber also falls in a bounded set over $Spec\mathbb Z$. Consider the $\eta$-non-smooth (closed since $X_\eta$ is geometrically normal) points on $X_\eta$ and the non-snc points of $B_\eta$, we call them $x_i$'s. As $X_\eta$ is a klt surface, we see $X_\eta$ is ${\mathbb Q}$-factorial. Moreover, the Cartier index of $K_{X_\eta}$ and $B_\eta$ near $x_i$ is bounded by some natural number $N$ and the defining equations of $X_\eta$ have bounded degrees $N$ since $(X_\eta,B_\eta)/k$ falls in a bounded set$/Spec\mathbb Z$. Thus, for the closed points $x_i$, $\kappa(x_i)/\eta$ has a degree bounded by $N$. Hence, for $p>N$, $\kappa(x_i)/\eta$ is separable. Now we are going to show that the pair $(X_\eta,B_\eta)$ is geometrically lc if $B_\eta$ is geometrically reduced.

    In the following two paragraphs, we denote $k$ to be $\eta$ and denote $X$ to be $X_\eta$ for convenience and we are already over an imperfect base field, the following arguments are similar to those in \cite{sato2024generalhyperplanesectionslog}. In fact, after shrinking $X$, we only need to consider a surface singularity $(x\in X,B)$, here $x=\Spec k\to X$ is a closed point. If $x\in X$ is smooth and $B$ is snc near $x$, since $\kappa(E)/k$ is smooth for each component $E$ of $B$, then the components of $B$ and the intersections are smooth over $\kappa(x)$, and therefore $(x\in X,B)$ is geometrically log smooth and the discrepancies will remain the same after base change to geometric case. So we may assume that $(x\in X,B)$ is an isolated (geometric) surface singularity. Pick a log resolution of singularity $f:Y\to X$ near $x$, say Exc$(x)=\sum\limits_iE_i$ is the sum of the exceptional divisors on $Y$, which is a scheme defined over $\kappa(x)$. Suppose $l$ is a purely inseparable field extension of $\kappa(x)$, then for any irreducible $\kappa(x)$-scheme $X$, $X\times_{\kappa(x)}l$ is homeomorphic to $X$ as topological spaces and hence also irreducible. Hence, we can take a finite separable field extension $k'$ of $\kappa(x)$ such that the irreducible components of $E_i\times_{\kappa(x)}k'$ are geometrically irreducible. Since $k'/\kappa$ is finite separable, and hence a single extension by a monic polynomial $g(t)\in \kappa[t]$. Suppose $X=Spec(R)$ with $\kappa=R/\mathfrak m$, take a monic lift $\tilde g(t)\in R[t]$ of $g(t)$, we have that \[X'=Spec(R[t]/\tilde g(t))\to Spec(R)=X\] is finite étale and surjective, with the corresponding $x'\to x$ realizing $\kappa\to k'$.  Then $Y'=Y\times_XX'$ is a minimal log resolution of singularity by étale descent, whose components of exceptional locus are exactly such components of $E_i\times_\kappa k'$ and hence geometrically irreducible. Also $X'$ is klt with the same discrepancies on certain components by étale descent, and hence has rational singularities. Hence, we have $g(E_i)=0$ and by the classification of the dual graph \cite[A.3]{sato2024generalhyperplanesectionslog}, we have $\dim_{\kappa}H^0(E_i,\mathcal O_{E_i})\leq 4$. Hence $K=H^0(E,\mathcal O_E)$ is separable over $\kappa(x)$ and hence over $k$ if $p>3$, which means that $E$ is geometrically reduced as $E$ is reduced$/K$ which can be realized as a conic in $\mathbb P_{K}^2$ and no purely inseparable field extension $L/K$ will make $E_L$ to be a $p$-power multiple of the certain divisor since $E_L=\mathcal O_{\mathbb P_L^2}(2)$ and $p>2$.

    Now $E_i$ are geometrically integral and their base change to $\bar \kappa(x)$ are integral curves with arithmetic genus $0$, hence $\mathbb P^1$, which are smooth. Moreover, we have $B_i$'s that are geometrically reduced (and hence integral by a finite étale base change) by the assumption, hence the exceptional locus together with the strict transform of $B$ is snc and consists of geometrically integral regular divisors, moreover the exceptional divisors are smooth. Consider each $B_i$, which is a geometrically integral regular curve over $k$. If some $B_i$ is not geometrically normal, denote $C_i$ to be the normalization of its geometric model, then we have for some integral divisor $D$ on $C_i$ and\[K_{C_i}+(p-1)D\sim_{\mathbb Q} K_{B_i}|_{C_i}.\] Count the degree, since $B_i$ is Gorenstein and by Riemann-Roch, we have \[2g_C-2+(p-1)deg(D)=2g_{B_i}-2\] and hence $g_{B_i}\geq\frac{p-1}{2}$. However, in our case $B_i$ is a component of the boundary of some log bounded family $(X,B)$, which means $g_{B_i}$ is bounded by some $G$, hence this cannot happen if $p>2G+2$ when $D\neq 0$. Hence,  assume $p>2G+2$, all $E_i$'s and $B_i$'s are smooth$/k$. View $Exc(x)+\tilde B$ as a scheme over $x=\Spec k$, then all the extension degrees of $H^0(F,\mathcal O_F)/k$ of any component $F\subset Exc(x)+\tilde B$ are bounded since $(X,B)$ belongs to a log bounded family$/\mathbb Z$, which means the defining equation of $(x\in X,B)$ has bounded degree, hence when $p$ is large enough, all intersections $E_i\cap E_j$ or $E_i\cap B_j$ are separable over $x\in X$. Hence $(Y,Exc(x)+\tilde B)$  is snc after arbitrary field base change and we see that all the log discrepancies would remain the same after base change to the algebraic closure and the geometrically.
        
    Returning to our initial fibration setting, we now show that $B_{\eta}$ is geometrically reduced. Suppose $B=B^{hor}+B^{ver}=\sum a_jD_j+B^{ver}$, then the vertical part does not affect the singularities of the general fibers. After restricting to the geometric generic fiber $F$ we have \[0\sim_{\mathbb Q} (K_X+B)|_F=K_F+\sum_ja_{j}p^{k_{D_j}}D_{j,F}^{red}\] where $p^{k_{D_j}}$ is the inseparable degree of (the Stein factorization of) the map $D_j\to Z$. Consider  the bounded flat family $(\mathcal X,\mathcal B)/\mathcal T$ over $\Spec \mathbb Z$ which contains $(X_\eta,B_\eta)$, we see $(X_\eta,B_\eta)$ is a fiber contained in $(\mathcal X_\eta,\mathcal B_\eta)/\mathcal T_\eta$, which is the fiber of $(\mathcal X,\mathcal B)/\mathcal T$ over the unique map $\eta\to\Spec \mathbb Z$. Hence, $(F,B_F)$ is a fiber of $(\mathcal X_{\bar\eta},\mathcal B_{\bar\eta})/\mathcal T_{\bar\eta}$, which also falls in the bounded family$/\Spec \mathbb Z$. $F$ is normal and Fano by our assumption. Apply our arguments above to $(X_\eta, 0)$, we see $F$ is also $\epsilon$-lc and hence $\mathbb Q$-factorial. Then since $(F,B_F)$ falls into a log bounded set, we see the isolated singularities $x\in F$ are of bounded type$/\Spec\mathbb Z$ by considering the local defining equations when $p$ is large enough. Since $F$ is $\mathbb Q$-factorial, the local Cartier index at $x\in F$ is bounded by some sufficiently large integer $N$. As a result, we have $\bar H\cdot D_{j,F}^\text{red}\geq \frac{1}{N!}$ as $N!D_{j,F}^\text{red}$ is Cartier, here $\bar H$ is the very ample. Also, we have $-\bar H\cdot K_F\leq M$ for some sufficiently large integer $M$. If there is some $k_{D_j}>0$, then \[M\geq -K_F\cdot \bar H\geq pa_jD_{j,F}^{red}\cdot\bar H\geq \frac{p\min(\Phi)}{N!},\]leading to a contradiction when $p>\frac{N!M}{\min(\Phi)}$. Thus when $p$ is large enough, we see there is no inseparable part both in the map from the exceptional divisors and the components of $B$ to $Z$, in particular the log discrpancies are kept and hence $(F,B_F)$ is geometrically lc and by \cite[0.2]{benozzocanonicalbundleformula}, the canonical bundle formula holds in this case.
\end{proof}

A direct corollary of the canonical bundle formula is that a contraction of a Fano type threefold is of Fano type if the base is not an elliptic curve (though the author believes  that this will not happen when the characteristic is large enough under the conditions of the following corollary). Explicitly speaking, we have:
\begin{corollary}
\label{contr}
       Assume $(X,B)$ be a $3$-dimensional projective lc pair with a contraction $f:X\to Z$, where $K_X+B\sim_{{\mathbb Q}}0/Z$. Let $R\subset [0,1]$ be a finite set of rational numbers and $\epsilon>0$ be a real number, then there exists a prime number $p_0$ depending only on $R$ and $\epsilon$ such that if $-(K_X+B)$ is nef, $X$ is of Fano type, char $k=p>p_0$, $B\in\Phi(R)$ and one of the following conditions holds:
    \begin{enumerate}
        \item $Z$ is a projective normal surface,
        \item $Z$ is a projective normal curve, $(X,B)$ is $\mathbb Q$-factorial dlt and $X_\eta$ is an $\epsilon$-lc Fano surface.
    \end{enumerate}
    Then $Z$ is of Fano type, except the only case when $X\to Z$ is a surface fibration over an elliptic curve and the canonical bundle formula is written as
    \[K_X+B\sim_\mathbb Qf^*K_Z\sim_\mathbb Q0.\]
    Here $B_Z=M_Z=0$, $(X,B)$ is not klt and not geometrically klt, $B^\text{ver}=0$ and each normalization $S^\nu$ of a lc place $S\in\lfloor B\rfloor$ is not of Fano type.
\end{corollary}
\begin{proof}
    Taking a dlt ${\mathbb Q}$-factorialization of $X$, we may assume $X$ is dlt ${\mathbb Q}$-factorial. When $Z$ is a surface, we have that the horizontal part of $B$ are separable over $Z$ when $p>p_0$ and $(X,B)$ has lc general fibers. $-(K_X+B)$ is semi-ample since $X$ is of Fano type. Hence if $-(K_X+B)\sim_{\mathbb Q}f^*D$ for some $D$ on $Z$, then $D$ is semi-ample. Let $(X,\Delta)$ be a klt pair such that $K_X+\Delta\sim_{\mathbb Q}0$, we see that $\Delta$ is big. Since the klt condition is an open condition \Cref{kltopen}, that is, for some sufficiently large $n$ such that $n\Delta$ is integral, there exists an open neighbourhood $V\subset |n\Delta|$ of $n\Delta$ such that for any $L\in V$, the pair $(X,\frac{1}{n}L)$ is klt.  Since $V$ is open, the base locus of $V$ is equal to the base locus of $|n\Delta|$. Suppose $E$ is a divisorial part of $\text{Bs}_\mathbb Q(\Delta)$, then $E$ is also a divisorial part of $\text{Bs}_\mathbb Q(B)$ since $\Delta-B=(K_X+\Delta)-(K_X+B)\sim_\mathbb Q-(K_X+B)$ is semi-ample. Hence we can enlarge $n$ to be sufficiently divisible such that $n\Delta\sim n(F+M)$ for some $F$ fixed and $M$ movable in $V$ and moreover $F_\eta\leq B_\eta$. In particular, when $n\Delta'\sim n\Delta$ varies inside $V$, $M_\eta$ varies base-point freely on the generic fiber. Hence there is some $M'$ induced by some $n\Delta'\in V$ such that \[\begin{aligned}(X_\eta,(1-\epsilon)B_\eta+\epsilon \Delta'_\eta)&=(X_\eta, (1-\epsilon)F_\eta+\epsilon F_\eta+(1-\epsilon)(B_\eta-F_\eta)+\epsilon M'_\eta)\\&=(X_\eta,B_\eta-\epsilon(B_\eta-F_\eta)+\epsilon M'_\eta)\end{aligned}\] is geometrically lc for sufficiently small $\epsilon$. Moreover we see that if the lc place $\lfloor B_\eta\rfloor\leq F_\eta$, then $\lfloor F\rfloor\neq 0$ which is a contradiction as $F\leq \Delta$ which has zero integral part. Hence $(X_\eta,B_\eta-\epsilon(B_\eta-F_\eta)+\epsilon M'_\eta)$ is klt for sufficiently small $\epsilon$. As $B^{\text{hor}}$ is separable over $Z$, so if $D\in \lfloor B_{\bar\eta}\rfloor$, then there exists some $D'\in \lfloor B_{\eta}\rfloor$ whose base change to the algebraic closure contains $D$. Consequently, $(X_\eta,B_\eta-\epsilon(B_\eta-F_\eta)+\epsilon M'_\eta)$ is geometrically klt.

    Now consider $\Gamma=(1-\epsilon)B+\epsilon\Delta'$ for some $\epsilon\ll 1$, we have that $(X,\Gamma)$ is klt and geometrically generically klt. Let $A$ be an ample divisor on $Z$, we see that for some $\zeta\ll 1$, there is an ample ${\mathbb Q}$-divisor $H\geq \zeta f^*A$ and some $\Delta''\sim_{\mathbb Q} \Delta$ such that $\Delta''=D+H$ for some effective ${\mathbb Q}$-divisor $D$. Thus take $\delta\ll 1$, set \[\Gamma':=(1-\epsilon)B+\epsilon(1-\delta)\Delta'+\epsilon\delta D+\epsilon\delta(H-\zeta f^*A)\sim_{\mathbb Q}\Gamma-\delta\epsilon\zeta f^*A.\]
    We have that $(X,\Gamma')$ is klt and geometrically generically klt when $\delta$ is small enough. Moreover, \[K_X+\Gamma'\sim_{{\mathbb Q},Z}(1-\epsilon)(K_X+B)+\epsilon(K_X+\Delta)\sim_{{\mathbb Q},Z}0.\]
    Hence the canonical bundle formula for curve fiber case would apply, that is we have some $b$-nef ${\mathbb Q}$-$b$-divisor $M_Z$ such that:
    \[K_X+\Gamma'\sim_{\mathbb Q}f^*(K_Z+\Gamma'_Z+M_Z)\sim_{\mathbb Q} -(1-\epsilon)f^*L-\delta\epsilon\zeta f^*A\]
    Thus take some $\eta\ll 1$, we have that 
    \[-(K_Z+\Gamma'_Z+M_Z+(1-\eta)\delta\epsilon\zeta A)\sim_{\mathbb Q}(1-\epsilon)L+\delta\epsilon\zeta\eta A.\]
    By construction, $(Z,\Gamma'_Z)$ is klt and $M_Z+(1-\eta)\delta\epsilon\zeta A$ is nef and big on some higher model. Thus passing to high resolutions of $X/Z$, there is some $\Omega\sim_{\mathbb Q}\Gamma'_Z+M_Z+(1-\eta)A$ on $Z$ such that $(Z,\Omega)$ is klt and $-(K_Z+\Omega)$ is nef and big since $L$ is semi-ample and $A$ is ample. In particular, $Z$ is of Fano type.

    When $Z$ is a curve, we first prove the case when $(X,B)$ is $\mathbb Q$-factorial dlt and $(X,B)/Z$ is of relative Picard number $1$. by the proof of \Cref{cbf1}, we see $(X,B)$ is geometrically generically lc and $B_\eta$ is geometrically reduced. If $(X,B)$ is geometrically generically klt, then the same construction in curve fiber case for $\Gamma=(1-\epsilon)B+\epsilon \Delta'$ and $\Gamma'$ would applies as $(X,\Gamma)$ is klt and geometrically generically klt for any $\Delta'\in V$ and arbitrary sufficiently small $\epsilon$. Moreover, by \cite[0.2]{benozzocanonicalbundleformula}, the canonical bundle formula for $(X,\Gamma')/Z$ will also hold and the same proof as the curve fiber case would apply. So we only need to prove the case when $(X,B)$ is not geometrically klt. Since $(X,B)$ is $\mathbb Q$-factorial dlt, there is some component $S\subset\lfloor B\rfloor$ which is horizontal$/Z$ and $S_\eta$ is a lc center in $(X_\eta,B_\eta)$. Since the finite part of the stein factorization of $S\to Z$ contains no inseparable part, by connectedness principle \cite[1.2]{YH17wittnadelvanishing}, the morphism $S\to Z$ is a contraction. Consider $(S^\nu,B_{S^\nu})/Z$ given by divisorial adjunction,  then $-(K_{S^\nu}+B_{S^\nu})=-(K_X+B)|_{S^\nu}$ is nef and relatively trivial over $Z$, and $B_{S^\nu}\in\Phi(J)$ for some finite set depending only on $R$. If $S^\nu$ is of Fano type, then consider the pair $(S^\nu,B_{S^\nu})/Z$ and apply the same method in the curve fiber case and we are done. So we assume $S$ is not of Fano type from now on.  
    
    Suppose $Z$ is not a curve of Fano type, i.e $Z\neq \mathbb P^1$ for contradiction. Then by the canonical bundle formula applied to $(X,B)/Z$, we see that $K_X+B\sim_\mathbb Qf^*(K_Z+B_Z+M_Z)$ which is $\mathbb Q$-linearly effective, however $-(K_X+B)$ is semi-ample, we must have $B_Z=M_Z=0$, $Z$ is an elliptic curve and $K_X+B\sim_\mathbb Q0$. $B_Z=0$ implies that $f^*z$ has multiplicity $1$ for every $z\in Z$ and that $B^\text{ver}=0$.
\end{proof}

\section{Boundedness of complements for a lc-trivial fibration}

In this chapter, we prove that the relative complements of Fano type fibrations of dimension $3$ admits bounded complements over any point on the base. We first prove that the boundedness of complements can be deduced from sub-crepant birational models.
    \begin{lemma}
    \label{subcrep}
        Let $(X',B' + M')$ with data $\phi:X\to X'$ and $M$ on $X$, and $(X',B'' + M'')$ be 2 generalized pairs. Assume (by replacing $X$ with a higher resolution), $\phi:X\to X',\psi:X\to X''$ be a common resolution such that $\psi_*M = M''$. Suppose further that \[\phi^*(K_{X'} + B' +M') + P = \psi^*(K_{X''} + B'' +M'')\] for some $P \geq 0$, then if $(X'',B'' +M'')$ has an $n$-complement, then so does $(X',B' +M')$.
    \end{lemma}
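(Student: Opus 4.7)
The plan is to pull back the $n$-complement from $X''$ to the common resolution $X$ and then push it forward to $X'$. First I would set $\tilde B$ on $X$ by the equality $K_X + \tilde B + M := \psi^*(K_{X''} + B''^+ + M'')$. Since $(X'', B''^+ + M'')$ is generalized lc and $n$-complementary, $\tilde B \leq 1$ in coefficients and $n(K_X + \tilde B + M) \sim 0$. The hypothesis $\phi^*(K_{X'} + B' + M') + P = \psi^*(K_{X''} + B'' + M'')$ together with $\psi^*(B''^+ - B'') \geq 0$ then lets me rewrite $\tilde B = B_{X'/X} + P + \psi^*(B''^+ - B'')$, where $B_{X'/X}$ is defined by $K_X + B_{X'/X} + M = \phi^*(K_{X'} + B' + M')$; the $b$-divisor $M$ plays the same role under both pullbacks by assumption.

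Next I would set $B'^+ := \phi_* \tilde B = B' + \phi_* P + \phi_* \psi^*(B''^+ - B'')$. Effectivity of $P$ and of $B''^+ - B''$ immediately yields $B'^+ \geq B'$. Pushing forward the linear equivalence $n(K_X + \tilde B + M) \sim 0$ under $\phi_*$ gives $n(K_{X'} + B'^+ + M') \sim 0$, so in particular $K_{X'} + B'^+ + M'$ is $\mathbb{Q}$-Cartier, and one may define $B_X^+$ on $X$ by $K_X + B_X^+ + M := \phi^*(K_{X'} + B'^+ + M')$.

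The main obstacle, as I see it, is verifying that $(X', B'^+ + M')$ is generalized lc. For this I would show $B_X^+ = \tilde B$. Their difference is $\phi$-exceptional since $\phi_* B_X^+ = B'^+ = \phi_* \tilde B$, and both $K_X + B_X^+ + M$ (a $\phi$-pullback) and $K_X + \tilde B + M$ (globally $\mathbb{Q}$-linearly trivial) are numerically trivial over $X'$ via $\phi$, so $B_X^+ - \tilde B \equiv 0/\phi$. Applying the negativity lemma to $B_X^+ - \tilde B$ and to its negative then forces $B_X^+ = \tilde B \leq 1$. Running the same argument on any higher log resolution above $X$ propagates the bound on coefficients to arbitrary models, yielding generalized lc of $(X', B'^+ + M')$.

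It remains to check the boundary inequality $nB'^+ \geq nT' + \lfloor (n+1)\Delta' \rfloor$ componentwise. For a prime divisor $D'$ on $X'$ with strict transform $D$ on $X$, the identity $B_X^+ = \tilde B$ gives $\mu_{D'}(B'^+) = \mu_D(\tilde B)$; when $D$ is not $\psi$-exceptional with image $D'' := \psi(D)$ this equals $\mu_{D''}(B''^+)$, and comparing coefficients at $D$ in the defining equation yields $\mu_{D''}(B'') = \mu_{D'}(B') + \mu_D(P) \geq \mu_{D'}(B')$. Splitting by whether $\mu_{D'}(B') < 1$ (where the hypothesis $nB''^+ \geq nT'' + \lfloor (n+1)\Delta'' \rfloor$ transfers by monotonicity of $\lfloor (n+1)\cdot \rfloor$) or $\mu_{D'}(B') = 1$ (in which case generalized lc of $(X'', B'' + M'')$ forces $\mu_D(P) = 0$, so the coefficients at $D''$ agree with those at $D'$), the desired inequality at $D'$ follows from the corresponding inequality at $D''$. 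This completes the construction of the required $n$-complement on $X'$.
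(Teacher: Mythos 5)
Your construction is essentially the paper's: you take $B'^{+}=B'+\phi_*(P+\psi^*(B''^{+}-B''))$, identify $\phi^*(K_{X'}+B'^{+}+M')$ with $\psi^*(K_{X''}+B''^{+}+M'')$, and deduce $n(K_{X'}+B'^{+}+M')\sim 0$ and generalized lc-ness from the corresponding properties on $X''$ via the common resolution. You are in fact more careful than the paper, which neither verifies $\mathbb Q$-Cartierness of $K_{X'}+B'^{+}+M'$ nor the coefficient condition $nB'^{+}\geq nT'+\lfloor(n+1)\Delta'\rfloor$ from the definition of an $n$-complement; your negativity-lemma argument showing $B_X^{+}=\tilde B$ is correct and is exactly what is implicitly needed to justify the paper's chain of $\phi^*\phi_*$ manipulations.

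The one genuine gap is in your verification of $nB'^{+}\geq nT'+\lfloor(n+1)\Delta'\rfloor$: you only treat prime divisors $D'$ on $X'$ whose strict transform $D$ on $X$ is not $\psi$-exceptional. The case where $D$ is $\psi$-exceptional, i.e.\ $D'$ has no divisorial image on $X''$, is precisely the situation arising in the paper's applications, where the lemma is invoked to transfer complements across MMP steps that contract divisors, so it cannot be skipped. It is, however, fixable with the data you already have: if $\mu_{D'}(B')=1$ then $\mu_D(\tilde B)\geq\mu_D(B_{X'/X})=\mu_{D'}(B')=1$, hence $\mu_{D'}(B'^{+})=1$ by generalized lc-ness; if $b:=\mu_{D'}(B')<1$, observe that $n\tilde B$ is an integral divisor (since $n(K_{X''}+B''^{+}+M'')\sim 0$ near the relevant fiber and $nM$ is b-Cartier, both parts of the definition of an $n$-complement), so from $\mu_D(\tilde B)\geq b$ and $n\mu_D(\tilde B)\in\mathbb Z$ one gets $n\mu_{D'}(B'^{+})=n\mu_D(\tilde B)\geq\lceil nb\rceil\geq\lfloor(n+1)b\rfloor$. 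This argument does not use the image of $D$ on $X''$ at all, so it also subsumes your non-exceptional case and lets you drop the case division entirely.
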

    \begin{proof}
        Let $B''^+\geq B''$ be an $n$-complement for $(X'',B'' + M'')$. Consider \[B'^+ := B' + \phi_*(P + \psi^*(B''^+-B'')),\] then we get \[\begin{aligned}&n\phi^*(K_{X'} + B'^+ +M') = n\phi^*(K_{X'} + B' +M'+\phi_*(P + \psi^*(B''^+- B'') )\\&=n\phi^*(K_{X'} + B' +M'+\phi_*(\psi^*(K_{X''} + B'' +M'') + \psi^*(B''^+- B'')- \phi^*(K_{X'} + B' +M'))\\&=n(\phi^*(K_{X'} + B' +M')-\phi^*\phi_*\phi^*(K_{X'} + B' +M')+\phi^*\phi_*\psi^*(K_{X''} + B''^+ +M''))\\&=n\phi^*\phi_*\psi^*(K_{X''} + B''^+ +M'')\sim 0\end{aligned}\] by assumption. Hence, we obtain $n(K_{X'} + B'^+ +M')\sim0$. Moreover, $(X, B'^+ +M')$ is generalized lc  beacuase $(X'',B''^+ +M'')$ is generalized lc and $X$ is a common log resolution.
    \end{proof}
This lemma implies that the boundedness of complements is preserved under some important operations. If $(Y,B_Y+M_Y)\to (X,B+M)$ is a dlt ${\mathbb Q}$-factorialization, and if  $K_Y+B_Y+M_Y$ admits an $n$-complement, then $K_X+B+M$ will also admit an $n$-complement. If $X\dashrightarrow  X'$ is a partial step of $-(K_X+B+M)$-MMP, and if $K_{X'}+B'+M'$ admits an $n$-complement, then  $K_X+B+M$ will also admit an $n$-complement.

Now we give the lemma of effective canonical bundle formulas for Fano type fibrations which admit a bounded relative complement.
\begin{lemma}
\label{effcbf}
    Let $ R\subset [0,1]$ be a finite set of rational numbers, then there is $p_0>5,q\in\mathbb N$ and a finite set of rationals $ S\subset[0,1]$ depending only on $ R$ and satisfying the following. Assume $(X,B)$ is a pair and $f:X\to Z$ is a contraction such that:
    \begin{enumerate}
        \item $(X,B)$ is a projective lc pair of dimension $3$ over an algebraically closed field with char $p>p_0$,
        \item  $B\in\Phi( R)$,
        \item $K_X+B\sim_{\mathbb Q} 0/Z$,
        \item $X$ is of Fano type over $Z$,
        \item the canonical bundle formula \Cref{cbff} holds for $(X,B)/Z$,
        \item \Cref{cplt} holds for $(X,B)/Z$.
    \end{enumerate}
    Then we have $B_Z\in \Phi( S)$ and $qM_Z$ is b-nef b-Cartier and the effective canonical bundle formula:
    \[q(K_X+B)\sim qf^*(K_Z+B_Z+M_Z).\]
\end{lemma}
\begin{proof}
    Let $q=n$ be the number given by \Cref{cplt} which depends only on $ R$. There a $q$-complement $K_X+B^+$ of $K_X+B$ over some point $z\in Z$ with $B^+\geq B$. Since over $z$ we have $K_X+B\sim_{\mathbb Q}0$ and $q(K_X+B^+)\sim 0$, we have $B^+=B$ over $z$, therefore $q(K_X+B)\sim 0$ over the generic point of $Z$. Hence there is a rational function on $X$ such that $qL:=q(K_X+B)+div(\alpha)$ is zero over $\eta_Z$. Hence $L$ is vertical over $Z$ and $L\sim_{\mathbb Q} 0/Z$, we have $L=f^*L_Z$ for some $L_Z$ on $Z$, let the moduli part $M_Z:=L_Z-(K_Z+B_Z)$ where $B_Z$ is the discriminant part for $(X,B)/Z$. We have \[q(K_X+B)\sim qL=qf^*L_Z=qf^*(K_Z+B_Z+M_Z).\]

    In the following two paragraphs, we first reduce the proof that $B_{Z}\in\Phi(S)$ and that $qM_{Z}$ is integral to the case where the base is a curve. Assume dim $Z>1$, let $H$ be a general very ample divisor of $Z$ and $G$ its pullback to $X$, let \[K_G+B_G=(K_X+B+G)|_G.\] By adjunction $B_G\in\Phi(R')$ for some finite set $R'$ of rational numbers and $(G,B_G)$ is lc. Since $G\to H$ is at most with curve fibers and $G$ is of Fano type over $H$ by adjunction, the canonical bundle formula applies for $(G,B_G)/H$ by possibly enlarging $p_0$ by \Cref{cbf}. Let $g:G\to H$ be the induced map. Let $D$ be a prime divisor on $Z$ and $C$ be a component of $D\cap H$. Let $t:=\lct_{\eta(D)}(f^*D,X,B)$. Then there is a non-klt center of $(X,B+tf^*D)$ mapping onto $D$, which is also a non-klt center of $(X,B+G+tf^*D)$. Intersecting it with $G$ gives a non-klt center of $(G,B_G+tg^*C)$ mapping onto $C$ by inversion of adjunction. Thus $t=\lct_{\eta(C)}(g^*C,G,B_G)$, therefore $\mu_D(B_Z)=\mu_C(B_H)$. Since $B_G\in \Phi(R')$, by induction hypothesis, $B_H\in\Phi(S)$ for some finite set $S$ depending only on $R'$ and hence $R$. Therefore $B_Z\in\Phi(S)$.
    
    Now we are going to show $qM_Z$ is integral. Pick a general $H'\sim H$ and set $K_H:=K_Z+H'|_H$ as a Weil divisor, let $M_H:=(L_Z+H')|_H-(K_H+B_H)$ we have \[\begin{aligned}q(K_G+B_G)&\sim q(K_X+B+G)|_G\sim q(L+G)|_G\sim qf^*(K_Z+B_Z+M_Z+H')|_G\\&\sim qg^*(L_Z+H')|_H=qg^*(K_H+B_H+M_H)\end{aligned}\]
    Hence $M_H$ is the moduli part of $(G,B_G)/H$ and $B_H+M_H=(B_Z+M_Z)|_H$. Hence $\mu_C(B_H+M_H)=\mu_D(B_Z+M_Z)$, which means that $\mu_CM_H=\mu_DM_Z$ as $\mu_CB_H=\mu_DB_Z$. By induction hypothesis, $qM_H$ is integral, which means that $qM_Z$ is integral as desired. So we assume $Z$ is a projective normal curve now.
    
    We now show that $B_Z\in\Phi(S)$. Pick a closed point $z\in Z$, let $t=lct(X,B,f^*z)$ and $\Gamma=B+tf^*z$, let $(X',\Gamma')$ be a ${\mathbb Q}$-factorial dlt model of $(X,\Gamma)$ such that $\lfloor \Gamma'\rfloor$ has a component mapping to $z$. Then there is a boundary $B'\leq \Gamma'$ such that $B'\in\Phi( R)$ and $\lfloor B'\rfloor$ has a component map to $z$ and $B^\sim\leq B'$. Run MMP on $-(K_{X'}+B')\sim_{\mathbb Q} \Gamma'-B'$ over $Z$ and let $X''$ be the final model, then $X''$ is of Fano type over $Z$, $B^\sim\leq B''\in\Phi(R)$ and $-(K_{X''}+B'')$ is nef over $Z$ and $(X'',B'')$ is lc. Thus $K_{X''}+B''$ has a $q$-complement $K_{X''}+B''^+$ over $z$ with $B''^+\geq B''$. So $K_{X'}+B'$ admits a $q$-complement $(X',B'^+)$ with $B'^+\geq B'$ over $z$. Pushing $K_{X'}+B'^+$ down to $X$ we get a $q$-complement $K_X+B^+$ of $K_X+B$ over $z$ with $B^+\geq B$ such that $(X,B^+)$ has a non-klt center mapping to $z$. Since $K_X+B\sim_{\mathbb Q} 0/Z$ we have $B^+-B\sim_{\mathbb Q}0$ over $z$, hence $B^+-B$ is vertical effective over $Z$. Thus over $z$, $B'-B=sf^*z$. $s=t$ since a non-klt center of $(X,B^+)$ maps to $z$. We see the coefficient of $z$ in $B_Z$ is $1-t$, so we only need to show that $t$ falls in an ACC set. Let $S$ be a component of $f^*z$ and $b,b^+$ be its coefficients in $B$ and $B^+$. If $m$ is its multiplicity in $f^*z$, we have $b^+=b+tm$, so $t=\frac{b^+-b}{m}$. We see $b=1-\frac{r}{l}$ for some $r\in R$ and $l\in\mathbb N$, so $t=\frac{s}{m}$ where $s=b^+-1+\frac{r}{l}$. We see $b^+\leq 1$ and $qb^+$ is integral, we get \[1-\frac{1}{l}\leq b=1-\frac{r}{l}\leq b^+\leq 1-\frac{1}{q}\] or $b^+=1$, so $l\leq q$ and hence $s$ falls in a finite set of rational numbers, so $B_Z\in\Phi( S)$ for some finite set $ S$ of rational numbers.

    We now show that $qM_Z$ is integral. By $q(K_X+B)\sim qf^*L_Z$, we have $q(K_X+B)\sim 0$ over some non-empty open set $U\subset Z$ and $\text{Supp}(B_Z)\subset Z-U$. Set \[\Theta:=B+\sum\limits_{z\in Z-U}\text{lct}(X,B,f^*z)f^*z\] and $\Theta_Z$ to be the discriminant part of $(X,\Theta)/Z$, we have \[\Theta_Z=B_Z+\sum\limits_{z\in Z-U}\text{lct}(X,B,f^*z)z\] and Hence $\Theta_Z$ is a reduced divisor. Moreover we see that $K_X+\Theta$ is a $q$-complement of $K_X+B$ over each point $z\in Z-U$ by last step and hence $q(K_X+\Theta)\sim0/Z$. Therefore we have \[\begin{aligned}&q(K_X+\Theta)=q(K_X+B)+q(\Theta-B)\\&\sim qf^*(K_Z+B_Z+M_Z)+qf^*(\Theta_Z-B_Z)= qf^*(K_Z+\Theta_Z+M_Z)\end{aligned}.\] Since $q(K_{Z}+\Theta_{Z}+M_{Z})$ is Cartier and $K_{Z}+\Theta_{Z}$ is integral, it follows that $qM_{Z}$ is integral.

    In the following paragraphs, we show that $qM_Z$  is  a b-nef b-Cartier divisor. The b-nefness is ensured by the canonical bundle formula, so we only need to prove that $qM_Z$ is b-Cartier. We go back to general case that $Z$ may not be a curve and try to construct a birational model of $X'$ to make $qM_Z$ Cartier. Let $\phi:X'\to X$ be a log resolution of $(X,B)$ so that $X'\to Z'$ is a morphism and $Z'\to Z$ is a high resolution. Let $U_0\subset Z$ be the locus of $Z$ over which $Z'\to Z$ is an isomorphism.   Let $\Delta'$ be the sum of the strict transformations of $B$ and the reduced exceptional divisors of $X'\to X$ but with all components mapping outside $U_0$ removed. Let $U'_0\subset Z'$ be the preimage of $U_0$. Run an MMP on $K_{X'}+\Delta'$ over $Z'\times_Z X$ with some scaling, we end with a model $X''$ such that over $U'_0$ the pair $(X'',\Delta'')$ is a ${\mathbb Q}$-factorial dlt model of $(X,B)$ by the construction and $U'_0\times_{U_0}X|_{U_0}\simeq X|_{U_0}$. Hence $K_{X''}+\Delta''\sim_{\mathbb Q} f^*(K_X+B)\sim_{\mathbb Q} 0$ over $U_0'$ and $X''$ is of Fano type over $U_0'$. Run $K_{X''}+\Delta''$-MMP$/Z$ we end with a good minimal model over $Z'$ since every non-klt center of $(X'',\Delta'')$ is mapped into $U_0'$ and $X''$ is of Fano type over $U_0'$. We denote the final model again by $(X'',\Delta'')/Z'$ for convenience.
    
    Now we are going to show $qM_Z'$ is nef Cartier. We only need to show $qM_Z'$ is integral since $Z'$ is smooth. Let $f'':X''\to Z''/Z'$ be the Iitaka fibration of $K_{X''}+\Delta''$. On a common resolution $W$ of $(X,B)$ and $(X'',\Delta'')$. Then the pullback of $K_X+B$ and $K_{X''}+\Delta''$ will coincide over $U_0'$. Let $K_{X''}+B''$ and $L''$ be the pushdown from $W$ of the pullback of $K_X+B$ and $L$ respectively.  Let $U_0''\subset Z''$ be the preimage of $U_0'$, set $P'':=\Delta''- B''$ which is vertical and $\sim _{\mathbb Q} 0$ over $Z''$ since it is $0$ over $U_0''$, hence $P''=f''^*P_{Z''}$ for some ${\mathbb Q}$-Cartier ${\mathbb Q}$-divisor $P_{Z''}$. Denote $\Delta_{Z''}$ and $B_{Z''}$ to be the discriminant part of $(X'',\Delta'')/Z''$ and $(X'',B'')/Z''$ respectively, we see $\Delta_{Z''}=B_{Z''}+P_{Z''}$ by the construction of $P''$. Moreover  we have that \[\begin{aligned}q(K_{X''}+\Delta'')&=q(K_{X''}+B''+P'')\sim q(L''+P'')\\&=qf''^*(L_{Z''}+P_{Z''})= qf''^*(K_{Z''}+\Delta_{Z''}+M_{Z''})\end{aligned}.\] Here $L_{Z''}$ is the pullback of $L_Z$ to $Z''$ and $M_{Z''}$ is the moduli part of $(X'',\Delta'')/Z''$ and $(X'',B'')/Z''$ or equivalent saying $(X,B)/Z$.  The same arguments in the paragraphs above applied to $(X'',\Delta'')/Z''$, we see $qM_{Z''}$ is integral. So $qM_{Z'}$ is integral as well and we are done. 
    \end{proof}

Now we state an essential proposition which permits one to perturb the coefficients of the boundary which are close to $1$. The proof is the same as the characteristic $0$ case \cite[2.50]{birkar2019antipluricanonicalsystemsfanovarieties} since ACC for log canonical threshold in dimension $3$ and global ACC in dimension $2$ are known in characteristic $p >5$ \cite[1.10]{birkar2014existenceflipsminimalmodels}.
\begin{proposition}
\label{coe1}
    Let $\Phi\subset[0,1]$ be a DCC set, there is $\epsilon>0$ depending only on $\Phi$ satisfying the following. Let $(X,B)$ be a projective threefold pair with a contraction $X\to Z$ with $\dim Z>0$ such that:
    \begin{enumerate}
    \item $\chara k>5$
        \item $B\in\Phi\cup(1-\epsilon,1]$,
        \item $-(K_X+B)$ is a nef$/Z$ ${\mathbb Q}$-divisors,
        \item there is 
        \[0\leq P\sim_{\mathbb Q}-(K_X+B)/Z\]
        such that $(X, B+P)$ is generalized lc, and 
        \item $X$ is ${\mathbb Q}$-factorial and of Fano type over $Z$.
    \end{enumerate}
    Then let
    \[\Theta:=B^{\leq 1-\epsilon}+\lceil B^{>(1-\epsilon)}\rceil,\] run an $-(K_X+\Theta)$-MMP  over $Z$ and let $X'$ be the resulting model, then
    \begin{enumerate}
        \item $(X,\Theta)$ is lc, so is $(X',\Theta')$,
        \item the MMP does not contract any component of $\lfloor\Theta'\rfloor$,
        \item $-(K_{X'}+\Theta')$ is nef$/Z$.
    \end{enumerate}
\end{proposition}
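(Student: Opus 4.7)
The plan is to adapt \cite[2.50]{birkar2019antipluricanonicalsystemsfanovarieties} to positive characteristic, replacing each ingredient by its dimension-$3$ counterpart available in $\chara k>5$. The three assertions will be established in the order (1), (3), (2), with (2) being the main obstacle.

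First, I would verify (1) by a contradiction argument using ACC for log canonical thresholds. Suppose there were a sequence of counterexamples $(X_i,B_i,P_i)$ with $\epsilon_i\to 0$ and $(X_i,\Theta_i)$ not lc. Then $\Theta_i-B_i$ is effective with support in $\lfloor\Theta_i\rfloor$ and coefficients in $[0,\epsilon_i)$, while $(X_i,B_i+P_i)$ is generalised lc and $B_i\leq \Theta_i\leq B_i+P_i$ in coefficients along $\lfloor\Theta_i\rfloor$ (after absorbing the small difference into $P_i$ via $\mathbb{Q}$-linear equivalence). The failure of lc at $\Theta_i$ would therefore produce a sequence of lct values accumulating strictly from below, contradicting ACC for lct in dimension $3$ over $\chara k>5$ \cite[1.10]{birkar2014existenceflipsminimalmodels}. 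This fixes the uniform $\epsilon=\epsilon(\Phi)$.

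Next, since $X$ is of Fano type$/Z$, the perturbation trick recalled in the Preliminaries reduces any $\mathbb{Q}$-divisor MMP$/Z$ on $X$ to a klt MMP$/Z$, which exists and terminates in dimension $3$ over $\chara k>5$ by \cite{birkar2014existenceflipsminimalmodels,birkar2014existencemorifibrespaces}. Running this MMP on $-(K_X+\Theta)$ produces a model $X'$ with $-(K_{X'}+\Theta')$ nef$/Z$; together with preservation of the lc property under MMP, this yields (3) and the lc part of (1) for $(X',\Theta')$.

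The main obstacle is (2), that no component of $\lfloor\Theta\rfloor$ is contracted. Let $R$ be an extremal ray contracted at an intermediate step. Since $(K+\Theta)\cdot R>0$ while $(K+B)\cdot R\leq 0$ by nefness, we have $(\Theta-B)\cdot R>0$, so some component $D$ of $\lfloor\Theta\rfloor$ satisfies $D\cdot R>0$. If a component $S\subset\lfloor\Theta\rfloor$ were contracted (as the exceptional divisor of a divisorial step, or as an irreducible component of the flipping locus of a small contraction), I would apply divisorial adjunction \ref{divadj} to write $K_{S^\nu}+\Theta_{S^\nu}=(K_X+\Theta)|_{S^\nu}$ with coefficients of $\Theta_{S^\nu}$ lying in a DCC set depending only on $\Phi$ (via the discussion of $\mathfrak{S}_{\Phi(R)}$ after \ref{divadj}), then restrict the contraction of $R$ to $S^\nu$ and combine the cone theorem for Fano type surfaces in $\chara k>5$ with global ACC in dimension $2$ \cite[1.10]{birkar2014existenceflipsminimalmodels} to derive a contradiction provided $\epsilon$ is sufficiently small in terms of $\Phi$. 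This length-of-extremal-ray analysis on $S^\nu$ is the principal difficulty, because the perturbation $\Theta-B$ has to be bounded precisely enough against the discrete set of possible intersection numbers on $S^\nu$; all the other ingredients are essentially formal once ACC in the relevant dimension and the MMP in $\chara k>5$ are in hand.
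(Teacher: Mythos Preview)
Your proposal is correct and matches the paper's approach: the paper gives no detailed proof but simply refers to \cite[2.50]{birkar2019antipluricanonicalsystemsfanovarieties} with the remark that the two needed inputs---ACC for lct in dimension $3$ and global ACC in dimension $2$---are known in $\chara k>5$ by \cite[1.10]{birkar2014existenceflipsminimalmodels}, exactly the tools you invoke for (1) and (2). One minor gap in your outline: termination of the $-(K_X+\Theta)$-MMP alone does not yield (3), since $-(K_X+\Theta)\sim_{\mathbb Q}P-(\Theta-B)/Z$ need not be effective and the run could a priori end in a Mori fibre space; in Birkar's argument this is excluded by the same global-ACC restriction-to-a-general-fibre mechanism you sketch for (2), so (2) and (3) are really handled together rather than in your order (1),(3),(2).
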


Now we deal with the pl-contractions case which is what we want to reduce to.
\begin{lemma}
\label{plrcomp}
    Assume $(X,B)$ is a lc pair of threefolds and $f:X\to Z$ is a contraction with  $3>\dim Z>0$. Let $R$ be a finite set of rationals, then there is a natural number $n=n(R)$ such that suppose:
    \begin{enumerate}
        \item $\chara k>5$,
        \item $B\in R$,
        \item $X$ is of Fano type$/Z$,
        \item there is some ${\mathbb Q}$-divisor $\Gamma\in R$ such that $(X,\Gamma)$ is ${\mathbb Q}$-factorial plt,
        \item $S:=\lfloor \Gamma\rfloor$ is irreducible and a component of $\lfloor B\rfloor$ which is vertical$/Z$,
        \item $S$ intersect the fiber of $X/Z$ over some point $z\in Z$
        \item $-(K_X+\Gamma)$ is ample$/Z$,
        \item $-(K_X+B)$ is nef$/Z$, and
        \item $-S$ is nef$/Z$ if $Z$ is a curve.
    \end{enumerate}
    Then for such $z\in Z$, there is an $n$-complement $(X,B^+)$ of $(X,B)$ over $z$ with $B^+\geq B$.
\end{lemma}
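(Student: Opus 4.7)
The plan is to perform divisorial adjunction along $S$, solve the two-dimensional complement problem on $(S,B_S)$, and then lift the complement back to $X$ via the pl-vanishing Lemma~\ref{van}.

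Since $(X,\Gamma)$ is $\mathbb Q$-factorial plt with $\lfloor\Gamma\rfloor=S$, Theorem~\ref{divadj2} gives that $S$ is normal. Set $K_S+B_S:=(K_X+B)|_S$ and $K_S+\Gamma_S:=(K_X+\Gamma)|_S$ by divisorial adjunction. Theorem~\ref{divadj} yields $B_S\in\Phi(R_1)$ for a finite $R_1=R_1(R)$, along with $(S,B_S)$ lc and $(S,\Gamma_S)$ klt. Because $-(K_X+\Gamma)$ is ample and $-(K_X+B)$ is nef over $Z$, restricting yields $-(K_S+\Gamma_S)$ ample and $-(K_S+B_S)$ nef over $f(S)$, so $S$ is of Fano type over $f(S)$. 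Assumption~(6) places $z\in f(S)$, and since boundedness of complements for log Fano type surface fibrations holds in arbitrary characteristic, courtesy of BAB in dimension two \cite{boundednessandk2}, one obtains $n_1=n_1(R)$ and an $n_1$-complement $(S,B_S^+)$ of $(S,B_S)$ over $z$ with $B_S^+\geq B_S$.

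To lift, let $n$ be a sufficiently divisible multiple of $n_1$ and a common denominator of $R$, fix a small rational $\epsilon>0$, and put $\Lambda:=(1-\epsilon)B+\epsilon\Gamma$. A convex combination of log discrepancies shows $(X,\Lambda)$ is $\mathbb Q$-factorial plt with $\lfloor\Lambda\rfloor=S$, and
\[
-(K_X+\Lambda)=-(1-\epsilon)(K_X+B)-\epsilon(K_X+\Gamma)
\]
is ample over $Z$. Computing,
\[
-n(K_X+B)-S\sim K_X+\Lambda-S+L,\qquad L=-(n+1-\epsilon)(K_X+B)-\epsilon(K_X+\Gamma),
\]
where $L$ is ample over $Z$. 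On a log resolution $\phi:X'\to X$ of $(X,\Gamma+B)$, taking $\Gamma_{\mathrm{van}}=\Lambda$ in Lemma~\ref{van} (assumption~(8) supplies $-S$ nef over $Z$) gives $R^1f_*\mathcal O_X(-n(K_X+B)-S)=0$ near $z$. Pushing forward the adjunction exact sequence
\[
0\to\mathcal O_X(-n(K_X+B)-S)\to\mathcal O_X(-n(K_X+B))\to\mathcal O_S(-n(K_S+B_S))\to 0
\]
then yields a surjection of sections near $z$, and the effective divisor $n(B_S^+-B_S)\sim -n(K_S+B_S)$ lifts to an effective divisor $D\sim -n(K_X+B)$ over a neighbourhood of $z$ with $D|_S=n(B_S^+-B_S)$. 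Put $B^+:=B+\tfrac1nD$, so that $B^+\geq B$, $n(K_X+B^+)\sim 0$ over $z$, and $(X,B^+)$ is plt near $S$ by inversion of adjunction.

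The main obstacle I anticipate is promoting plt-near-$S$ to lc over the entire neighbourhood of $z$. To address it, I would use the freedom in the lift: the preimage of the distinguished section is a torsor under $f_*\mathcal O_X(-n(K_X+B)-S)$, sufficiently rich because $-n(K_X+B)$ is semiample over $Z$ by the base-point free theorem for Fano type threefolds in $\chara>5$ \cite[1.2]{birkar2014existencemorifibrespaces}. A generic choice of $D$ then avoids producing new non-klt centers outside $S$; should this still be insufficient, running a suitable MMP on $(X,B^+)$ over $z$ in $\chara>5$ \cite{birkar2014existenceflipsminimalmodels} combined with Lemma~\ref{subcrep} recovers an $n$-complement of the original pair.
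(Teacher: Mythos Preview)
Your overall architecture---restrict to $S$, solve the surface problem, lift via pl-vanishing---matches the paper's. There are, however, two genuine gaps.

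First, the ``adjunction exact sequence'' you write on $X$ is not justified. The divisor $-n(K_X+B)$ is integral Weil, but you have no control over its Cartier index ($X$ is only $\mathbb Q$-factorial), so tensoring $0\to\mathcal O_X(-S)\to\mathcal O_X\to\mathcal O_S\to 0$ by $\mathcal O_X(-n(K_X+B))$ need not stay exact, and the restriction $\mathcal O_X(-n(K_X+B))|_S$ need not coincide with $\mathcal O_S(-n(K_S+B_S))$. The paper does not attempt this on $X$: it passes to the log resolution $\phi:X'\to X$, builds the integral divisor $L':=-nK_{X'}-nT'-\lfloor(n+1)\Delta'\rfloor$ together with an exceptional correction $P'$, checks $(L'+P')|_{S'}\sim G_{S'}\geq 0$, applies Lemma~\ref{van} to $L'+P'-S'$, lifts $G_{S'}$ upstairs, and only then pushes down. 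The rounding and the term $P'$ are precisely what make the restriction effective and the sequence honest; your shortcut bypasses this.

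Second, and more seriously, your endgame for lc-ness over the whole fibre does not work, and this is where the key idea is missing. You cannot choose $D$ generically: the lift is pinned on $S$ to $n(B_S^+-B_S)$, and the torsor of lifts (sections vanishing on $S$) gives no Bertini-type control of $(X,B+\tfrac1nD)$ in positive characteristic. The MMP suggestion is unrelated to proving lc-ness of a fixed pair. The paper's argument is the connectedness principle: set $\Omega:=aB^++(1-a)\Gamma$ with $a<1$ close to $1$, so $-(K_X+\Omega)$ is ample over $Z$; if $(X,B^+)$ were not lc somewhere over $z$, then the non-klt locus of $(X,\Omega)$ near the fibre would have a component disjoint from $S$, contradicting connectedness. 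That is the missing tool. (A minor correction: inversion of adjunction only yields $(X,B^+)$ \emph{lc} near $S$, not plt, since $(S,B_S^+)$ is merely lc.)
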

\begin{proof}
    Since the statement is local, we may shrink near $f(S)$. If $-S$ is nef$/Z$, by \cite[1.1]{Bernasconiplvanishing} we see $R^if_*\mathcal O_X(-S)=0$ for $i>0$ since $-S-(K_X+\Gamma)$ is $f$-ample. If $Z$ is a surface but $-S$ is not necessarily nef$/Z$, then the same proof of \cite[2.2]{Bernasconiplvanishing} and \cite[2.8]{Bernasconibasepointfreeklt} apply similarly. That is, for any Weil divisor $L$ on $X$ such that $L-(K_X+\Gamma)$ is $f$-ample, we have $R^1f_*\mathcal O_X(L-S)=0$. So we set $L=\mathcal O_X$ here and again we see $R^1f_*\mathcal O_X(-S)=0$. We have the exact sequence 
    \[\mathcal O_Z=f_*\mathcal O_X\to f_*\mathcal O_S\to R^1f_*\mathcal O_X(-S),\] hence $\mathcal O_Z\to f_*\mathcal O_S$  is surjective. Let $\pi:V\to f(S)\to Z$ be the finite part of the Stein factorization of $S\to Z$, we have that:
    \[\mathcal O_Z\to\mathcal O_{f(S)}\to \pi_*\mathcal O_V=f_*\mathcal O_S\] is surjective, so $S\to f(S)$ is a contraction. 
    
    The aim of the following proof is to use the relative Kawamata-Viehweg vanishing to lift sections from $S$ to $X$. Suppose $\phi:X'\to X$ is a log resolution of $(X,B)$, let $S'$ be the birational transform of $S$ with the natural morphism $\psi:S'\to S$. Write \[K_S+B_S:=(K_X+B)|_S\] by divisorial adjunction, hence $(S,B_S)$ is klt and $B_S\in \Phi(S)$ for some finite set $S\subset [0,1]$ of rational numbers. $S$ is of Fano type, so by induction hypothesis, $(S,B_S)$ admits an $n$-complement $(S,B_S^+)$ over $z$ with $B_S^+>B_S$. Replacing $n$ by a multiple we may assume $nB$ and $n\Gamma$ are integral. We set \[N':=-(K_{X'}+B'):=-\phi^*(K_X+B),\quad S'\in T':=\lfloor B'^{\geq 0}\rfloor, \quad\Delta':=B'-T'.\] Define the integral divisor \[L':=-nK_{X'}-nT'-\lfloor(n+1)\Delta'\rfloor=n\Delta'-\lfloor(n+1)\Delta'\rfloor+nN'.\] Set \[K_{X'}+\Gamma':=\phi^*(K_X+\Gamma).\] Replace $\Gamma$ with $(1-a)\Gamma+aB$ for some $a<1$ sufficiently closed to $1$, we may assume $\Gamma'-B'$ has sufficiently small coefficients. Set \[P':=\sum\limits_{D'\text{ prime, }D'\neq S'}-\mu_{D'}(\lfloor\Gamma'+n\Delta'-\lfloor(n+1)\Delta'\rfloor\rfloor) D',\] to be a divisor on $X'$ and \[\Lambda':=\Gamma'+n\Delta'-\lfloor(n+1)\Delta'\rfloor+P'.\] 
    We claim that $P'\in\{0,1\}$, exceptional over $X$, and that $(X,\Lambda')$ is plt with $\lfloor \Lambda'\rfloor=S'$. In fact, let $D'$ be a component of $P'$, if it is a component of $T'$, then it is not a component of $\Delta'$ and hence $-\mu_{D'}(\lfloor\Gamma'+n\Delta'-\lfloor(n+1)\Delta'\rfloor\rfloor)=-\mu_{D'}(\lfloor \Gamma'\rfloor)=0$ as $D'\neq S'$. If it is not a component of $T$, then $\mu_{D'}(\lfloor\Gamma'+n\Delta'-\lfloor(n+1)\Delta'\rfloor\rfloor)=\mu_{D'}(\lfloor \Gamma'-\Delta'+\{(n+1)\Delta'\}\rfloor)=\mu_{D'}(\lfloor \Gamma'-B'+\{(n+1)\Delta'\}\rfloor)$, here $\{b\}$ means the fractional part of the real number $b$. Since $\Gamma'-B'$ has sufficiently small coefficients, we see $\Gamma'-B'+\{(n+1)\Delta'\}\in[-1,1)$, which means that $-\mu_{D'}(\lfloor \Gamma'-B'+\{(n+1)\Delta'\}\rfloor)\in\{0,1\}$ and hence $P'\in\{0,1\}$. Now we are going to show $P'$ is exceptional. In fact if $D'$ is a non exceptional component in $P'$, denote its image on $X$ to be $D$ then $\mu_{D'}P'=-\mu_D(\lfloor \Gamma-\Delta+\{(n+1)\Delta\})=-\mu_D(\lfloor \Gamma\rfloor)=0$ as $\Delta=\{B\}$ is effective and $n\Delta$ is integral and $D\neq S=\lfloor \Gamma\rfloor$.

    Let $R_S:=B_S^+-B_S$, then around $z$ we have \[-n(K_S+B_S)\sim-n(B_S^+-B_S)=nR_S\geq 0.\] Set $R_{S'}:=\psi^*R_S$, we see that \[-n(K_{S'}+B_{S'}):=-n\psi^*(K_S+B_S)\sim nR_{S'}\geq 0\] and \[nN'|_{S'}\sim nR_{S'}\] by pullback diagrams. Hence \[(L'+P')|_{S'}\sim G_{S'}:=n R_{S'}+n\Delta_{S'}-\lfloor (n+1)\Delta_{S'}\rfloor+P_{S'}.\] Moreover $G_{S'}\geq 0$ since no components $D'$ of $n\Delta'-\lfloor(n+1)\Delta'\rfloor$ has coefficient $\geq 1$ and $(X',B')$ is log smooth. 
    
    Now let $A':=\phi^*(-(K_X+\Gamma))$ which is pull back of an ample ${\mathbb Q}$-divisor$/Z$, we see \[L'+P'-S'=K_{X'}+\Gamma'+A'+n\Delta'-\lfloor(n+1)\Delta'\rfloor+nN'+P'-S'=K_{X'}+\Lambda'+A'+nN'-S'.\] We see that $A'+nN'$ is pull back of an ample ${\mathbb Q}$-divisor$/Z$ and 
    \[\phi_*(K_{X'}+\Lambda')=K_X+\Gamma+\phi_*(n\Delta'-\lfloor(n+1)\Delta'\rfloor)\leq K_X+\Gamma\] since $P$ is exceptional and $nB$ is integral,  by \Cref{van} we have $R^1f_*(L'+P'-S')=0$. Hence \[f_*(L'+P')\to (f|_S)_*((L'+P')|_{S'})\] is a surjection and hence $G_{S'}$ lifts to $G'$ over $z$ and support does not contain $S'$.  Pushing down $L',P',G',T',\Delta'$ to $X$, we get \[-nK_X-nT-\lfloor(n+1)\Delta\rfloor=L\sim G\geq 0.\] Since $nB$ is integral ,we have \[-n(K_X+B)=-nK_X-nT-n\Delta=L\sim nR:=G\geq 0,\] we set \[B^+:=B+R.\]
    Then $n(K_X+B^+)\sim 0$. Set \[nR':=G'-P'+\lfloor (n+1)\Delta'\rfloor-n\Delta'\sim nN'\sim_{\mathbb Q}0/X\] is the pull back of $nR$ which fills the pullback diagram for $R$. Hence \[K_S+B_S^+=(K_X+B^+)|_{S}.\] By inversion of adjunction, $(X,B^+)$ is lc near $S$. If $(X,B^+)$ is not lc near fiber over $z$, let \[\Omega:=aB^++(1-a)\Gamma\] for $a$ close to $1$ such that $(X,B^+)$ is not lc near fiber over $z$. Then $-(K_X+\Omega)$ is ample$/Z$ and the non-klt center near the fiber over $z$ has an extra component other than $S$ which contradicts the connectedness principle.
\end{proof}
Now we are going to prove the general case.
\begin{theorem}
\label{relacomp1}
    Assume $(X,B)$ is a 3-dimensional projective lc pair, $f:X\to Z$ is a contraction with $\dim Z>0$, let $R\subset [0,1]$ be a finite set of rational numbers, then there is some natural number $n=n(R)\in\mathbb N$ such that suppose:
    \begin{enumerate}
        \item $\chara k>5$
        \item $B\in\Phi(R)$,
        \item $X$ is of Fano type$/Z$,
        \item $-(K_X+B)$ is nef$/Z$, and is $\mathbb Q$-linearly trivial$/Z$ if $Z$ is a curve.
    \end{enumerate}
    Then, for any $z\in Z$, there is an $n$-complement $K_X+B^+$ of $K_X+B$ over $z$ with $B^+\geq B$.
\end{theorem}
\begin{proof}
    First, we pick an effective Cartier divisor $N$ on $Z$ passing through $z$ and set \[\Omega:=B+tf^*N,\] where \[t=\text{lct}_z(f^*N,X,B).\]  Since the statement is local, we shrink $Z$ so that $(X,\Omega)$ is lc everywhere. Take $(X',\Omega')$ to be a dlt ${\mathbb Q}$-factorial modification of $(X,\Omega)$. Then $X'$ is of Fano type and there is some $\Delta'\in\Phi(R)$ such that $\Delta'\leq\Omega'$ and some vertical component of $\lfloor\Delta'\rfloor$ intersects $X_z$ and $\Delta\geq B$ where $\Delta$ is the pushdown of $\Delta'$ to $X$. Run an MMP$/Z$ on \[-(K_{X'}+\Delta')=-(K_{X'}+\Omega')+(\Omega'-\Delta'),\] since $-(K_{X'}+\Omega')$ is nef$/Z$ and $\Omega'-\Delta'$ is effective, we end with a minimal model $(X'',\Delta'')$. It suffices to prove the boundedness of complements for $(X'',\Delta'')$, hence we may assume that $X$ is ${\mathbb Q}$-factorial and that some vertical component of $\lfloor B\rfloor$ intersects the fiber $X_z$.
    
    We now perturb the coefficients of $B$ so that $B\in R$ for some finite set $R$. Let $\epsilon>0$ be a sufficiently small number, let $\Theta$ be a boundary on $X$ defined by
    \[\Theta:=B^{\leq 1-\epsilon}+\lceil B^{>(1-\epsilon)}\rceil.\]
    Run MMP$/Z$ on $-(K_X+\Theta)$ and let $X'$ be the resulting model. By \Cref{coe1}, we can choose $\epsilon$ depending only on $R$ such that no component of $\lfloor\Theta\rfloor$ is contracted by the MMP, $(X',\Theta')$ is lc, and $-(K_{X'}+\Theta')$ is nef over $Z$. Moreover, the coefficients of $\Theta'$ belongs to some finite set $R$ since $1$ is the only accumulation point of $\Phi(R)$. If $K_{X'}+\Theta'$ has an $n$-complement over $z$, then so is $K_X+\Theta$ and so is $K_X+B$.  So by replacing $(X,B)$ with $(X',\Theta')$, we may assume that $B\in R$.  
    
    Fix an $\alpha<1$ ver close to $1$, set \[\Delta:=\sum
    \limits_{D\text{ is vertical over }Z}\mu_D(B)D+\sum\limits_{D\text{ is horizontal over }Z}\alpha\mu_D(B)D.\] Then $\Delta=\alpha B$ near the generic fiber and \[-(K_X+\Delta)=-\alpha(K_X+B)-(1-\alpha)K_X\] is big near the generic fiber, hence $-(K_X+\Delta)$ is big$/Z$. Let $g:X\to V/Z$ be the Iitaka fibration induced by $-(K_X+B)$, run $-(K_X+\Delta)$-MMP$/V$ we end with a minimal model $X'$. Then $-(K_{X'}+\Delta')$ is nef and big over $V$ but may not nef over $Z$. However replace $\Delta$ with $aB+(1-a)\Delta$ for $a$ sufficiently close to $1$, we may assume $-(K_{X'}+\Delta')$ is nef and big over $Z$. In fact, assume \[-(K_{X'}+\Delta')\sim_{\mathbb Q} g^*D+N'/Z\] for some nef and big ${\mathbb Q}$-divisor $N'/Z$ on $X'$ and some divisor $D$ on $V$, we have \[-(K_{X'}+B')\sim_{\mathbb Q}g^*A/Z\] for some ample ${\mathbb Q}$-divisor $A/Z$ on $V$ by definition, hence \[-(K_{X'}+aB'+(1-a)\Delta')\sim_{\mathbb Q}g^*(aA+(1-a)D)+(1-a)N'/Z.\] For $a$ very close to $1$, $-(K_{X'}+aB'+(1-a)\Delta')$ is nef and big over $Z$. 
    
    The MMP doesn't contract any component of $\lfloor \Delta\rfloor$. In fact if a component $S$ is contracted at some step, say $h:(X_1,\Delta_1)\to (X_2,\Delta_2)$, then $K_{X_1}+\Delta_1-h^*(K_{X_2}+\Delta_2)$ is $h$-nef and intersects the extremal ray positively, hence $-(K_{X_1}+\Delta_1)+h^*(K_{X_2}+\Delta_2)$ is effective by negativity lemma, moreover the coefficient of $S^\sim$ in $h^*(K_{X_2}+\Delta_2)$ is larger than $1$, which contradicts the fact that $(X,\Delta)$ is lc. By replacing $(X,B)$ with $(X',B')$ and $\Delta$ with $\Delta'$, we may assume that $-(K_X+\Delta)$ is nef and big over $Z$ 
    
    Let $\tilde\Delta:=\beta\Delta$ for some $\beta<1$ close to $1$, let $X\to T/Z$ be the contraction induced by $-(K_X+\Delta)$, replace $(X,B)$ by a dlt ${\mathbb Q}$-factorial modification and suppose $\alpha,\beta$ are sufficiently close to $1$, we run $-(K_X+\tilde\Delta)$-MMP$/T$ we end with some model $(X',B')$ with boundaries $\Delta',\tilde\Delta'$ such that $-(K_{X'}+\Delta')$ and $-(K_{X'}+\tilde\Delta')$ are nef and big$/Z$, $(X',B')$ is dlt ${\mathbb Q}$-factorial and $(X',\tilde\Delta')$ is klt, $\lfloor \Delta\rfloor$ is vertical and some component of it intersect $X'_z$. Replace $(X,B,\Delta,\tilde \Delta)$ with $(X',B',\Delta',\tilde \Delta')$ and shrink $Z$ around $z$ so that we may assume that every component of $\lfloor \Delta\rfloor$ intersects $X_z$.

    For further reduction, we may assume \[-(K_X+\Delta)\sim_{\mathbb Q}A+G/Z\] where $A$ is ample and $G$ is effective. If $\text{Supp}(G)$ contains no non-klt centers of $(X,\Delta)$, then $(X,\Delta+\delta G)$ is dlt for all sufficiently small $\delta>0$. Moreover \[-(K_X+\Delta+\delta G)\sim_{\mathbb Q}(1-\delta)(A+G)+\delta A/Z\] is ample$/Z$. Hence by perturbing the coefficients of $\Delta+\delta G$ we can make some $(X,\Gamma\sim_{\mathbb Q}\Delta+\delta G)$ such that $\lfloor\Gamma\rfloor=S$ is a vertical component of $\lfloor B\rfloor$ intersecting $X_z$ and $-(K_X+\Gamma)$ is ample$/Z$. If $Z$ is a surface, then apply \Cref{plrcomp} and we are done. Otherwise, we assume that $K_X+B\sim_\mathbb Q0/Z$ and that $Z$ is a curve. We run $-\upsilon S=K_X+\Gamma-\upsilon S-(K_X+\Gamma)$-MMP$/Z$ for some $\upsilon\ll 1$, we see $S$ is not contracted since the MMP is $S$-positive. Since $K_X+B\sim_{\mathbb Q}0/Z$, running $-\upsilon S$-MMP$/Z$ is $K_X+B$-crepant and  we only need to prove the boundedness of complements with the ending model by \Cref{subcrep}. Now we end with some minimal model $(X',B',\Gamma')$ with $-S'$-nef$/Z$ since $S$ is vertical, apply \Cref{plrcomp} and we are done.
    
    If $\text{Supp}(G)$ contains some non-klt center of $(X,\Delta)$, set \[\Omega:=\tilde\Delta+\text{lct}_z(G+\Delta-\tilde\Delta,X,\tilde\Delta) (G+\Delta-\tilde\Delta).\] By letting $\beta\to 1$ in the settings of $\tilde\Delta$ we may assume $t$ and $\Delta-\tilde\Delta$ are sufficiently small, we can assume every non-klt center of $(X,\Omega)$ is also a non-klt center of $(X,\Delta)$ since no extra centers will be added. Set $t:=\text{lct}_z(G+\Delta-\tilde\Delta,X,\tilde\Delta)$, we see that \[\begin{aligned}-(K_X+\Omega)&=-(K_X+\tilde\Delta+t(G+\Delta-\tilde\Delta))=-(K_X+\Delta)+\Delta-\tilde\Delta-t(G+\Delta-\tilde\Delta)\\&\sim_{{\mathbb Q},Z}A+G-tG+(1-t)(\Delta-\tilde\Delta)\\&=(1-t)(\frac{t}{1-t}A+A+G-((K_X+\Delta)-(K_X+\tilde\Delta))\\&\sim_{{\mathbb Q},Z}(1-t)(\frac{t}{1-t}A-(K_X+\tilde\Delta)),\end{aligned}\] which is ample over $Z$ since $-(K_X+\tilde\Delta)$ is nef and big over $Z$. 
    
    If $\lfloor \Omega\rfloor\neq 0$, then there is a vertical component $S$ of $\lfloor \Omega\rfloor\leq\lfloor\Delta\rfloor\leq\lfloor B\rfloor$, with the same discussion applied on $(X,\Omega)$ as the first case we are done. 
    
    If $\lfloor \Omega\rfloor=0$, let $(X',\Omega')$ be a dlt ${\mathbb Q}$-factorial modification, shrink $Z$ we assume all components of $\lfloor \Omega'\rfloor$ intersect $X_z$. Run MMP$/X$ on $K_{X'}+\lfloor\Omega'\rfloor$ we terminate at $X$ since $\lfloor \Omega'\rfloor$ are reduced exceptional divisor of $X'\to X$ and $X$ is ${\mathbb Q}$-factorial klt, if $X'\to X''/X$ ends with a minimal model different from $X$, then $X$ is lc but not klt, which is a contradiction. The last step is a divisorial contraction $X''\to X$ contracting some prime divisor $S''$ where $(X'',S'')$ is plt and $-(K_{X''}+S'')$ is ample$/X$. Denote $K_{X''}+\Omega''$ to be the pullback of $K_X+\Omega$, then $S''\in\lfloor\Omega''\rfloor$. Hence let $\Gamma'':=aS''+(1-a)\Omega''$ for $a>0$ sufficiently small, then $-(K_{X''}+\Gamma'')$ is a globally ample divisor$/Z$ and $(X,\Gamma'')$ is plt with $\lfloor \Gamma''\rfloor=S''$, running $-\upsilon S$-MMP if $Z$ is a curve and using \Cref{plrcomp} again and we are done.
\end{proof}

We now describe the progress on lifting complements from an lc trivial fibration structure $(X,B)\to V/Z$ with $\dim V\geq 1$ for Fano type threefolds over $Z$. Since \Cref{cplt} is solved completely for the case when the fibers are curves by \Cref{relacomp1}, we only need to consider the case when $Z$ is a curve or a point. We are going to use \Cref{effcbf} to show \Cref{cplt} for such $(X,B)/Z$. Firstly, we consider the case when $Z$ is a curve. If $V$ is also a curve, then $V\to Z$ is a contraction, hence $V=Z$. This case is solved by \Cref{relacomp1} already. So we may assume $V$ is a surface and we have the following result:

\begin{proposition}
\label{relacomp2}
    Let $R\subset [0,1]$ be a finite set of rational numbers, there is an $n\in\mathbb N$ and a prime number $p_0$ depending only on $R$ such that if 
    \begin{enumerate}
        \item $(X,B)$ is a projective threefold lc pair defined over an algebraically closed field $k$ with $\chara k=p>p_0$,
        \item $Z$ is a projective normal curve,
        \item there is a contraction $g:X\to Z$, $X$ is of Fano type over $Z$,
        \item $-(K_X+B)$ is nef over $Z$,
        \item there is a contraction $f:(X,B)\to V$ such that $(K_X+B)\sim_{\mathbb Q} 0/V$ with $\dim V=2$,
        \item $B\in\Phi(R)$.
    \end{enumerate} 
    Then there is an $n$-complement $(X,B^+)$ for $(X,B)$ with $B^+\geq B$ over arbitrary point $z\in Z$.
\end{proposition}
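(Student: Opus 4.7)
The plan is to parallel the proof of Theorem~\ref{relacomp1} but reduce to the two-dimensional boundedness of complements via the canonical bundle formula applied to $f:X\to V$. Since \ref{relacomp1} already handles the cases when $-(K_X+B)$ is nef and big$/Z$ or trivial$/Z$, I restrict attention to the intermediate situation $\kappa_Z(-(K_X+B))=1$; in this case the semiample fibration induced by $-(K_X+B)/Z$ (which exists by the base-point-free theorem for Fano type threefolds in $\chara k>5$) should be identified with $f:X\to V$, giving in particular a contraction $V\to Z$. As in \ref{relacomp1}, I first replace $(X,B)$ by a dlt $\mathbb Q$-factorial modification, use \ref{coe1} to perturb the coefficients of $B$ into a finite set depending only on $R$, and shrink $Z$ around $z$ so that the problem becomes local.

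Next I apply Theorem~\ref{cbf1} to $f:X\to V$. The generic fiber of $f$ is a curve with $K_{X_\eta}+B_\eta\sim_{\mathbb Q}0$; using the Fano-type hypothesis together with $\chara k$ sufficiently large, the general fiber is a smooth rational curve, so $B$ is relatively big over $V$. Since $B\in\Phi(R)$ automatically forces $B^h\in\Phi(R)$, condition~(2) of \ref{cbf1} is satisfied and we obtain
\[
K_X+B\;\sim_{\mathbb Q}\;f^*(K_V+B_V+M_V),
\]
with $M_V$ a $b$-nef $\mathbb Q$-$b$-divisor and $B_V\in\Phi(S)$ for a finite set $S=S(R)$ arising from tracking hyperstandardness through the CBF. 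By Corollary~\ref{contr}, $V$ is of Fano type, and by the effective canonical bundle formula (to be used in Chapter~5 as \ref{effcbf}) one arranges $IM$ to be $b$-Cartier with $I=I(R)$ bounded. Pulling back the nefness of $-(K_X+B)/Z$ through $f$ gives $-(K_V+B_V+M_V)$ nef over $Z$.

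Now the two-dimensional case of Conjecture~\ref{cplt}, known in all characteristics by Alexeev's proof of BAB in dimension~$2$ \cite{boundednessandk2}, applied to $(V,B_V+M_V)/Z$ produces an $n$-complement $K_V+B_V^++M_V$ over the image $v\in Z$ of $z$, with $n=n(S,I)=n(R)$. Lifting this by setting $B^+:=B+f^*(B_V^+-B_V)$, the CBF identity together with Lemma~\ref{subcrep} and inversion of adjunction (\ref{divadj2}) verify that $(X,B^+)$ is lc near $X_z$ and $n(K_X+B^+)\sim 0$ over $z$. The principal obstacle is the identification of the semiample fibration of $-(K_X+B)/Z$ with the given $f:X\to V$, equivalently the existence of the morphism $V\to Z$: if $f$ does not a priori factor the map $g:X\to Z$, one must run an appropriate MMP or pass to a birational model so that the two contractions become compatible while preserving the Fano-type and CBF structure. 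A secondary technical issue is the verification of the hypotheses of \ref{cbf1}(2) --- especially the rationality of general fibers and the separability of the horizontal components of $B$ over $V$ --- which is handled as in the proof of \ref{cbf1} by restricting to large enough characteristic.
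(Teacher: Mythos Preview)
Your overall strategy---descend via the canonical bundle formula to a generalised pair $(V,B_V+M_V)$, invoke boundedness of complements in dimension $2$ over $Z$, then pull the complement back---is exactly what the paper does. The preliminary reductions you propose (dlt modification, the \ref{coe1} perturbation, identification of $f$ with the semiample fibration of $-(K_X+B)/Z$) are not carried out in the paper: it applies \ref{effcbf} directly to the given $f$, then \ref{contr} to get $V$ of Fano type over $Z$. Your concern about the existence of a morphism $V\to Z$ is legitimate, but in the paper's context $f$ is precisely the semiample fibration of $-(K_X+B)$ over $Z$ (see the paragraph preceding the proposition), so $V\to Z$ comes for free and no MMP manoeuvre is needed.

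The genuine gap is in your last step, the verification that $(X,B^+)$ is lc. Neither \ref{subcrep} nor \ref{divadj2} applies: the former concerns sub-crepant \emph{birational} maps, and the latter is inversion of adjunction for a \emph{divisor} $S\subset X$, not for a fibration $X\to V$. The paper instead argues directly through the CBF structure on high models. One first checks that $(V,B_V+M_V)$ is generalised lc (which you omit but is needed to apply the $2$-dimensional complement theorem): passing to $\psi:V'\to V$ and a compatible $\phi:X'\to X$ with $f':X'\to V'$, sub-lc of $(X',B')$ forces $\mu_D(B_{V'})=1-\mathrm{lct}_{\eta(D)}(f'^*D,X',B')\le 1$ for every prime $D$ on $V'$. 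Then, writing $G_{V'}$ for the pullback of $G_V=B_V^+-B_V$, generalised lc of the complement gives $\mu_D(B_{V'}+G_{V'})\le 1$, hence $\mu_D(G_{V'})\le t_D:=\mathrm{lct}_{\eta(D)}(f'^*D,X',B')$. Since $G'=f'^*G_{V'}$, this means $(X',B'+G')$ is sub-lc over the generic point of every such $D$, and pushing down yields $(X,B^+)$ lc. This is the inversion-of-adjunction mechanism appropriate to the fibration setting, and it is what you should substitute for your appeal to \ref{divadj2}.
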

\begin{proof}
    By \Cref{effcbf}, we have the effective canonical bundle formula 
    \[q(K_X+B)\sim qf^*(K_V+B_V+M_V)\]
    where $B_V$ and $M_V$ are the discriminant part and moduli part of the canonical bundle formula applied to $(X,B)/V$ and $B_V\in\Phi(S)$ for some finite set $S$ of rationals in $[0,1]$ and $qM_{V'}$ is nef Cartier for any high model $V'\to V$.

    Now we are going to show that the generalized pair $(V,B_V+M_V)$  is generalized lc. Indeed take a high resolution $\psi:V'\to V$ and a resolution $\phi:(X',B')\to (X,B)$ where $\phi^*(K_X+B)=K_{X'}+B'$ such that $X'\to V'$ is a morphism and the moduli part $M_{V'}$ of $X'/V'$ is nef and $qM_{V'}$ is Cartier. Then we have:
    \[q(K_{X'}+B')\sim qf'^*(K_{V'}+B_{V'}+M_{V'})=qf'^*\psi^*(K_V+B_V+M_V)\]
    So it suffices to prove that $(V',B_{V'})$ is sub-lc. Since $(X,B)$ is lc, we have that $(X',B')$ is sub-lc. Thus for any prime divisor $D$, the coefficient of $D$ in $B_{V'}$ is \[1-lct_D(X',B',f^*D)\leq 1.\] So $(V',B_{V'})$ is sub-lc as desired. Moreover $V$ is of Fano type$/Z$ by \Cref{contr}. By induction hypothesis, $K_V+B_V+M_V$ has an $n$-complement $K_V+B^+_V+M_V$ over $z$ for some $n$ divisible by $q$ such that $G_V:=B^+_V-B_V\geq0$. Denote the pull back of $G_V$ to $V',X,X'$ by $G_{V'},G,G'$ respectively. Let $B^+:=B+G$, then $n(K_X+B^+)\sim 0$ over a neighbourhood of $z$ by definition of a complement. Thus we only need to prove that $(X,B^+)$ is lc.

    Let $C$ be a prime divisor over $X$. If $C$ is horizontal over $V$, then the statement holds  because $G$ is vertical. We therefore consider the case when $C$ is vertical over $V$ and defined on a high model $\phi:X'\to X$. Let $K_{X'}+B'+G'=\phi^*(K_X+B^+)$, we only need to show $\mu_C(K_{X'}+B'+G')\leq 1$. We prove this by contradiction. Assume there exists a vertical divisor $C$ such that $\mu_C(K_{X'}+B'+G')>1$. Since $(V,B_V^++M_V)$ is generalized lc, on the high model $V'$, we have
    \[K_{V'}+B_{V'}+G_{V'}+M_{V'}=\psi^*(K_V+B^+_V+M_V).\]
    Let $D$ be the image of $C$ on $V'$ since $C'$ is vertical, we have $\mu_D(B_{V'}+G_{V'})\leq 1$. Let $t_D:=\text{lct}_{\eta(D)}(X',B',f'^*D)$, we have that $\mu_D(G_{V'})\leq t_D$ since $\mu_D(B_{V'})=1-t_D$ by the constrcution of canonical bundle formula. Since $G'=f'^*G_{V'}$, $(X',B'+G')$ is sub-lc over $\eta(D)$, which leads to a contradiction.
\end{proof}
Now suppose $Z=\text{pt}$, that is we are going to construct a (global) bounded complements for Fano type threefold pairs $(X,B)$ which admits a lc trivial fibration structure $(X,B)\to V$. The following result is one of our main results and we have seen the similar proof in \Cref{relacomp2}.
\begin{theorem}
\label{fibcomp}
    Let $R\subset [0,1]$ be a finite set of rational numbers, there is an $n\in\mathbb N$ and a prime number $p_0$ depending only on $R$ such that if $(X,B)$ is a projective lc pair defined over an algebraically closed field $k$ satisfying the following conditions:
    \begin{enumerate}
        \item $\chara k=p>p_0$,
        \item $-(K_X+B)$ is nef,
        \item $X$ is of Fano type,
        \item $B\in \Phi(R)$ and
        \item there is a contraction $f:(X,B)\to V$ such that $(K_X+B)\sim_{\mathbb Q} 0/V$ with $3>\dim V>0$,
    \end{enumerate}
    then there is an $n$-complement $(X,B^+)$ for $(X,B)$ with $B^+\geq B$.
\end{theorem}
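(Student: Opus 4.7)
The plan is to apply the canonical bundle formula to reduce the construction of a global complement on $X$ to the same problem on the base $V$, where $\dim V \in \{1,2\}$ and Conjecture \ref{cplt} is already known, and then to lift the resulting complement back to $X$. The argument closely follows the pattern of Proposition \ref{relacomp2}, but for a global rather than relative complement.

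First I would take a ${\mathbb Q}$-factorial dlt modification of $(X,B)$ and run an appropriate MMP over $V$ so that Corollary \ref{contr} applies. For $\dim V = 2$ this is automatic since condition (1) of \ref{contr} only requires $V$ projective normal. For $\dim V = 1$, I would run a $-K_X$-MMP over $V$ --- which terminates since $X$ is of Fano type over $V$ --- and pass to the resulting relative Mori fibre space so that the generic fibre becomes a Fano surface of Picard number $1$; Lemma \ref{singgen} then upgrades the generic lc condition to an $\epsilon$-lc one. By Lemma \ref{subcrep}, an $n$-complement on the modified model yields one on the original $(X,B)$.

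Next, Lemma \ref{effcbf} produces the effective canonical bundle formula
\[q(K_X + B) \sim qf^*(K_V + B_V + M_V)\]
with $B_V \in \Phi(S)$ for some finite set $S$ depending only on $R$, and $qM_V$ b-nef and b-Cartier. As in the proof of Proposition \ref{relacomp2}, the generalised pair $(V, B_V + M_V)$ is generalised lc (using a high common resolution and the definition of $B_V$ via log canonical thresholds), and by Corollary \ref{contr} the base $V$ is of Fano type. Since Conjecture \ref{cplt} is known in dimension at most $2$, $(V, B_V + M_V)$ admits an $n$-complement $K_V + B_V^+ + M_V$ with $B_V^+ \geq B_V$, for some $n$ divisible by $q$ depending only on $R$.

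Finally, setting $B^+ := B + f^*(B_V^+ - B_V)$, the relation $n(K_V + B_V^+ + M_V) \sim 0$ pulls back through the canonical bundle formula to give $n(K_X + B^+) \sim 0$. The lc property of $(X, B^+)$ is verified exactly as in the last paragraph of Proposition \ref{relacomp2}: horizontal components over $V$ are unaffected since $f^*(B_V^+ - B_V)$ is vertical, and for a vertical prime divisor $C$ over $X$ mapping to a prime divisor $D \subset V$, the inequality $\mu_D(B_V^+) \leq 1$ together with $\mu_D(B_V) = 1 - \lct_{\eta(D)}(f^*D, X, B)$ forces $\mu_D(B_V^+ - B_V) \leq \lct_{\eta(D)}(f^*D, X, B)$, which ensures $(X, B^+)$ is sub-lc above $\eta(D)$. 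I expect the main obstacle to be the MMP reduction in the case $\dim V = 1$, where producing a model with Picard number $1$ generic fibre without destroying the fibration $X \to V$ is delicate, and one must carefully track the hypotheses needed by both Lemma \ref{effcbf} and Corollary \ref{contr}.
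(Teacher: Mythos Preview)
Your overall strategy---apply Lemma \ref{effcbf} to descend to a generalised pair $(V,B_V+M_V)$ on the base, use Corollary \ref{contr} to see $V$ is of Fano type, produce a complement on $V$ by the known low-dimensional case of Conjecture \ref{cplt}, and lift it back exactly as in Proposition \ref{relacomp2}---matches the paper's proof essentially verbatim for the case $\dim V = 2$. The difficulty, as you correctly anticipate, is the reduction when $\dim V = 1$, and there your argument has a genuine gap.

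First a minor point: running a $-K_X$-MMP over $V$ terminates with a \emph{minimal model} for $-K_X$ (since $-K_X$ is big over $V$), not a Mori fibre space; you want a $K_{X'}$-MMP. More seriously, even after running $K_X$-MMP to a Mori fibre space $X''\to V$, Lemma \ref{singgen} does \emph{not} upgrade the generic fibre to $\epsilon$-lc in the sense required by Corollary \ref{contr}(2). What \ref{singgen} gives is that the log discrepancies of the \emph{pair} $(X''_\eta,B''_\eta)$ lie in $\{0\}\cup[\epsilon,\infty)$; but \ref{contr}(2) demands that the \emph{variety} $X''_\eta$ be $\epsilon$-lc. If $D$ is an exceptional lc place of $(X''_\eta,B''_\eta)$ with $a(D,X''_\eta,B''_\eta)=0$, the discrepancy $a(D,X''_\eta,0)$ is positive (since $X''$ is klt) but there is no reason it should be $\geq\epsilon$---for instance if $X''_\eta$ carries a cyclic quotient singularity of large index.

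The paper closes this gap by an extra extraction step \emph{before} the MMP: after a dlt modification, one extracts precisely the divisors $D$ with $a_\eta(D,X,B)\leq\epsilon$ (equivalently $=0$, by \ref{singgen}) to obtain $X'\to X$. On $X'$ these divisors are now actual divisors, so $a(D,X'_\eta,0)=1$; every other divisor over $X'_\eta$ has $a(\cdot,X'_\eta,0)\geq a(\cdot,X'_\eta,B'_\eta)\geq\epsilon$. Thus $X'_\eta$ is $\epsilon$-lc, and this is preserved under the subsequent $K_{X'}$-MMP over $V$. One must also allow for the possibility that the resulting Mori fibre space $X''\to Z/V$ has $\dim Z=2$, in which case one simply falls back to the curve-fibre case already handled; this bifurcation is not addressed in your proposal.
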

\begin{proof}
    We first consider the case when the fibers of $X/V$ are curves or $X/V$ is birational. In this case, by \Cref{effcbf}, we have the effective canonical bundle formula 
    \[q(K_X+B)\sim qf^*(K_V+B_V+M_V)\]
    where $B_V$ and $M_V$ are the discriminant part and moduli part of the canonical bundle formula applied to $(X,B)/V$ and $B_V\in\Phi( S)$ for some finite set $ S$ of rationals in $[0,1]$ and $qM_{V'}$ is nef Cartier for any high model $V'\to V$.

    Now we are going to show that the generalized pair $(V,B_V+M_V)$  is generalized lc. Indeed take a high resolution $\psi:V'\to V$ and a resolution $\phi:(X',B')\to (X,B)$ where $\phi^*(K_X+B)=K_{X'}+B'$ such that $X'\to V'$ is a morphism and the moduli part $M_{V'}$ of $X'/V'$ is nef and $qM_{V'}$ is Cartier. Then we have:
    \[q(K_{X'}+B')\sim qf'^*(K_{V'}+B_{V'}+M_{V'})=qf'^*\psi^*(K_V+B_V+M_V)\]
    So it suffices to prove that $(V',B_{V'})$ is sub-lc. Since $(X,B)$ is lc, we have that $(X',B')$ is sub-lc. Thus for any prime divisor $D$, the coefficient of $D$ in $B_{V'}$ is \[1-lct_D(X',B',f^*D)\leq 1.\] So $(V',B_{V'})$ is sub-lc as desired. Moreover $V$ is of Fano type by \Cref{contr}. By induction hypothesis, $K_V+B_V+M_V$ has an $n$-complement $K_V+B^+_V+M_V$ for some $n$ divisible by $q$ such that $G_V:=B^+_V-B_V\geq0$. Denote the pull back of $G_V$ to $V',X,X'$ by $G_{V'},G,G'$ respectively. Let $B^+:=B+G$, then $n(K_X+B^+)\sim 0$ by definition of a complement. Thus we only need to prove that $(X,B^+)$ is lc.

    Let $C$ be a prime divisor over $X$. If $C$ is horizontal, the statement holds because $G$ is vertical. We therefore consider the case when $C$ is vertical and defined on a high model $\phi:X'\to X$. Let $K_{X'}+B'+G'=\phi^*(K_X+B^+)$, we only need to show $\mu_C(K_{X'}+B'+G')\leq 1$. We prove this by contradiction. Assume that there exists a vertical divisor $C$ such that $\mu_C(K_{X'}+B'+G')>1$. Since $(V,B_V^++M_V)$ is generalized lc, on the high model $V'$, we have
    \[K_{V'}+B_{V'}+G_{V'}+M_{V'}=\psi^*(K_V+B^+_V+M_V).\]
    Let $D$ be the image of $C$ on $V'$ since $C'$ is vertical, we have $\mu_D(B_{V'}+G_{V'})\leq 1$. Let \[t_D:=\text{lct}_{\eta(D)}(X',B',f'^*D),\] we have that $\mu_D(G_{V'})\leq t_D$ since $\mu_D(B_{V'})=1-t_D$ by the constrcution of canonical bundle formula. Since $G'=f'^*G_{V'}$, $(X',B'+G')$ is sub-lc over $\eta(D)$, a contradiction.

    Now consider the case when the fiber of $(X,B)/V$ are surfaces. Take a dlt modification of $(X,B)$ we may assume $(X,B)$ is dlt and ${\mathbb Q}$-factorial. By \Cref{singgen}, we have that the set of discrepancies near the generic fiber $A_\eta(X/Z,B)=\{0,>\epsilon\text{ part}\}$. Extract the divisors $D$ such that $a_\eta(D,X,B)\leq \epsilon$ we get $X'\to X$. Here $X'$ is of Fano type and $\epsilon$-lc. Run an $K_{X'}$-MMP$/V$, we get a Mori fiber space structure $X''\to Z/V$. If $\dim Z=2$, then $K_{X''}+B''\sim_{\mathbb Q}0/V$ and it falls in the first case. Otherwise, $X''\to Z$ is a Mori fiber space since $Z$ is a curve and $X_\eta''$ is $\epsilon$-lc. If $(X'',B'')/Z$ is not the exceptional case in \Cref{contr}, apply \Cref{contr} and the same proof above we are done. If $(X'',B'')/Z$ is the only exceptional case in \Cref{contr}, then we see that $K_Z\sim0$ is a $1$-complement already, enlarge $n$ such that $q$ divides $n$ and the same proof will also apply.
\end{proof}
As a corollary, we can prove our another main result:
\begin{corollary}
\label{fibcomp2}

    Let $R\subset [0,1]$ be a finite set of rational numbers. Then there is a constant $n$ and a prime number $p_0$ depending only on $R$, such that if $(X,B)$ is a $3$-dimensional projective lc pair defined over an algebraically closed field $k$ satisfying the following conditions:
    \begin{enumerate}
        \item $\chara k=p>p_0$,
        \item $B\in \Phi(R)$,
        \item $X$ is Fano type and
        \item $-(K_X+B)\not\equiv0$ is nef but not big,
    \end{enumerate}
    then there is an $n$-complement $(X,B^+)$ with $B^+\geq B$.
\end{corollary}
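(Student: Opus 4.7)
The approach is to produce a fibration structure from the assumption that $-(K_X+B)$ is nef but not big and not numerically trivial, and then to invoke Theorem \ref{fibcomp1}. Since $X$ is a threefold of Fano type in characteristic $p>5$, the base-point free theorem (recorded in the preliminaries) ensures that every nef $\mathbb{Q}$-divisor on $X$ is semi-ample. Applied to $-(K_X+B)$ this produces, after Stein factorization, a contraction $f\colon X \to V$ onto a normal projective variety together with an ample $\mathbb{Q}$-divisor $A$ on $V$ satisfying
\[-(K_X+B) \sim_{\mathbb{Q}} f^*A.\]
If $X$ is not already $\mathbb{Q}$-factorial, one first passes to a small $\mathbb{Q}$-factorialization, which preserves Fano type, and uses Lemma \ref{subcrep} to transport any complement obtained there back to the original $X$.

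From $-(K_X+B) \sim_{\mathbb{Q}} f^*A$ with $A$ ample on $V$, the dimension of $V$ equals the Iitaka dimension of $-(K_X+B)$. The assumption $-(K_X+B) \not\equiv 0$ forces $\dim V \geq 1$, while the assumption that $-(K_X+B)$ is not big forces $\dim V \leq 2$; hence $0 < \dim V < 3$. The relation above also gives $K_X+B \sim_{\mathbb{Q}} -f^*A \sim_{\mathbb{Q}} 0/V$, so the relative $\mathbb{Q}$-triviality over $V$ needed by Theorem \ref{fibcomp1} is automatic.

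At this point all five hypotheses of Theorem \ref{fibcomp1} are in place: $(X,B)$ is a projective lc threefold pair, $\chara k = p$ can be taken larger than the constant $p_0$ supplied by that theorem (and simultaneously larger than $5$ so that the base-point free theorem applies), $-(K_X+B)$ is nef, $X$ is of Fano type, $B \in \Phi(R)$, and $f\colon (X,B) \to V$ is a contraction with $K_X+B \sim_{\mathbb{Q}} 0/V$ and $3 > \dim V > 0$. Applying Theorem \ref{fibcomp1} yields an $n$-complement $(X,B^+)$ with $B^+ \geq B$ for some $n$ and $p_0$ depending only on $R$, as required. The only nontrivial input is the semi-ampleness of $-(K_X+B)$ via the base-point free theorem in characteristic $p>5$ for Fano type threefolds; once the fibration is constructed, the reduction to the fibered result is essentially formal.
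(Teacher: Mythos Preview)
Your proposal is correct and follows essentially the same approach as the paper: the paper's proof is the single line ``Consider the fibration defined by $-(K_X+B)$ and use \ref{fibcomp}, we are done,'' and you have simply spelled out the implicit steps (semi-ampleness of $-(K_X+B)$ via the base-point free theorem for Fano type threefolds, the resulting contraction $f\colon X\to V$ with $0<\dim V<3$, and $K_X+B\sim_{\mathbb Q}0/V$). The $\mathbb{Q}$-factorialization detour is unnecessary since neither the base-point free theorem as quoted nor Theorem~\ref{fibcomp1} requires it, but it does no harm.
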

\begin{proof}
    The result follows by considering the fibration defined by $-(K_X+B)$ and applying \Cref{fibcomp}.
\end{proof}
\begin{remark}
    Assume good theory of generalized pairs for threefolds in positive characteristic, the author believes that the arguments above could apply to the generalized pair case $(X,B+M)\to V$ when the moduli part $M$ is not big$/V$.
\end{remark}

\section{Boundedness of relative complements}

To prove the boundedness of relative complements of a 3-dimensional Fano type fibration $f:(X,B)\to Z$, the only remaining case is when $Z$ is a curve and $-(K_X+B)$ is nef and big over $Z$. Taking the semiample fibration defined by $-(K_X+B)$, we may assume that $-(K_X+B)$ is ample over $Z$. Moreover, since the problem is local near $z\in Z$, we may shrink $Z$ and assume $Z$ is affine. We use the technique of Frobenius stable sections to lift complements, following a similar approach and setup as in \cite{threedimmmp} and \cite{HW19}. Let $f:X\to Y$ be a projective morphism between normal varieties over $k$, we denote the relative Frobenius stable section of a line bundle $L$ with respect to the Frobenius trace map associated to $(X,\Delta)/Y$ to be $S^0f_*(X,\Delta;L):=S^0f_*(\sigma(X,\Delta)\otimes \mathcal O_X(L))$, see \cite{threedimmmp} for the detailed definition.

Let $(X,S+B)/Z$ be a dlt log Fano fibration, here $S$ is reduced (might be empty) and $B<1$. Fix a natural number $m\in\mathbb N$ and we denote $A:=-(K_X+S+B)$, we study the sections in $f_*(\mathcal O_X(\lfloor mA\rfloor))$. If $B\in \Phi(R)$ for some finite set of rational numbers $R$, let $I$ be the least common denominator of $R$ and we may assume $I$ divides $m$. We define:
\[\begin{aligned}
    \Phi&:=S+\{(m+1)B\},\\
    L&:=\lfloor mA\rfloor.
\end{aligned}\]
For future perturbations, we choose an effective $\mathbb Q$-divisor $\Lambda$ with sufficiently small coefficients, having no common components with $S$, and such that the Cartier index of $K_X+S+B+\Lambda^m$ is not divisible by $p=\chara k$, where $\Lambda^m:=\frac{1}{m+1}\Lambda$. Since $m$ is divisible by $I(R)$ and $B\in\Phi(R)$, we see the defect divisor \[D:=\lceil mB\rceil-\lfloor (m+1)B\rfloor=0.\]
Hence we have 
\[\begin{aligned}
    \lfloor mA\rfloor&=-m(K_X+S)-\lceil mB\rceil\\
    &=-m(K_X+S)-\lfloor(m+1)B\rfloor\\
    &=K_X+\Phi-(m+1)(K_X+S+B,)
\end{aligned}\]
so $L-(K_X+\Phi+\Lambda)=-(m+1)(K_X+S+B+\Lambda^m)$ is ample$/Z$ with Cartier index non-divisible by $p$, and so the denominators appearing in $K_X+\Phi+\Lambda$ is not divisible by $p$.
\begin{definition}
    With the notations above, we define $C^0_\Lambda f_*(X,S+B;L):=S^0f_*(X,\Phi+\Lambda; L)\subset f_*(\mathcal O_X(L))$ the relative perturbed Frobenius stable section of $L$ associated to $(X,S+B)$.
\end{definition}

Consider a coefficient $a=1-\frac{r}{ln}\in\Phi(R)$ for some $\frac{r}{l}\in R$, may assume $l=m$ since $m$ is divisible by $l$, we see \[\{(m+1)a\}=\{m-\frac{r}{n}+1-\frac{r}{mn}\}=\{-\frac{r}{n}-\frac{r}{mn}\}\leq 1-\frac{r}{mn}=a.\] As a result, we see that $B\geq \Phi$. Let $T$ be the normalization of an irreducible component $T'\subset S$, define $K_T+B_T\sim_\mathbb Q(K_X+S+B)|_T$ and $K_T+\Phi_T+\Lambda_T\sim_\mathbb Q (K_X+\Phi+\Lambda)|_T$ be the divisorial adjunction formulas. Since any component of $S$ is not contained in $\Supp(\Lambda)$ and $\Lambda$ is sufficiently small, $B\geq \Phi$ implies that if $(T,B_T)$ is globally F-regular (resp. globally F-split), then $(T,\Phi_T+\Lambda_T)$ is also globally F-regular (resp. globally F-split).

Now recall our settings, $(X,B)\to Z$ is a 3-dimensional log Fano fibration over an affine curve. Take a $\mathbb Q$-factorial dlt modification we may assume $(X,S+B)$ is dlt $\mathbb Q$-factorial with $\lfloor S+B\rfloor=S$. We first consider the case when $S\neq0$ is irreducible and is vertical over $Z$ and $(X,S+B)$ is plt. In particular, $S$ is normal. Consider the adjunction formula $K_S+B_S:=(K_X+S+B)|_S$, we have that $(S,B_S)$ is a klt log del pezzo pair. We first prove the global F-regularity of $(S,B_S)$ when it is $\epsilon$-lc.

\begin{lemma}\label{sfrgfr}
    Let $X\to T$ be an affine flat morphism of normal varieties over an F-finite field of characteristic $p>0$, then $X$ is strongly F-regular if and only if $X$ is globally F-regular over $T$.
\end{lemma}
\begin{proof}
    Since relative globally F-regularity is a local condition, we may assume $T$ is affine, hence $X$ is affine and the assertion follows directly.
\end{proof}
\begin{definition}[Relative section ring]
    Let $f:X\to T$ be a contraction of normal varieties over an algebraically closed field, then for any line bundle $L$ on $X$ which is ample over $T$, we define the section $\mathcal O_T$-algebra, or the relative section ring associated to $(X,L)/T$ to be the following graded $\mathcal O_T$-module:
    \[\mathcal S:=\bigoplus\limits_{n\in \mathbb N}R^0f_*\mathcal O_X(nL).\]
    Here the $\mathcal O_T$-algebra structure on $\mathcal S$ is given naturally by the following morphism:
    \[R^0f_*\mathcal O_X(nL)\otimes R^0f_*\mathcal O_X(mL)\to R^0f_*(\mathcal O_X(nL)\otimes\mathcal O_X(mL))\simeq R^0f_*(\mathcal O_X((n+m)L)).\]
    We define the distinguished graded sub-module of $\mathcal S$ to be
    \[\mathcal S_+:=\bigoplus\limits_{n\in \mathbb N^+}R^0f_*\mathcal O_X(nL).\]
    This is a homogeneous ideal module of $\mathcal S$ and we have $\mathcal S/\mathcal S_+\simeq \mathcal O_T$ since $X/T$ is a contraction. We define the punctured spectrum of $\mathcal S$ to be $\Spec_{\mathcal O_T}\mathcal S-V(\mathcal S_+)$, here $V(\mathcal S_+)$ is the set consisting of all the prime ideals containing $\mathcal S^+$ after base changing to any affine open subscheme of $T$. 
\end{definition}
\begin{lemma}
    $V(\mathcal S_+)$ is a well-defined closed subset of $\Spec_{\mathcal O_T}\mathcal S$. Moreover, every ideal in $V(\mathcal S_+)$ is a homogeneous ideal.
\end{lemma}
\begin{proof}
    Let $\coprod\limits T_i\to T$ be an open affine covering of $T$ and $\coprod\limits_{k}T_{ijk}\to T_{ij}:=T_i\cap T_j$ to be an open affine covering of $T_{i,j}$. We only need to show $V(\mathcal S_+)$ is compatible with the hypercoverings. We may assume $T_i=\Spec A$ and $T_{ijk}:=\Spec A_f$ to be a principal open subscheme. Let $\mathcal S|_{T_i}=:S$ to be a graded $A$-algebra, and similar for $S_+$. Then $(V(\mathcal S_+)|_{T_i})\times_{T_i}T_{ijk}=\{\text{prime ideals in } S/S_+\simeq A\text{ with no intersection with }M(f)\}=\{\text{prime ideals in } A_f\}=V(\mathcal S_+)|_{T_{ijk}}$ is defined only depending on $T_{ijk}$, where $M(f):=\{1,f,f^2,\cdots\}$. Hence $V(\mathcal S_+)$ is compatible with the hypercoverings. The second argument is clear since $\mathcal S_+$ contains all the terms with grading larger than $0$.
\end{proof}
\begin{lemma}\label{sectionring}
    There is a naturally defined morphism $\Spec_{\mathcal O_T}\mathcal S-V(\mathcal S_+)\to X/T$, which can be viewed as a $\mathbb G_m(T):=\Spec_{\mathcal O_T}\mathcal O_T[t,t^{-1}]$ bundle over $X$ and there is a natural identification $X\simeq \Proj_T\mathcal S$.
\end{lemma}
\begin{proof}
    The identification is proved in \cite[II.4.6.2]{PMIHES_1961__8__5_0}. The first morphism is defined naturally by the identification.
\end{proof}
\begin{remark}
    The relative section algebra is compatible with the fibers. In fact, if $X/T$ has normal geometric fibers and $X_f$ is a general fiber of $X/T$, then the base change of the data $(X,L,\mathcal S,\mathcal S_+,\Spec_{\mathcal O_T}\mathcal S-V(\mathcal S_+)\to X=\Proj_T \mathcal S)$ to $X_f$ is just the global section ring construction $(X_f,L|_{X_f},S,\mathfrak m,\Spec S-\mathfrak m\to X_f=\Proj S)$. One can view $\Spec_{\mathcal O_T}\mathcal S/T$ to be the relative affine cone of $X$ in some sense.
\end{remark}
\begin{lemma}\label{étaleness}
      Let $\mathcal S^{(n)}$ denote the relative section ring of $X$ associated to $nL$ such that $\mathcal S^{(n)}$ is generated by degree-1 terms, we have a natural morphism $\Spec_{\mathcal O_T}\mathcal S\to \Spec_{\mathcal O_T}\mathcal S^{(n)}$. Suppose $n$ is not divisible by $p=\chara k$, then this induces an étale map on the punctured spectrum.
\end{lemma}
\begin{proof}
    Since the statement is local, we may assume $T$ is affine.   We can choose an open affine cover of $X$ which trivializes $L$ over $T$. That is, thinking of $X=\Proj_T(\mathcal S^{(n)})$, we may choose a homogeneous element $x\in \mathcal S^{(n)}$ and assume the covering has the form $D_+(x)=\Spec A_0$, where $A_0=[\mathcal S^{(n)}_x]_0$. Let $y\in X$ be a point in $D_+(x)$ we are looking at, suppose $L$ is trivial on an affine neighbourhood $U$ of $y$, then so it is on $U\cap D_+(x)$. Then there is some $f\in A_0$ such that $D_+(x,f):=\Spec (A_0)_f\subset U\cap D_+(x)$.  By definition, $f=\frac{g}{x^m}$, where $g$ is a homogeneous element in $\mathcal S$ of grading $mn$, so we have $D_+(xg)\subset U\cap D_+(x)$. Replace $x$ with $xg$, we can always find some $D_+(x)$ such that $L$ is trivial on $D_+(x)$.  Now set $D(x)=\Spec A=\Spec \mathcal S^{(n)}_x\to D_+(x)$ be the projection map, we see that $D(x)$ covers the punctured spectrum and hence we only need to prove that the morphism
    \[\mathcal S^{(n)}_x\to \mathcal S_x\] is étale. Since $L=\mathcal S(1)^\sim$, we have that \[L^n|_{D_+(x)}\simeq \mathcal S(n)[\frac{1}{x}]^{\sim}\simeq [\mathcal S[\frac{1}{x}]]_n\simeq [\mathcal S[\frac{1}{x}]]_{0}\] for any $n\in\mathbb Z$, where the last isomorphism follows by the triviality of $L$ on $D_+(x)$. Hence take a generator $t\in \mathcal S_1$, we see $\mathcal S_x\simeq \mathcal S^{(n)}_{(x)}[t,t^{-1}]$ and $\mathcal S^{(n)}_x=\mathcal S^{(n)}_{(x)}[t^n,t^{-n}]$ and the assertion follows since $n$ is not divisible by $p$.
\end{proof}
For any prime divisor $D$ on $X=\Proj_T \mathcal S$, it corresponds to a homogeneous prime ideal of $\mathcal S$ of height $1$, which induces a prime divisor on $\Spec_T \mathcal S$, we denote it by $D_\mathcal S$. Under this correspondence, we have $(K_{X})_\mathcal S=K_{\Spec_T\mathcal S}$. 
\begin{lemma}\label{kltcorresp}
    Suppose $(X,D)/T$ is a flat non-birational contraction, $\mathcal L$ is an ample$/T$ line bundle on $X$, $(\Spec_T\mathcal S,D_\mathcal S)$ and $(\Spec_T\mathcal S^{(n)},D_{\mathcal S^{(n)}})$ are the corresponding pairs defined by $(X,D)$, then $(\Spec_T\mathcal S,D_\mathcal S)$ is klt (resp. lc) if and only if $(\Spec_T\mathcal S^{(n)},D_{\mathcal S^{(n)}})$ is klt (resp. lc).
\end{lemma}
\begin{proof}
    Consider the morphism $f:\Spec_{T}\mathcal S\to \Spec_{T}\mathcal S^{(n)}$, we see that it is étale outside $V(\mathcal S_+)$ by \Cref{étaleness}. Since $X/T$ is flat and non-birational, $V(\mathcal S_+)$ is of codimension greater or equal than two in $\Spec_{T}\mathcal S$. So the finite morphism $f$ is étale in codimension $1$. Moreover, we see $f^*K_{\Spec_T\mathcal S^{(n)}}=K_{\Spec_T\mathcal S}$ and $f^*D_{\Spec_T\mathcal S^{(n)}}=D_{\Spec_T\mathcal S}$, so the log discrepancies will retain.
\end{proof}
\begin{theorem}\label{corresp}
    Let $(X,D)/T$ be a flat contraction over a field of characteristic $0$ and $T$ is smooth with $(X,D)$ klt, $(\Spec_T \mathcal S,D_S)$ is one of its section ring spectrum, then $(X,D)/T$ is a Fano fibration if and only if there exists a section ring $\mathcal S=\mathcal S(X,L)$ such that $(\Spec_T \mathcal S,D_S)$ is klt.
\end{theorem}
\begin{proof}
    Since the statement is local on $T$, we can shrink $T$ arbitrarily. By \Cref{kltcorresp}, we may assume $\mathcal S$ is generated by grading-1 terms. We denote $S$ to be $\Spec_T\mathcal S$ and $S^*$ to be $\Spec_T\mathcal S-V(\mathcal S_+)$  for convenience. We also denote $Y$ to be the blow up of $S$ along $V(\mathcal S_+)$. Since $T$ is smooth, this is a blow up along a smooth locus with the exceptional locus $X$. $Y\to X$ can be viewed as the tautological bundle of $X=\Proj_T \mathcal S$. Since $T$ is smooth, $S^*\to X$ is a smooth morphism, so $(X,D)$ is klt if and only if $(S^*,D_{S^*})$ is klt. 

    Suppose $K_S+D_S$ is $\mathbb Q$-Cartier on $S$, we assume $n(K_S+D_S)$ is Cartier, which means this is an $\mathcal S$-module locally free of rank $1$. We pick a point $z\in T$ and consider the problem locally near $z$. $n(K_S+D_S)$ is graded locally free means that near $z$, there is a open covering of principal open sub-schemes $\{D(x_i)\to S\}$, such that \[n(K_S+D_S)|_{D(x_i)}=\mathcal S_{(x_i)}(t_i)\] for some integer $t_i$ and homogeneous $x_i$ and $\{D(x_i)\to S\}$ covers $S_z$. Consider the restriction of $n(K_S+D_S)$ to $D(x_ix_j)\subset D(x_i),D(x_j)$, we see $t_i=t_j$. Hence all $t_i=t$, and $n(K_S+D_S)=\mathcal S(t)$ near $S_z$. So one can always shrink $T$ to assume $(K_S+D_S)|_{V(\mathcal S_+)}$ is $\mathbb Q$-linearly trivial and make $n(K_S+D_S)$ a free $\mathcal S$-module $\mathcal S(t)$. Then we have $K_S+D_S\sim_\mathbb Q\frac{t}{n}\mathcal O(1)$ is either ample, numerical trivial or anti-ample. Consider the map $\pi:Y\to S$, we have that \[\pi^*(K_S+D_S)\sim_\mathbb QK_Y+D_Y+aX,\] where $X$ is the exceptional divisor. We see that $X|_X=\mathcal O(-1)$ since $Y$ is the tautological bundle of $X$ with respect to $L=\mathcal S(1)$ defining $\mathcal S$ on $X=\Proj_T\mathcal S$, where $X\to Y$ is the zero section. Hence \[\begin{aligned}(K_S+D_S)|_X&\sim_\mathbb Q (K_Y+X)|_X+D_Y|_X+(a-1)X|_X\\&\sim_\mathbb Q K_X+D_X+(1-a)L\sim_\mathbb Q\frac{t}{n}L+(1-a)L/T.\end{aligned}\] On the other hand, \[(K_S+D_S)|_X=((K_S+D_S)|_{V(\mathcal S_+)})|_X\sim_\mathbb Q0/T. \] Hence we have $a=1+\frac{t}{n}$. Under this reduction, we see if $(S,D_S)$ is klt, then the above arguments apply and $a<1$ implies $\frac{t}{n}<0$, hence $K_X+D\sim_\mathbb Q\frac{t}{n}L$ is anti-ample over $T$. Conversely, if $(X,D)$ is log Fano$/T$, then $L:=-n(K_X+D)$ is an ample Cartier divisor$/T$ for some $n\in\mathbb N$, so the section ring associated to $L$ will give what we need.
\end{proof}
\begin{theorem}\label{affgfr}
    Let $X/T$ be a flat contraction of normal quasi-projective varieties over an algebraically closed field of characteristic $p$, $\mathcal S=\mathcal S(X,L)$ is a section ring associated to any ample$/T$ line bundle $L$ on $X$. Then the pair $(X,D)$ is globally F-regular over $T$ if and only if $(\Spec_T\mathcal S,D_S)$ is globally F-regular over $T$.
\end{theorem}
\begin{proof}
    The statement is local on $T$, so the arguments of \cite[5.3]{SS09} apply after shrinking $T$ to a sufficiently small affine open subschemes.
\end{proof}
Combining the above preparations, we obtain the following result.
\begin{theorem}\label{relgfr}
    Let $(X,\Delta)\to T$ be a flat contraction of normal varieties in characteristic $0$, suppose $(X,\Delta)$ is log Fano over $T$, then for its any integral model $(\mathfrak X,\daleth)/\mathfrak T$, $(\mathfrak X_p,\daleth_p)$ is globally F-regular over $\mathfrak T_p$ for almost all $p$. 
\end{theorem}
\begin{proof}
    By \Cref{corresp}, there exists a section ring spectrum $(\Spec_T\mathcal S,\Delta_S)/T$ which is klt. Hence by \cite[3.4]{tak04}, for almost all mod $p$ reductions, $(\Spec_T\mathcal S,\Delta_S)/T$ is strongly F-regular, hence globally F-regular$/\mathfrak T_p$ by \Cref{sfrgfr}, hence $(\mathfrak X_p,\daleth_p)$ is re-constructed to be globally F-regular$/\mathfrak T_p$ for almost all $p$ by \Cref{affgfr}.
\end{proof}
\begin{corollary}\label{gengfr}
    Let $\epsilon>0$ be a real number and $I\subset [0,1]\cap \mathbb Q$ be a finite set of rational numbers, then there exists a $p_0=p_0(\epsilon,I)$ such that for all $p>p_0$ and $\chara k=p$, an $\epsilon$-lc log Fano surface pair $(X,B)/k$ with $B$ in $I$ is globally F-regular.
\end{corollary}
\begin{proof}
    We assume $p_0>5$. Since $\epsilon$-lc log del Pezzo surfaces form a bounded family over $\mathbb Z[\frac{1}{30}]$, hence there is a flat projective family $(\mathfrak X,\mathfrak B)\to \mathfrak T$ of finite type over $\mathbb Z[\frac{1}{30}]$ such that each $(X,B)$ is one of its geometric fiber. Consider the locus $\mathfrak T'$ inside $\mathfrak T$ where $(\mathfrak X,\mathfrak B)$ is log Fano over $\mathfrak T$, which is an open subset of $\mathfrak T$, hence we can replace $\mathfrak T$ by $\mathfrak T'$. We see that $(\mathfrak X_0,\mathfrak B_0)$ is log Fano over $\mathfrak T_0$, hence $(\mathfrak X_p,\mathfrak B_p)$ is globally F-regular over $\mathfrak T_p$ for almost all $p$ by \Cref{relgfr}. Now taking geometric fibers and we are done.
\end{proof}
Now let us come back to our problem on the boundedness of complements.
\begin{definition}
    Let $(X,B)$ be a lc pair, we say it is $(0,\epsilon)$-lc (resp. near $S\subset \lfloor B\rfloor$) if it is not klt and its second minimal log discrepancy (resp. near $S$) is larger or equal than $\epsilon$.
\end{definition}
\begin{lemma}
\label{relacomp3}
     Assume $(X,S+B)$ is a 3-dimensional dlt pair, $f:X\to Z$ is a contraction. Let $\epsilon>0$ be a real number and $I\subset[0,1]$ be a finite set of rational numbers, then there exists a natural number $n$ and a prime number $p_0$ depending only on $\epsilon$ and $I$, such that suppose:
     \begin{enumerate}
        \item $\chara k=p>p_0$,
         \item $(X,S+B)$ is weak Fano over $Z$,
         \item $S+B\in \Phi(I)$,
         \item $\dim Z\leq 1$,
         \item $S$ is an irreducible reduced divisor, moreover $S$ is vertical over $Z$ if $Z$ is a curve, and
         \item $(X,S+B)$ is $(0,\epsilon)$-lc near $S$ and plt near $S$, moreover $\lfloor S+B\rfloor= S$ near $S$,
     \end{enumerate}  
     Then there exists an $n$-complement $(X,S+B^+)$ of $(X,S+B)$ over $z=f(S)$ with $B^+\geq B$.
\end{lemma}
\begin{proof}
    By \cite[2.18]{threedimmmp}, we see the restriction map $S^0f_*(X,\Psi+\Lambda;L)\to S^0(S,\Psi_S+\Lambda_S; L_S)$ is a surjection. Since $\Lambda_S=0$ and $\Psi_S=B_S$ and $(S,B_S)$ is $\epsilon$-lc weak del pezzo surface with $B_S\in J$, by \Cref{gengfr}, $(S,B_S)$ is globally F-regular. In particular $S^0(S,\Psi_S+\Lambda_S; L_S)=H^0(S,L_S)$. Enlarging $m$ so that $(S,B_S)$ admits an $m$-complement $(S, B_S+G_S)$ where $mG_S\sim L_S$. By surjectivity, we see that there is some $\Delta\in |L|$ such that $\Delta|_S=mG_S$. Define $G:=\frac{1}{m}\Delta$, we claim that $(X,S+B+G)$ is an $m$-complement of $(X,S+B)$. Indeed we see that $(X,S+B+G)$ is lc near $S$ by inversion of adjunction.  Suppose it is not lc near $z$, then we see that the lc center does not intersect $S$, hence for sufficiently small $\delta\geq 0$, we see that $(X,S+B+\delta G)$ is lc. So let $\lambda=\text{lct}(X,S+B,G)$, then $(X,S+B+\lambda G)$ is lc with two disjoint lc center mapped onto $z$ and $-(K_X+S+B+\lambda G)$ is nef and big over a neighbourhood of $z$, which contradicts the connectedness principle and we are done.
\end{proof}
Despite that we have not fully resolved with the case when $(X,S+B)$ is plt, we shall use the same method above to resolve the non-plt case first.
\begin{lemma}
\label{relacomp4}
     Assume $(X,S+B)$ is a 3-dimensional dlt pair, $f:X\to Z$ is a contraction. Let $I\subset[0,1]$ be a finite set of rational numbers, then there exists a natural number $n$ and a prime number $p_0$ depending only on $I$ such that suppose:
     \begin{enumerate}
         \item $\chara k=p>p_0$,
         \item $(X,S+B)$ is weak Fano over $Z$,
         \item $S+B\in \Phi(I)$,
         \item $\dim Z\leq 1$,
         \item near $S$, a lc center $T\subset \lfloor S +B\rfloor=S$ is an irreducible reduced divisor, moreover $T$ is vertical if $Z$ is a curve, and
         \item $(X,S+B)$ is not plt near $S$.
     \end{enumerate}
     Then there exists an $n$-complement $(X,S+B^+)$ of $(X,S+B)$ over $z=f(T)$ with $B^+\geq B$.
\end{lemma}
\begin{proof}
    According to the above proof, replacing $T$ with its normalization, it suffices to prove that $(T,\Phi_T+\Lambda_T)$ is globally F-split. However, this follows by global F-adjunction applied to a non-klt center of $(T,\Phi_T+\Lambda_T)$ immediately since $(T,\Phi_T+\Lambda_T)$ is a non-klt del Pezzo type log surface pair by divisorial adjunction, cf. \cite[Subsection 2.2]{CTW16}.
\end{proof}

\begin{lemma}
\label{relacomp5}
     Assume $(X,S+B)$ is a 3-dimensional dlt pair, $f:X\to Z$ is a contraction. Let  $I\subset[0,1]$ be a finite set of rational numbers, then there exists a natural number $n$, a small real number $\epsilon>0$ and a prime number $p_0$ depending only on $I$ such that suppose:
     \begin{enumerate}
         \item $\chara k=p>p_0$,
         \item $(X,S+B)$ is weak Fano over $Z$,
         \item $S+B\in \Phi(I)$,
         \item $\dim Z= 1$,
         \item $S$ is an irreducible reduced vertical divisor over $Z$,
         \item $(X,S+B)$ is plt near $S$, moreover $\lfloor S+B\rfloor= S$ near $S$, and
         \item $(X,S+B)$ is not $(0,\epsilon)$-lc near $S$.
     \end{enumerate}
      Then there exists an $n$-complement $(X,S+B^+)$ of $(X,S+B)$ over $z=f(S)$ with $B^+\geq B$.
\end{lemma}
\begin{proof}
    We may assume that $(X,S+B)$ has a place whose log discrepancy is in $(0,\epsilon)$ near $S$, say $D$. Let $X'\to X$ be a birational contraction which extracts $D$, we see $f^*(K_X+S+B)=K_{X'}+S'+B'+bD'$ with $b>1-\epsilon$ and $D$ intersects $S$. By ACC of lct, we see $\lct(X',S'+B', D')$ belongs to an ACC set, hence $(X',B'+S'+D')$ is lc so we replace $X$ with $X'$. We may assume $(X,S+B+D)$ is dlt by taking a dlt $\mathbb Q$-factorialization. Now we run $-(K_X+B+S+D)$-MMP$/Z$ on $X$, we see that $D$ is not contracted since the MMP is $D$-positive. Now we claim that $S$ will not be contracted under this MMP if $\epsilon$ is small enough. In fact, suppose at some step $g:X_i\to X_{i+1}$, $S$ is contracted. We see $(K_{X_i}+S_i+B_i+D_i)-g^*(K_{X_{i+1}}+S_{i+1}+B_{i+1}+D_{i+1})$ is $g$-ample, hence $g^*(K_{X_{i+1}}+S_{i+1}+B_{i+1}+D_{i+1})> K_{X_i}+S_i+B_i+D_i$, which is not lc and we get a contradiction to ACC of lct.

    Therefore, we may assume that in the outcome $(X',S'+B'+D')$ of $-(K_X+B+S+D)$-MMP, neither $S$ nor $D$ is contracted. $(X', S'+B'+D')$ is not a Mori fiber space$/Z$ when $\epsilon$ is small enough by global ACC for for Fano type fibrations \cite[5.1]{waldron3mfsimperf}. So $-(K_{X'}+S'+B'+D')$ is nef$/Z$, we only need to deal with the outcome $(X',S'+D'+B')$. Finally, consider the semi-ample fibration $(X',S'+D'+B')\to V/Z$, if $V$ is a curve, then $-(K_{X'}+S'+D'+B')$ is trivial over $Z$ as $V\simeq Z$, and this is done by \Cref{relacomp1}; if $V$ is a surface, then this is done by \Cref{relacomp2}; if $X\to V$ is birational, then $-(K_{X'}+S'+D'+B')$ is nef and big$/Z$, and this is done by \Cref{relacomp4}.
\end{proof}
Using the same method as in \Cref{relacomp1} to construct a vertical lc center $S$, together with the results in \Cref{relacomp3}, \Cref{relacomp4} and \Cref{relacomp5}, we obtain the following result:
\begin{theorem}
\label{relacomp6}
    Assume $(X,B)$ is a 3-dimensional projective lc pair, $f:X\to Z$ is a contraction with $\dim Z=1$, let $R\subset [0,1]$ be a finite set of rational numbers, then there is some natural number $n=n(R)\in\mathbb N$ and a prime number $p_0=p_0(R)$ such that suppose:
    \begin{enumerate}
        \item $\chara k>p_0$,
        \item $B\in\Phi(R)$,
        \item $X$ is of Fano type$/Z$,
        \item $-(K_X+B)$ is nef and big$/Z$.
    \end{enumerate}
    Then, for any $z\in Z$, there is an $n$-complement $K_X+B^+$ of $K_X+B$ over $z$ with $B^+\geq B$.
\end{theorem}
Combining \Cref{relacomp1}, \Cref{relacomp2} and \Cref{relacomp6}, we finally obtain the boundedness of relative complements for 3-dimensional Fano type fibrations in large characteristic:
\begin{theorem}\label{recomp}
    Assume $(X,B)$ is a 3-dimensional projective lc pair, $f:X\to Z$ is a contraction with $3>\dim Z>0$, let $R\subset [0,1]$ be a finite set of rational numbers, then there is some natural number $n=n(R)\in\mathbb N$ a prime number $p_0=p_0(R)$ such that suppose:
    \begin{enumerate}
        \item $\chara k>p_0$,
        \item $B\in\Phi(R)$,
        \item $X$ is of Fano type$/Z$,
        \item $-(K_X+B)$ is nef$/Z$.
    \end{enumerate}
    Then, for any $z\in Z$, there is an $n$-complement $K_X+B^+$ of $K_X+B$ over $z$ with $B^+\geq B$.
\end{theorem}

\bibliographystyle{alpha}
\bibliography{cite}

@article{birkar2020singularitieslinearsystemsboundedness,
  title={Singularities of linear systems and boundedness of {F}ano
 varieties},
  author={Caucher Birkar},
  journal={Annals of Mathematics},
  year={2021},
  url={https://api.semanticscholar.org/CorpusID:119321649}
}

@article{Das_2016,
   title={On the adjunction formula for 3-folds in characteristic $p>5$},
   volume={284},
   ISSN={1432-1823},
   url={http://dx.doi.org/10.1007/s00209-016-1655-4},
   DOI={10.1007/s00209-016-1655-4},
   number={1–2},
   journal={Mathematische Zeitschrift},
   publisher={Springer Science and Business Media LLC},
   author={Das, Omprokash and Hacon, Christopher D.},
   year={2016},
   month=apr, pages={255–269} }

@ARTICLE{2021arXivpositivityofmoduli,
       author = {{Ambro}, Florin and {Cascini}, Paolo and {Shokurov}, Vyacheslav and {Spicer}, Calum},
        title = "{Positivity of the Moduli Part}",
      journal = {arXiv e-prints},
     keywords = {Mathematics - Algebraic Geometry, 14E30, 37F75},
         year = 2021,
        month = oct,
          eid = {arXiv:2111.00423},
        pages = {arXiv:2111.00423},
          doi = {10.48550/arXiv.2111.00423},
archivePrefix = {arXiv},
       eprint = {2111.00423},
 primaryClass = {math.AG},
       adsurl = {https://ui.adsabs.harvard.edu/abs/2021arXiv211100423A},
      adsnote = {Provided by the SAO/NASA Astrophysics Data System}
}

@article{birkar2019antipluricanonicalsystemsfanovarieties,
author = {Caucher Birkar},
title = {{Anti-pluricanonical systems on Fano varieties}},
volume = {190},
journal = {Annals of Mathematics},
number = {2},
publisher = {Department of Mathematics of Princeton University},
pages = {345 -- 463},
keywords = {complements, Fano varieties, Linear systems, minimal model program},
year = {2019},
doi = {10.4007/annals.2019.190.2.1},
URL = {https://doi.org/10.4007/annals.2019.190.2.1}
}

@ARTICLE{birkar2014existenceflipsminimalmodels,
       author = {{Birkar}, Caucher},
        title = "{Existence of flips and minimal models for 3-folds in char $p$}",
      journal = {arXiv e-prints},
     keywords = {Mathematics - Algebraic Geometry, 14E30},
         year = 2016,
        month = nov,
          eid = {arXiv:1311.3098},
        pages = {arXiv:1311.3098},
          doi = {10.48550/arXiv.1311.3098},
archivePrefix = {arXiv},
       eprint = {1311.3098},
 primaryClass = {math.AG},
       adsurl = {https://ui.adsabs.harvard.edu/abs/2013arXiv1311.3098B},
      adsnote = {Provided by the SAO/NASA Astrophysics Data System}
}

@article{birkar2014existencemorifibrespaces,
title = "{Existence of Mori fibre spaces for 3-folds in char $p$}",
keywords = "Base point freeness, Char p, Cone theorem, Minimal model program, Mori fibre spaces, Termination of flips",
author = "Caucher Birkar and Joe Waldron",
year = "2017",
month = jun,
day = "20",
doi = "10.1016/j.aim.2017.03.032",
language = "English (US)",
volume = "313",
pages = "62--101",
journal = "Advances in Mathematics",
issn = "0001-8708",
publisher = "Academic Press Inc.",
}

@ARTICLE{benozzocanonicalbundleformula,
       author = {{Benozzo}, Marta},
        title = "{On the canonical bundle formula in positive characteristic}",
      journal = {arXiv e-prints},
     keywords = {Mathematics - Algebraic Geometry, 14E30, 14G17, 14D06},
         year = 2023,
        month = may,
          eid = {arXiv:2305.19841},
        pages = {arXiv:2305.19841},
          doi = {10.48550/arXiv.2305.19841},
archivePrefix = {arXiv},
       eprint = {2305.19841},
 primaryClass = {math.AG},
       adsurl = {https://ui.adsabs.harvard.edu/abs/2023arXiv230519841B},
      adsnote = {Provided by the SAO/NASA Astrophysics Data System}
}

@article{Witaszek2017OnTC,
  title={On the canonical bundle formula and log abundance in positive characteristic},
  author={Jakub Witaszek},
  journal={Mathematische Annalen},
  year={2017},
  volume={381},
  pages={1309 - 1344},
  url={https://api.semanticscholar.org/CorpusID:119172734}
}

@article{Das_2019,
   title={{On the Boundedness of Anti-Canonical Volumes of Singular Fano3-Folds in Characteristic $p> 5$}},
   volume={2021},
   ISSN={1687-0247},
   url={http://dx.doi.org/10.1093/imrn/rnz048},
   DOI={10.1093/imrn/rnz048},
   number={9},
   journal={International Mathematics Research Notices},
   publisher={Oxford University Press (OUP)},
   author={Das, Omprokash},
   year={2019},
   month=mar, pages={6848–6870} }

@ARTICLE{sato2024boundednessweakfanothreefolds,
       author = {{Sato}, Kenta},
        title = "{Boundedness of weak Fano threefolds with fixed Gorenstein index in positive characteristic}",
      journal = {arXiv e-prints},
     keywords = {Mathematics - Algebraic Geometry, 14J10, 14J45, 14J30},
         year = 2024,
        month = mar,
          eid = {arXiv:2403.02596},
        pages = {arXiv:2403.02596},
          doi = {10.48550/arXiv.2403.02596},
archivePrefix = {arXiv},
       eprint = {2403.02596},
 primaryClass = {math.AG},
       adsurl = {https://ui.adsabs.harvard.edu/abs/2024arXiv240302596S},
      adsnote = {Provided by the SAO/NASA Astrophysics Data System}
}

@article{boundednessandk2,
author = {Alexeev, Valery},
title = {{Boundedness and $K^2$ for log surfaces}},
journal = {International Journal of Mathematics},
volume = {05},
number = {06},
pages = {779-810},
year = {1994},
doi = {10.1142/S0129167X94000395},

URL = {https://doi.org/10.1142/S0129167X94000395},
eprint = {https://doi.org/10.1142/S0129167X94000395}
}

@article{Borisov1993SINGULARTF,
  title={{Singular toric Fano varieties}},
  author={Alexandr Borisov and Lev Borisov},
  journal={Sbornik Mathematics},
  year={1993},
  volume={75},
  pages={277-283},
  url={https://api.semanticscholar.org/CorpusID:122508330}
}

@inproceedings{Cutkosky2004ResolutionOS,
  title={{Resolution of Singularities}},
  author={Steven Dale Cutkosky},
  year={2004},
  url={https://api.semanticscholar.org/CorpusID:116147489}
}

@article{Cossart2008ResolutionOS,
  title={{Resolution of singularities of threefolds in positive characteristic. I.: Reduction to local uniformization on Artin–Schreier and purely inseparable coverings}},
  author={Vincent Cossart and Olivier Piltant},
  journal={Journal of Algebra},
  year={2008},
  volume={320},
  pages={1051-1082},
  url={https://api.semanticscholar.org/CorpusID:118699879}
}

@article{Cossart2009RESOLUTIONOS,
  title={{Resolution of singularities of threefolds in positive characteristic. II.}},
  author={Vincent Cossart and Olivier Piltant},
  journal={Journal of Algebra},
  year={2009},
  volume={321},
  pages={1836-1976},
  url={https://api.semanticscholar.org/CorpusID:119805367}
}

@article{Ambro_2005,
    title={{The moduli b-divisor of an lc-trivial fibration}},
    volume={141},
    DOI={10.1112/S0010437X04001071},
    number={2},
    journal={Compositio Mathematica},
    author={Ambro, Florin},
    year={2005},
    pages={385–403}}

@article{patakfalvi2018singularitiesgeneralfiberslmmp,
  title={{Singularities of general fibers and the LMMP}},
  author={Zsolt Patakfalvi and Joe Waldron},
  journal={American Journal of Mathematics},
  year={2017},
  volume={144},
  pages={505 - 540},
  url={https://api.semanticscholar.org/CorpusID:119158578}
}

@article{hacon2012birationalautomorphismsvarietiesgeneral,
  title={On the birational automorphisms of varieties of general type},
  author={Christopher D. Hacon and James McKernan and Chenyang Xu},
  journal={Annals of Mathematics},
  year={2013},
  volume={177},
  pages={1077-1111},
  url={https://api.semanticscholar.org/CorpusID:1362771}
}

@article{PMIHES_1967__32__5_0,
     author = {Grothendieck, Alexander},
     title = {{\'El\'ements de g\'eom\'etrie alg\'ebrique : {IV.} {\'Etude} locale des sch\'emas et des morphismes de sch\'emas, {Quatri\`eme} partie}},
     journal = {Publications Math\'ematiques de l'IH\'ES},
     pages = {5--361},
     publisher = {Institut des Hautes \'Etudes Scientifiques},
     volume = {32},
     year = {1967},
     zbl = {0153.22301},
     language = {fr},
     url = {http://www.numdam.org/item/PMIHES_1967__32__5_0/}
}

@article{bernasconi2024boundinggeometricallyintegraldel,
    title={{Bounding geometrically integral del Pezzo surfaces}},
    volume={12},
    DOI={10.1017/fms.2024.80},
    journal={Forum of Mathematics, Sigma},
    author={Bernasconi, Fabio and Martin, Gebhard},
    year={2024}, pages={e81}}

@article{sato2024generalhyperplanesectionslog,
    title={{General hyperplane sections of threefolds in positive characteristic}},
    volume={19},
    DOI={10.1017/S1474748018000166},
    number={2}, journal={Journal of the Institute of Mathematics of Jussieu},
    author={Sato, Kenta and Takagi, Shunsuke},
    year={2020},
    pages={647–661}}

@article{threedimmmp,
title = "On the three dimensional minimal model program in positive characteristic",
author = "Hacon, {Christopher D.} and Chenyang Xu",
year = "2015",
month = jul,
day = "1",
doi = "10.1090/s0894-0347-2014-00809-2",
language = "English (US)",
volume = "28",
pages = "711--744",
journal = "Journal of the American Mathematical Society",
issn = "0894-0347",
publisher = "American Mathematical Society",
number = "3",
}

@article{Kawamata1997SubadjunctionOL,
  title={Subadjunction of log canonical divisors, II},
  author={Yūjirō Kawamata},
  journal={American Journal of Mathematics},
  year={1998},
  volume={120},
  pages={893 - 899},
  url={https://api.semanticscholar.org/CorpusID:14189422}
}

@unknown{FilsemiampleModulipart,
author = {Filipazzi, Stefano and Spicer, Calum},
year = {2022},
month = {12},
pages = {},
title = {On semi-ampleness of the moduli part},
doi = {10.48550/arXiv.2212.03736}
}

@ARTICLE{JLX22,
       author = {{Jiao}, Junpeng and {Liu}, Jihao and {Xie}, Lingyao},
        title = "{On generalized lc pairs with $\mathrm{\textbf b}$-log abundant nef part}",
      journal = {arXiv e-prints},
     keywords = {Mathematics - Algebraic Geometry, 14E30, 14C20. 14E05, 14J17},
         year = 2022,
        month = feb,
          eid = {arXiv:2202.11256},
        pages = {arXiv:2202.11256},
          doi = {10.48550/arXiv.2202.11256},
archivePrefix = {arXiv},
       eprint = {2202.11256},
 primaryClass = {math.AG},
       adsurl = {https://ui.adsabs.harvard.edu/abs/2022arXiv220211256J},
      adsnote = {Provided by the SAO/NASA Astrophysics Data System}
}

@ARTICLE{CHLX23,
       author = {{Chen}, Guodu and {Han}, Jingjun and {Liu}, Jihao and {Xie}, Lingyao},
        title = "{Minimal model program for algebraically integrable foliations and generalized pairs}",
      journal = {arXiv e-prints},
     keywords = {Mathematics - Algebraic Geometry, Mathematics - Dynamical Systems, 14E30, 37F75},
         year = 2023,
        month = sep,
          eid = {arXiv:2309.15823},
        pages = {arXiv:2309.15823},
          doi = {10.48550/arXiv.2309.15823},
archivePrefix = {arXiv},
       eprint = {2309.15823},
 primaryClass = {math.AG},
       adsurl = {https://ui.adsabs.harvard.edu/abs/2023arXiv230915823C},
      adsnote = {Provided by the SAO/NASA Astrophysics Data System}
}

@ARTICLE{jxtcbf,
       author = {{Jiang}, Xintong},
        title = "{On the canonical bundle formula and effective birationality for Fano varieties in char $p>0$}",
      journal = {arXiv e-prints},
     keywords = {Mathematics - Algebraic Geometry},
         year = 2025,
        month = jan,
          eid = {arXiv:2501.12041},
        pages = {arXiv:2501.12041},
          doi = {10.48550/arXiv.2501.12041},
archivePrefix = {arXiv},
       eprint = {2501.12041},
 primaryClass = {math.AG},
       adsurl = {https://ui.adsabs.harvard.edu/abs/2025arXiv250112041J},
      adsnote = {Provided by the SAO/NASA Astrophysics Data System}
}

@ARTICLE{3foldflip,
       author = {{Shokurov}, V.~V.},
        title = "{3-FOLD Log Flips}",
      journal = {Izvestiya: Mathematics},
         year = 1993,
        month = feb,
       volume = {40},
       number = {1},
        pages = {95-202},
          doi = {10.1070/IM1993v040n01ABEH001862},
       adsurl = {https://ui.adsabs.harvard.edu/abs/1993IzMat..40...95S},
      adsnote = {Provided by the SAO/NASA Astrophysics Data System}
}

@article{Counterexamplevanishing,
author = {Xie, Qihong},
year = {2010},
month = {12},
pages = {3494-3506},
title = {Counterexamples of the Kawamata-Viehweg Vanishing on Ruled Surfaces in Positive Characteristic},
volume = {324},
journal = {Journal of Algebra},
doi = {10.1016/j.jalgebra.2010.09.009}
}

@ARTICLE{BernasconiKollarGRvanishing,
       author = {{Bernasconi}, Fabio and {Koll{\'a}r}, J{\'a}nos},
        title = "{Vanishing theorems for threefolds in characteristic $p>5$}",
      journal = {arXiv e-prints},
     keywords = {Mathematics - Algebraic Geometry},
         year = 2020,
        month = dec,
          eid = {arXiv:2012.08343},
        pages = {arXiv:2012.08343},
          doi = {10.48550/arXiv.2012.08343},
archivePrefix = {arXiv},
       eprint = {2012.08343},
 primaryClass = {math.AG},
       adsurl = {https://ui.adsabs.harvard.edu/abs/2020arXiv201208343B},
      adsnote = {Provided by the SAO/NASA Astrophysics Data System}
}

@ARTICLE{Bernasconiplvanishing,
       author = {{Bernasconi}, Fabio},
        title = "{A vanishing theorem for threefolds in characteristic $p>5$ and applications}",
      journal = {arXiv e-prints},
     keywords = {Mathematics - Algebraic Geometry},
         year = 2020,
        month = jul,
          eid = {arXiv:2007.01803},
        pages = {arXiv:2007.01803},
          doi = {10.48550/arXiv.2007.01803},
archivePrefix = {arXiv},
       eprint = {2007.01803},
 primaryClass = {math.AG},
       adsurl = {https://ui.adsabs.harvard.edu/abs/2020arXiv200701803B},
      adsnote = {Provided by the SAO/NASA Astrophysics Data System}
}

@ARTICLE{waldron3mfsimperf,
       author = {{Waldron}, Joe},
        title = "{Mori fibre spaces for $3$-folds over imperfect fields}",
      journal = {arXiv e-prints},
     keywords = {Mathematics - Algebraic Geometry, 14E30},
         year = 2023,
        month = mar,
          eid = {arXiv:2303.00615},
        pages = {arXiv:2303.00615},
          doi = {10.48550/arXiv.2303.00615},
archivePrefix = {arXiv},
       eprint = {2303.00615},
 primaryClass = {math.AG},
       adsurl = {https://ui.adsabs.harvard.edu/abs/2023arXiv230300615W},
      adsnote = {Provided by the SAO/NASA Astrophysics Data System}
}

@article{BertiniHyperplane,
author = {Yosikazu Nakai},
title = {{Note on the Intersection of an Algebraic Variety with the Generic Hyperplane}},
volume = {26},
journal = {Memoirs of the College of Science, University of Kyoto. Series A: Mathematics},
number = {2},
publisher = {Duke University Press},
pages = {185 -- 187},
year = {1950},
doi = {10.1215/kjm/1250777992},
URL = {https://doi.org/10.1215/kjm/1250777992}
}

@article{HMXboundednessofgeneraltype,
author = {Hacon, Christopher and McKernan, James and Xu, Chenyang},
year = {2014},
month = {12},
pages = {},
title = {Boundedness of moduli of varieties of general type},
volume = {20},
journal = {Journal of the European Mathematical Society},
doi = {10.4171/JEMS/778}
}

@article{birkarpolarisedvarieties,
author = {Birkar, Caucher},
year = {2023},
month = {02},
pages = {},
title = {Geometry of polarised varieties},
volume = {137},
journal = {Publications mathématiques de l'IHÉS},
doi = {10.1007/s10240-022-00136-w}
}

@ARTICLE{JJZboundedpolarizedLCYfib,
       author = {{Jiang}, Xiaowei and {Jiao}, Junpeng and {Zhu}, Minzhe},
        title = "{Boundedness of polarized log Calabi-Yau fibrations with bounded bases}",
      journal = {arXiv e-prints},
     keywords = {Algebraic Geometry, 14E30, 14J10, 14J40},
         year = 2025,
        month = apr,
          eid = {arXiv:2504.05243},
        pages = {arXiv:2504.05243},
          doi = {10.48550/arXiv.2504.05243},
archivePrefix = {arXiv},
       eprint = {2504.05243},
 primaryClass = {math.AG},
       adsurl = {https://ui.adsabs.harvard.edu/abs/2025arXiv250405243J},
      adsnote = {Provided by the SAO/NASA Astrophysics Data System}
}

@ARTICLE{birkarfanofib,
       author = {{Birkar}, Caucher},
        title = "{Singularities on Fano fibrations and beyond}",
      journal = {arXiv e-prints},
     keywords = {Mathematics - Algebraic Geometry},
         year = 2023,
        month = may,
          eid = {arXiv:2305.18770},
        pages = {arXiv:2305.18770},
          doi = {10.48550/arXiv.2305.18770},
archivePrefix = {arXiv},
       eprint = {2305.18770},
 primaryClass = {math.AG},
       adsurl = {https://ui.adsabs.harvard.edu/abs/2023arXiv230518770B},
      adsnote = {Provided by the SAO/NASA Astrophysics Data System}
}

@ARTICLE{Bernasconibasepointfreeklt,
       author = {{Bernasconi}, Fabio},
        title = "{On the base point free theorem for klt threefolds in large characteristic}",
      journal = {arXiv e-prints},
     keywords = {Mathematics - Algebraic Geometry},
         year = 2019,
        month = jul,
          eid = {arXiv:1907.10396},
        pages = {arXiv:1907.10396},
          doi = {10.48550/arXiv.1907.10396},
archivePrefix = {arXiv},
       eprint = {1907.10396},
 primaryClass = {math.AG},
       adsurl = {https://ui.adsabs.harvard.edu/abs/2019arXiv190710396B},
      adsnote = {Provided by the SAO/NASA Astrophysics Data System}
}

@ARTICLE{HW19,
       author = {{Hacon}, Christopher and {Witaszek}, Jakub},
        title = "{The Minimal Model Program for threefolds in characteristic five}",
      journal = {arXiv e-prints},
     keywords = {Mathematics - Algebraic Geometry, 14E30, 14J17, 13A35},
         year = 2019,
        month = nov,
          eid = {arXiv:1911.12895},
        pages = {arXiv:1911.12895},
          doi = {10.48550/arXiv.1911.12895},
archivePrefix = {arXiv},
       eprint = {1911.12895},
 primaryClass = {math.AG},
       adsurl = {https://ui.adsabs.harvard.edu/abs/2019arXiv191112895H},
      adsnote = {Provided by the SAO/NASA Astrophysics Data System}
}

@article{PMIHES_1961__8__5_0,
     author = {Grothendieck, Alexander},
     title = {\'El\'ements de g\'eom\'etrie alg\'ebrique : {II.} {\'Etude} globale \'el\'ementaire de quelques classes de morphismes},
     journal = {Publications Math\'ematiques de l'IH\'ES},
     pages = {5--222},
     publisher = {Institut des Hautes Etudes Scientifiques},
     volume = {8},
     year = {1961},
     zbl = {0118.36206},
     language = {fr},
     url = {https://www.numdam.org/item/PMIHES_1961__8__5_0/}
}

@ARTICLE{SS09,
       author = {{Schwede}, Karl E. and {Smith}, Karen E.},
        title = "{Globally $F$-regular and log Fano varieties}",
      journal = {arXiv e-prints},
     keywords = {Mathematics - Algebraic Geometry, Mathematics - Commutative Algebra, 14J45, 13A35, 14B05},
         year = 2009,
        month = may,
          eid = {arXiv:0905.0404},
        pages = {arXiv:0905.0404},
          doi = {10.48550/arXiv.0905.0404},
archivePrefix = {arXiv},
       eprint = {0905.0404},
 primaryClass = {math.AG},
       adsurl = {https://ui.adsabs.harvard.edu/abs/2009arXiv0905.0404S},
      adsnote = {Provided by the SAO/NASA Astrophysics Data System}
}

@ARTICLE{tak04,
       author = {{Takagi}, Shunsuke},
        title = "{An interpretation of multiplier ideals via tight closure}",
      journal = {arXiv Mathematics e-prints},
     keywords = {Algebraic Geometry, Commutative Algebra, 13A35, 14B05},
         year = 2001,
        month = nov,
          eid = {math/0111187},
        pages = {math/0111187},
          doi = {10.48550/arXiv.math/0111187},
archivePrefix = {arXiv},
       eprint = {math/0111187},
 primaryClass = {math.AG},
       adsurl = {https://ui.adsabs.harvard.edu/abs/2001math.....11187T},
      adsnote = {Provided by the SAO/NASA Astrophysics Data System}
}

@ARTICLE{CTW16,
       author = {{Cascini}, Paolo and {Tanaka}, Hiromu and {Witaszek}, Jakub},
        title = "{On log del Pezzo surfaces in large characteristic}",
      journal = {arXiv e-prints},
     keywords = {Mathematics - Algebraic Geometry, 14E30, 14F17, 13A35},
         year = 2016,
        month = jan,
          eid = {arXiv:1601.03583},
        pages = {arXiv:1601.03583},
          doi = {10.48550/arXiv.1601.03583},
archivePrefix = {arXiv},
       eprint = {1601.03583},
 primaryClass = {math.AG},
       adsurl = {https://ui.adsabs.harvard.edu/abs/2016arXiv160103583C},
      adsnote = {Provided by the SAO/NASA Astrophysics Data System}
}

@ARTICLE{YH17wittnadelvanishing,
       author = {{Nakamura}, Yusuke and {Tanaka}, Hiromu},
        title = "{A Witt Nadel vanishing theorem for threefolds}",
      journal = {arXiv e-prints},
     keywords = {Mathematics - Algebraic Geometry, Mathematics - Number Theory},
         year = 2017,
        month = dec,
          eid = {arXiv:1712.07358},
        pages = {arXiv:1712.07358},
          doi = {10.48550/arXiv.1712.07358},
archivePrefix = {arXiv},
       eprint = {1712.07358},
 primaryClass = {math.AG},
       adsurl = {https://ui.adsabs.harvard.edu/abs/2017arXiv171207358N},
      adsnote = {Provided by the SAO/NASA Astrophysics Data System}
}

@ARTICLE{KYH20lcmmp,
       author = {{Hashizume}, Kenta and {Nakamura}, Yusuke and {Tanaka}, Hiromu},
        title = "{Minimal model program for log canonical threefolds in positive characteristic}",
      journal = {arXiv e-prints},
     keywords = {Mathematics - Algebraic Geometry},
         year = 2017,
        month = nov,
          eid = {arXiv:1711.10706},
        pages = {arXiv:1711.10706},
          doi = {10.48550/arXiv.1711.10706},
archivePrefix = {arXiv},
       eprint = {1711.10706},
 primaryClass = {math.AG},
       adsurl = {https://ui.adsabs.harvard.edu/abs/2017arXiv171110706H},
      adsnote = {Provided by the SAO/NASA Astrophysics Data System}
}
\end{document}